\documentclass[reqno,10pt]{amsart}
\usepackage{amsmath,amsfonts,amsthm,amssymb,mathtools,enumerate}
\usepackage[margin=1in]{geometry}
\usepackage[english]{babel}
\usepackage[hidelinks]{hyperref}
\hypersetup{
  pdftitle={Measure-valued free-energy minimizers for trapped bosons with repulsive Coulomb interaction},
  pdfauthor={Gi-Chan Bae and Jinmyoung Seok},
  pdfsubject={Variational analysis of a semiclassical trapped Bose gas},
  pdfkeywords={Bose--Einstein condensation, Coulomb energy, measure-valued minimizers, obstacle problem}
}
\usepackage{xcolor}
\usepackage{todonotes}

\usepackage{graphicx}
\usepackage{subcaption}

\numberwithin{equation}{section}

\newtheorem{theorem}{Theorem}[section]
\newtheorem{lemma}[theorem]{Lemma}
\newtheorem{prop}[theorem]{Proposition}
\newtheorem{corollary}[theorem]{Corollary}
\newtheorem{remark}[theorem]{Remark}
\newtheorem{defi}[theorem]{Definition}

\newcommand{\R}{\mathbb{R}}
\def\e{\varepsilon }

\begin{document}

\title[Measure-valued free-energy minimizers for trapped bosons]
{Measure-valued free-energy minimizers for trapped bosons with repulsive Coulomb interaction}

\author{Gi-Chan Bae}
\address{School of Mathematical Sciences, Dalian University of Technology, Dalian 116024, P. R. China}
\email{gcbae02@gmail.com}
\author{Jinmyoung Seok}
\address{Department of Mathematics Education, Seoul National University, Seoul 08826, Korea}
\email{jmseok@snu.ac.kr}
\begin{abstract}
We study a free-energy minimization problem for a trapped semiclassical Bose gas with repulsive
Coulomb self-interaction. Because the bosonic entropy density has zero recession slope, the natural
relaxation is posed over nonnegative finite Radon measures on phase space, and a minimizer may have a
singular component. We interpret this component as condensation within the relaxed semiclassical model. 
We prove existence and uniqueness at every positive temperature, derive the Euler--Lagrange contact condition and an obstacle-type formula for the condensed density, 
and establish sharp phase boundaries under only the standing continuity assumptions on the trap. 
At fixed mass there is a unique finite positive critical temperature, with condensation precisely below it. At fixed temperature there is a sharp critical mass, possibly infinite,
separating normal and condensed minimizers. For harmonic traps we obtain a sharp confinement-strength dichotomy. 
We also prove an abstract conditional variational-stability statement for measure-valued curves that conserve mass and satisfy the free-energy inequality.
\end{abstract}

\maketitle

\section*{Introduction}
\addcontentsline{toc}{section}{Introduction}

Bose--Einstein condensation, originating in the work of Bose and Einstein
\cite{Bose1924,Einstein1925}, is one of the classical manifestations of quantum statistics. In the
ideal Bose gas, the Bose--Einstein distribution can accommodate only a finite amount of mass in the
excited states below the critical temperature; the excess mass then appears as a macroscopic
occupation of the ground state. In the relaxed semiclassical model considered here, this
concentration mechanism is represented by allowing the phase-space state to be a measure rather
than only a Lebesgue density. The regular part is interpreted as the thermal cloud and the singular
part as the condensed component of this model. 

The present paper studies this mechanism in a spatially inhomogeneous mean-field model. A bosonic
state is a nonnegative finite Radon measure $F$ on phase space
$\R^3_x\times\R^3_p$. Its spatial density is the marginal $\rho_F$, and the particles interact
through the repulsive Coulomb potential with Newtonian kernel
$V_F=|x|^{-1}*\rho_F$ while being confined by an external potential $\Phi$. At fixed mass $M$ and
temperature $T>0$, the free energy is
\[
 \mathcal F_T(F)=\iint_{\R^6}\left(\frac{|p|^2}{2}+\Phi(x)\right)dF
 +\frac1{8\pi}\int_{\R^3}|\nabla V_F|^2\,dx
 +T\iint_{\R^6}\bigl(F^a\log F^a-(1+F^a)\log(1+F^a)\bigr)\,dpdx.
\]
Here $F^a$ denotes the density of the absolutely continuous part of $F$. The singular part does not contribute directly to the entropy. This is not an additional modelling convention: it is the recession term associated with the convex integrand
\[
 j(s)=s\log s-(1+s)\log(1+s),\qquad j_\infty=\lim_{s\to\infty}\frac{j(s)}s=0.
\]
Thus the correct lower-semicontinuous extension of the entropy functional to measures is precisely the one used above; see \cite{BenAbdallahGambaToscani2011,DemengelTemam} and Lemma~\ref{lsc} for a rigorous justification.

The combination of an external trap and repulsive long-range interaction also has a clear physical motivation. The theory of dilute Bose gases in traps is by now classical; see, for instance, the
review of Dalfovo, Giorgini, Pitaevskii and Stringari
\cite{DalfovoGiorginiPitaevskiiStringari1999} and the positive-temperature analysis of trapped
dilute gases by Deuchert, Seiringer and Yngvason \cite{DeuchertSeiringerYngvason2018}. More
directly related to the Coulombic part of our model are numerical studies of confined charged
bosons. Gonzalez, Partoens, Matulis and Peeters \cite{GonzalezPartoensMatulisPeeters1999}
considered a finite number of charged bosons in a two-dimensional parabolic trap interacting
through the repulsive Coulomb potential $1/r$ and computed ground-state energies by Pad\'e
approximants and variational Monte Carlo. Baksmaty, Yannouleas and Landman
\cite{BaksmatyYannouleasLandman2007} studied rapidly rotating harmonically trapped bosons with
either contact or long-range Coulomb repulsion by exact diagonalization, observing localized
rotating boson-molecule structures. These works are not finite-temperature measure-valued
variational problems, but they illustrate the same confinement--repulsion competition that
underlies our self-consistent free energy.

Literal Coulomb repulsion between neutral ultracold atoms is not the standard experimental platform.
Long-range interacting Bose gases nevertheless play a central role in current experiments and
numerics. The chromium and erbium condensates
\cite{GriesmaierWernerHenslerStuhlerPfau2005,AikawaFrischMarkBaierRietzlerGrimmFerlaino2012}
opened the study of strongly dipolar quantum gases. In such systems the trap geometry and long-range
dipolar interaction can create roton modes, finite-wavelength instabilities, droplets, and spatially
ordered structures; see, for example,
\cite{KadauSchmittWenzelWinkMaierFerrierBarbutPfau2016,ChomazBaierPetterMarkWaechtlerSantosFerlaino2016,ChomazVanBijnenPetterFaraoniBaierBecherMarkWaechtlerSantosFerlaino2018}.
Rydberg-dressed condensates provide another route to effective nonlocal, approximately isotropic
repulsion \cite{HenkelNathPohl2010}, and recent molecular condensates extend the experimental range
of tunable dipolar quantum matter \cite{BigagliYuanZhangBulatovicKarmanStevensonWill2024}. Our
Coulomb model should be understood as a simplified semiclassical mean-field model in this broader
class of trapped Bose systems with nonlocal repulsion.

We next explain the connection with quantum mean-field Bose gas theory, but only at the level at which the analogy is mathematically justified. At the full many-body level, an $N$-body bosonic mixed state $\Gamma_N$ is an operator on the symmetric space $\mathfrak H^N_{\rm sym}$, and the positive-temperature quantum variational principle minimizes
\[
  \operatorname{Tr}_{\mathfrak H^N_{\rm sym}}(H_N\Gamma_N)
  +T\operatorname{Tr}_{\mathfrak H^N_{\rm sym}}(\Gamma_N\log\Gamma_N).
\]
The von Neumann entropy term itself has the same formal expression for bosons, fermions, and distinguishable particles; the bosonic character at this level is encoded in the state space and in the admissible class of density matrices. The scalar Bose entropy used in this paper appears only after one passes to the occupation-number description of bosonic quasi-free states. More precisely, if a gauge-invariant bosonic quasi-free state is parametrized by its one-particle density matrix $\gamma\ge0$, then its Fock-space von Neumann entropy is
\[
 S_{\rm qf}(\gamma)
 =\operatorname{Tr}\bigl((1+\gamma)\log(1+\gamma)-\gamma\log\gamma\bigr)
 =\operatorname{Tr}h(\gamma),
 \qquad h(s)=(1+s)\log(1+s)-s\log s .
\]
Thus $h$ is the occupation-number entropy of a bosonic mode. The entropy in the present paper should be understood as the semiclassical phase-space analogue of this quasi-free entropy: formally, if $\gamma$ has semiclassical symbol $f(x,p)$, then
\[
  \operatorname{Tr}h(\gamma)
  \approx (2\pi\hbar)^{-3}\iint h(f(x,p))\,dxdp .
\]
This is the precise sense in which our entropy is related to the positive-temperature quantum mean-field picture. 

A corresponding operator-level direct-Hartree free energy would have the schematic form
\[
 \operatorname{Tr}\bigl((-\tfrac12\Delta+\Phi)\gamma\bigr)
 +\frac12\iint_{\R^3\times\R^3}\frac{\rho_\gamma(x)\rho_\gamma(y)}{|x-y|}\,dxdy
 -T\operatorname{Tr}h(\gamma),
 \qquad \operatorname{Tr}\gamma=M,
\]
with $\rho_\gamma(x)=\gamma(x,x)$. Its formal Euler--Lagrange equation is
\[
 \gamma=\frac1{\exp((-\tfrac12\Delta+\Phi+V_\gamma-\mu)/T)-1},
\]
and the semiclassical symbol of this equation is precisely the Bose--Einstein formula appearing below for $F_0^a$. We do not derive our model from this operator problem and we do not include exchange or pairing terms. Rather, our functional should be viewed as a semiclassical, direct-Hartree, phase-space free-energy model with repulsive Coulomb self-interaction.

This distinction is important when comparing with the rigorous quantum literature. At zero temperature, Hartree and Gross--Pitaevskii functionals arise as effective variational models for large bosonic many-body systems; see the monograph of Lieb, Seiringer, Solovej and Yngvason \cite{LiebSeiringerSolovejYngvason2005} and the derivation of Hartree theory for generic mean-field Bose systems by Lewin, Nam and Rougerie \cite{LewinNamRougerie2014Hartree}. At positive temperature, nonlinear Gibbs measures have been derived from bosonic grand-canonical quantum Gibbs states in mean-field or high-temperature regimes by Lewin, Nam and Rougerie \cite{LewinNamRougerie2015Gibbs,LewinNamRougerie2020ClassicalField}, and by Fr\"ohlich, Knowles, Schlein and Sohinger \cite{FrohlichKnowlesSchleinSohinger2016}. These works concern quantum Gibbs states and nonlinear field measures, not the measure-valued phase-space minimization problem considered here. Closer in entropy structure are quasi-free and Bogoliubov free-energy theories for positive-temperature Bose gases, where bosonic occupation-number entropies and condensate/thermal decompositions appear explicitly; see, for example, Napi\'orkowski, Reuvers and Solovej \cite{NapiorkowskiReuversSolovej2015BogoliubovI,NapiorkowskiReuversSolovej2015BogoliubovII} and Bach, Breteaux, Chen, Fr\"ohlich and Sigal \cite{BachBreteauxChenFrohlichSigal2016}. Our setting is different because it is trapped, Coulombic, semiclassical, and relaxed to Radon measures, but the entropy mechanism is the same one-particle bosonic mechanism.

A particularly relevant comparison is the work of Deuchert and Seiringer
\cite{DeuchertSeiringer2021}. They derive a semiclassical Bose free-energy
functional from positive-temperature Hartree theory for a harmonically trapped
gas with regular, weak repulsive interactions, and prove that operator-level
Bose--Einstein condensation is equivalent, in their regime, to condensation in
the semiclassical minimizer. In that semiclassical functional the condensate is
represented by a scalar mass at the trap center. By contrast, our admissible
class allows an arbitrary singular phase-space measure, so its spatial support
is selected by the Coulomb contact condition; moreover, the Coulomb kernel lies
outside the regularity class treated in \cite{DeuchertSeiringer2021}. Deuchert
and Seiringer also compare their result with the earlier finite-temperature
Thomas--Fermi analysis of Coulomb-interacting bosons by Narnhofer and Thirring
\cite{NarnhoferThirring1981}, which establishes leading thermodynamic
asymptotics and equivalence of ensembles. Our purpose is different and
complementary: we take the limiting measure-valued free energy as the
variational model and analyze its minimizer directly, obtaining uniqueness,
quasi-everywhere Euler--Lagrange conditions, obstacle regularity, and mass- and
temperature-dependent sharp condensation transitions.

From this perspective the decomposition
\[
 F_0=F_0^a\,dxdp+F_0^s
\]
plays the role of a semiclassical thermal-cloud/condensate decomposition. The absolutely continuous part is the Bose thermal cloud in the self-consistent potential $\Phi+V_{F_0}$, whereas the singular part is the semiclassical counterpart of a macroscopically occupied mode. Because the entropy $h$ grows only sublinearly at large occupation numbers, a singular condensate can carry mass without contributing directly to the entropy in the lower-semicontinuous relaxation. A distinctive feature of the present model is that the condensate is not described by a Gross--Pitaevskii wave function. Instead its spatial distribution is selected by the self-consistent contact condition
\[
 \Phi+V_{F_0}=-c
\]
and, under local $C^2$ regularity of the trap, by an obstacle-type formula for the condensed density. Thus the paper should be understood as a semiclassical Bose free-energy theory in which the usual operator or wave-function description of the condensate is replaced by a measure-valued contact problem for the effective Coulomb potential.

A central mathematical difficulty is that the Bose entropy is not superlinear. For Maxwell--Boltzmann or Fermi--Dirac statistics, entropy often provides strong compactness in $L^1$; for bosons, the sublinear growth at infinity permits concentration. The rigorous treatment of such concentration phenomena was already emphasized in the spatially homogeneous theory. In particular, Ben Abdallah, Gamba and Toscani \cite{BenAbdallahGambaToscani2011} studied minimization problems for sublinear convex entropy functionals arising from nonlinear Fokker--Planck equations with superlinear drift. Their main Bose--Einstein example is generated by the drift $D(f)=f(1+f)$ and leads to the homogeneous Bose--Einstein equilibrium
\[
 f_{\lambda,\theta}(v)=\frac1{\exp(|v|^2/(2\theta)+\lambda)-1}.
\]
The key point in their analysis is that the entropy functional is not coercive enough to prevent weak-* concentration in the space of measures. They therefore minimize the lower-semicontinuous extension of the entropy over nonnegative momentum measures. Under a mass constraint, the family of regular equilibria has a maximal mass $M(0)$; if the prescribed mass exceeds this value, the unique minimizer is the critical regular Bose distribution plus the excess mass concentrated as a Dirac measure at $v=0$. Under simultaneous mass and energy constraints, they identify a scaling-invariant critical ratio separating the purely regular regime from the regime in which a condensate at zero velocity appears.

Their work is an important homogeneous precursor to the present paper. Our variational problem has
the same sublinear-entropic origin, but the spatially inhomogeneous and self-consistent setting
introduces new features. The phase-space measure now depends on both position and momentum, the
spatial marginal creates a Poisson field, and the external potential competes with the repulsive
interaction. Consequently, the singular part is still forced to occur at zero momentum, but its
spatial component is no longer prescribed a priori. Instead it is determined by a contact condition
for the effective potential and, under additional regularity of the trap, by an obstacle-type free
boundary problem.

The main contributions are therefore threefold. 
First, we construct the lower-semicontinuous
measure relaxation together with its self-consistent Coulomb energy and prove that it has a unique
minimizer. We then identify the singular component through a quasi-everywhere contact condition
and, for locally $C^2$ traps, through an obstacle problem that yields a density formula and local
optimal regularity. 
Second, we prove that the normal phase is monotone both in temperature and in mass. The
argument combines a grand-canonical obstacle formulation with a measure-valued Kato inequality and
requires no differentiability of the trap beyond the standing continuity assumption. Consequently,
at every fixed mass there is a unique sharp critical temperature, while at every fixed temperature
there is a sharp critical mass, possibly infinite. Third, for harmonic traps we determine the exact
confinement-strength alternative between a finite critical mass and absence of condensation at every
mass.

The passage from densities to measures is based on the general theory of convex functions of measures developed by Demengel and Temam \cite{DemengelTemam}. In that framework, an integral functional with convex integrand is extended to Radon measures by adding the recession-function contribution on the singular part. 
For the Bose entropy considered here, $j_\infty=0$, so the recession term vanishes.
Thus the singular component carries no direct entropy cost. This is precisely the mechanism that makes a measure-valued formulation unavoidable rather than merely convenient. At the same time, the Coulomb energy imposes potential-theoretic restrictions: finite-energy spatial measures do not charge sets of Newtonian capacity zero. For this reason quasi-continuous representatives and quasi-everywhere statements are built into the Euler--Lagrange formulation.

The first group of results establishes the variational foundation. We prove that the free-energy minimization problem admits a unique minimizer in the measure class. 
In the simpler case without repulsive interactions, whenever condensation occurs, the condensate is concentrated at the minima of the confining potential, i.e., $\operatorname{supp}F_0^s\subset\{(x,0):\Phi(x)=\Phi_{\min}\}$.
In contrast, the inclusion of repulsive self-interaction leads to a substantially richer spatial structure of the condensate. In this case, the minimizer still satisfies a Bose--Einstein formula on its absolutely continuous part,
\[
 F_0^a(x,p)=\frac1{\exp\left(\frac1T(\frac{|p|^2}{2}+\Phi(x)+\widetilde V_{F_0}(x)+c)\right)-1},
\]
where $\widetilde V_{F_0}$ is the quasi-continuous representative of the potential (see Definition~\ref{def-qe}). The singular part is concentrated at zero momentum and lies on the contact set
\[
 \Phi+\widetilde V_{F_0}=-c.
\]
Under the additional regularity assumption $\Phi\in C^2_{\rm loc}$, the contact problem becomes an
obstacle problem: $u_0=\Phi+V_{F_0}+c$ is locally $C^{1,1}$, and the condensed spatial measure is
the restriction to the contact set $\mathcal C=\{u_0=0\}$ of a continuous density. Moreover,
$\mathcal C$ has nonempty interior, and on this open contact region one has
\[
 \rho_{F_0}=\frac1{4\pi}\Delta\Phi.
\]
This gives a concrete spatial description of the condensate.
See Theorem~\ref{prop-EL-minimizer}. Determining the geometry of the
contact set itself, however, requires a further analysis of the contact
condition $u_0=0$. Although a general analysis is beyond the scope of the present work, for some standard radial confining potentials the geometry of the condensate can be characterized. In particular, for
$\Phi(x)=\frac{\omega^2}{2}|x|^2$, the condensate is supported on a ball
$\{|x|\leq R_*\}$, whereas for $\Phi(x)=|x|^4$, it is supported on a
spherical shell $\{R_1\leq |x|\leq R_2\}$. See
Figure~\ref{fig} and Lemmas~\ref{L.A1}--\ref{L.A2} for details.
  The shell-shaped condensate in Figure~\ref{fig}(B) arises from a different mechanism from experimentally realized shell-shaped Bose
gases based on engineered bubble traps \cite{Carollo-Aveline-Rhyno-2022} or Na--Rb interspecies repulsion \cite{Jia-Huang-Qiu-2022}: in our model, it emerges from long-range repulsive self-interaction within a single Bose gas.

\begin{figure}[htbp]
	\centering
	\begin{subfigure}[b]{0.49\textwidth}
		\centering
		\includegraphics[width=\textwidth]{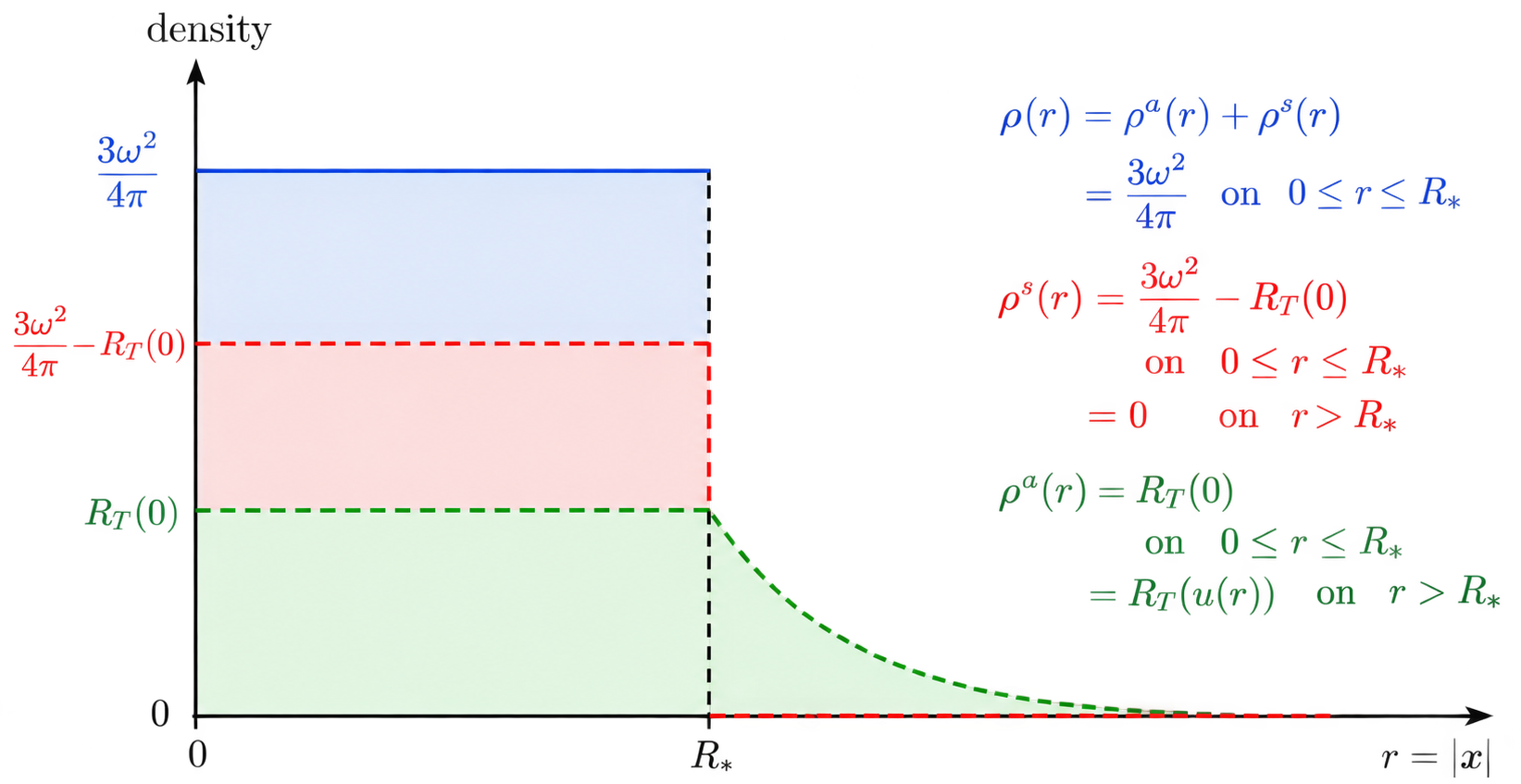}
		\caption{Ball-shaped condensation for $\Phi(x)=\frac{\omega^2}{2}|x|^2$.}
		\label{fig:harmonic}
	\end{subfigure}
	\hfill
	\begin{subfigure}[b]{0.49\textwidth}
		\centering
		\includegraphics[width=\textwidth]{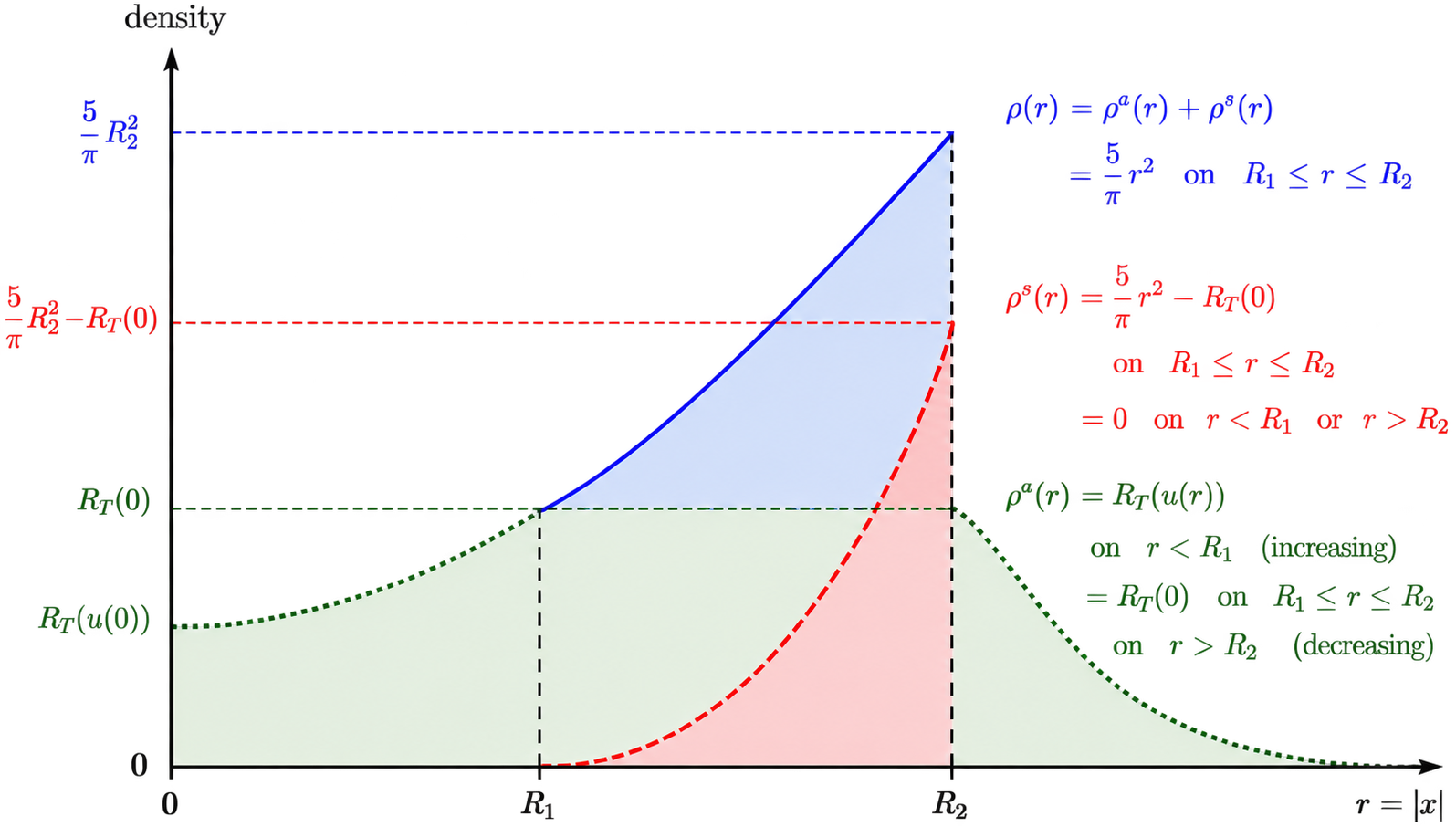}
		\caption{Shell-shaped condensation for $\Phi(x)=|x|^4$.}
		\label{fig:quartic}
	\end{subfigure}
	\caption{Radial density profiles of the thermal cloud and the condensate for two radial confining potentials. In $\R^3$, these profiles correspond to ball-shaped condensation for $\Phi(x)=\frac{\omega^2}{2}|x|^2$ and
		shell-shaped condensation for $\Phi(x)=|x|^4$. See
		Lemmas~\ref{L.A1} and~\ref{L.A2} for details.}
	\label{fig}
\end{figure}

The second group of results concerns transition phenomena. 
 In the absence of repulsive interactions, condensation is characterized by the explicit critical mass
\[
M_c(T)
=
\iint_{\R^6}
\frac{1}
{\exp\left(\frac1T\left(\frac{|p|^2}{2}
	+\Phi(x)-\Phi_{\min}\right)\right)-1}
\,dp\,dx.
\]
At fixed temperature, the system is non-condensed for $M\leq M_c(T)$
and condensed for $M>M_c(T)$; equivalently, at fixed mass there is a
sharp critical temperature separating the two phases. With repulsive
self-interaction the effective potential $\Phi+\widetilde V_{F_0}$ depends self-consistently on the
minimizer, so neither monotonicity is immediate and the critical parameters are implicit. We prove,
nevertheless, that normality at one temperature persists at every higher temperature and that
normality at one mass persists at every lower mass. The comparison is made at fixed chemical
potential through a grand-canonical obstacle problem. A measure-valued Kato inequality rules out
the creation of a new obstacle reaction on the zero set of the difference of two potentials.

At fixed mass, the preliminary high-temperature construction and low-temperature
energy comparison show that both phases occur. The monotonicity and a closedness argument then yield
a unique critical temperature $T_\sharp(M)\in(0,\infty)$: the minimizer is condensed for
$0<T<T_\sharp(M)$ and normal for $T\geq T_\sharp(M)$. Here, a normal state means a purely
absolutely continuous state. The low-temperature input is not based on normal-branch capacity; it
uses the strict gap between the zero-temperature energy minimum and the energy level of all normal
states.

At fixed temperature, the same comparison produces a sharp critical mass
$M_\sharp(T)\in(0,\infty]$. The minimizer is normal for $0<M\leq M_\sharp(T)$ and condensed for
$M>M_\sharp(T)$. Under strong confinement, either superquadratic confinement or a sufficiently
strong harmonic trap, $M_\sharp(T)$ is finite. For a harmonic potential
\[
 \Phi(x)=\Phi(0)+\frac{\omega^2}{2}|x|^2,
\]
there is a threshold governed by
\[
 3\omega^2 \gtrless 4\pi R_T(0),
 \qquad
 R_T(0)=\int_{\R^3}\frac1{\exp(|p|^2/(2T))-1}\,dp.
\]
If $3\omega^2>4\pi R_T(0)$, then $M_\sharp(T)<\infty$. If
$3\omega^2\le4\pi R_T(0)$, then $M_\sharp(T)=\infty$ and no condensation occurs for any $M>0$.

Finally, we prove an abstract conditional variational-stability statement. It applies to any
measure-valued curve for which mass is conserved and the free energy is nonincreasing on the time
interval under consideration. Three kinetic equations are displayed only as formal motivations:
Vlasov--Poisson, the spatially inhomogeneous bosonic Boltzmann--Poisson equation, and a bosonic
Vlasov--Fokker--Planck equation. No existence theorem for measure-valued solutions of those
equations is asserted here. For related homogeneous Bose--Einstein kinetic theory and convergence to
equilibrium, see for example \cite{CaiLu2018}. The proof is the variational Lyapunov argument: the
free-energy gap equals a relative free-energy functional at the unique minimizer, and
minimizing-sequence compactness converts energetic control into weak-* and potential stability.

The paper is organized as follows. Section~1 gives the precise setting and states the main results.
Section~2 proves compactness, lower semicontinuity, existence, uniqueness, and the Euler--Lagrange
characterization. Section~3 proves the preliminary low- and high-temperature bounds,
develops the grand-canonical obstacle comparison, and establishes the sharp temperature transition.
Section~4 identifies the sharp critical mass and proves the harmonic-trap dichotomy. Section~5
proves the abstract conditional stability result.

\section{Setting and main results}

This section fixes the variational framework and states the main results. We first introduce the
admissible measure class and the Coulomb energy, and then recall the potential-theoretic terminology
needed for finite-energy measures. We next define the entropy and free-energy functionals. The main
results stated at the end of this section are proved in the subsequent sections.

\subsection{The admissible measure class and Coulomb energy}
For a real-valued function $\Phi$, write
\[
 \Phi_+:=\max\{\Phi,0\},\qquad
 \Phi_-:=\max\{-\Phi,0\}.
\]
Let $\Phi \in C(\R^3)$ satisfy
\[
\lim_{|x|\to \infty}\Phi(x) = \infty, \qquad
\int_{\R^3}e^{- \frac12\sqrt{\Phi_+(x)}}\,dx < \infty.
\]
Since $\Phi$ is continuous and confining, it attains its minimum. We set
\[
 \Phi_{\min}:=\min_{\R^3}\Phi;
\]
in particular, $\Phi_-$ is bounded. We use
\[
 \dot H^1(\R^3):=\{u\in L^6(\R^3):\nabla u\in L^2(\R^3)\},
 \qquad \|u\|_{\dot H^1}:=\|\nabla u\|_{L^2}.
\]
We also write
\[
 \mathcal M(\R^6):=C_0(\R^6)^*
\]
for the space of finite signed Radon measures on phase space.

For a fixed mass $M>0$, we define the admissible class by
\[
\mathcal{A}_M \coloneqq
\left\{ F \in \mathcal{M}(\R^6) ~\middle|~
\begin{aligned}
&F \geq 0,\quad
\int_{\R^6}\left(\frac{|p|^2}{2}+\Phi_+(x)\right)dF<\infty,\\
&V_F \in \dot{H}^1(\R^3),\quad F(\R^6)=M
\end{aligned}\right\},
\]
and, whenever the mass is fixed, abbreviate $\mathcal A:=\mathcal A_M$. Here
\[
\rho_F(E)\coloneqq F(E\times\R^3),\qquad
V_F(x)=\int_{\R^3}\frac{1}{|x-y|}\,d\rho_F(y).
\]

\begin{prop}\label{prop-coulomb-energy}
Let $\rho$ be a nonnegative finite Radon measure in $\R^3$. Then
\[
V_\rho(x) = \int_{\R^3}\frac{1}{|x-y|}\,d\rho(y) \in \dot{H}^1(\R^3)
\]
if and only if
\[
\iint_{\R^3\times\R^3}\frac{1}{|x-y|}\,d\rho(x)d\rho(y)<\infty.
\]
In this case,
\[
\int_{\R^3}|\nabla V_\rho|^2\,dx
=4\pi\iint_{\R^3\times\R^3}\frac{1}{|x-y|}\,d\rho(x)d\rho(y).
\]
\end{prop}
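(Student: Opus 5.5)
The plan is to work in Fourier space and then pass to general measures by mollification. For a finite measure $\rho$, the Fourier transform $\hat\rho$ is bounded and continuous, and formally $\widehat{V_\rho}(\xi)=4\pi|\xi|^{-2}\hat\rho(\xi)$. The two identities to aim for are
\[
\int|\nabla V_\rho|^2\,dx=(2\pi)^{-3}\int 16\pi^2|\xi|^{-2}|\hat\rho|^2\,d\xi,
\qquad
\iint\frac{d\rho\,d\rho}{|x-y|}=(2\pi)^{-3}\int 4\pi|\xi|^{-2}|\hat\rho|^2\,d\xi .
\]
Comparing them gives the factor $4\pi$.

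First I would treat the smooth case. Let $\rho_\e=\rho*\eta_\e$ with $\eta_\e$ a standard radial mollifier. Then $\rho_\e\in L^1\cap L^\infty$, and $V_{\rho_\e}=V_\rho*\eta_\e$ is smooth and $O(1/|x|)$ at infinity, with gradient $O(1/|x|^2)$. Since $-\Delta V_{\rho_\e}=4\pi\rho_\e$ classically, integrating by parts on balls (the boundary terms vanish by this decay) gives
\[
\int|\nabla V_{\rho_\e}|^2\,dx=4\pi\int V_{\rho_\e}\rho_\e\,dx=4\pi\iint\frac{d\rho_\e\,d\rho_\e}{|x-y|}.
\]
So the identity holds for $\rho_\e$, with both sides possibly equal to $+\infty$. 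In fact both are finite, since $\rho_\e$ is bounded and integrable.

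Next comes the limit $\e\to0$. Here I would use that $\iint\frac{d\rho_\e\,d\rho_\e}{|x-y|}=\iint K_\e\,d\rho\,d\rho$ with $K_\e=|\cdot|^{-1}*\eta_\e*\eta_\e$. Since $|x|^{-1}$ is superharmonic and $\eta_\e*\eta_\e$ is radial, the mean-value inequality gives $K_\e\le|x|^{-1}$, with $K_\e\to|x|^{-1}$ pointwise off the diagonal. On the diagonal we have $K_\e(0)\to\infty=|0|^{-1}$. Fatou together with the domination then yields convergence of the Coulomb energies, including the case where the limit is $+\infty$.

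For the forward direction, assume $V_\rho\in\dot H^1$. Then $\nabla V_{\rho_\e}=(\nabla V_\rho)*\eta_\e\to\nabla V_\rho$ in $L^2$. Here $\nabla V_\rho$ is the distributional gradient, which agrees with the a.e. one because $V_\rho\in L^1_{\rm loc}$. So the left sides converge, and the energy is finite with the stated identity.

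For the converse, assume the energy is finite. Then $\nabla V_{\rho_\e}$ is bounded in $L^2$. Moreover $V_{\rho_\e}\to V_\rho$ in $L^1_{\rm loc}$, since $V_\rho\in L^1_{\rm loc}$ for finite $\rho$. Hence a subsequence of the gradients converges weakly to $\nabla V_\rho$, and $\nabla V_\rho\in L^2$. The same monotone argument, applied to the differences $\rho_\e-\rho_\delta$ (whose mutual energy is $\iint(K_\e-2K_{\e\delta}+K_\delta)\,d\rho\,d\rho\to0$ by dominated convergence), shows the gradients are Cauchy. The norm therefore converges and the identity holds.

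It remains to show $V_\rho\in L^6$. For this I would use Sobolev on $\dot H^1$-type functions vanishing at infinity. Concretely, $V_{\rho_\e}$ is smooth and tends to $0$ at infinity, so $\|V_{\rho_\e}\|_{L^6}\le C\|\nabla V_{\rho_\e}\|_{L^2}$. Since $V_{\rho_\e}\to V_\rho$ a.e. (along a subsequence, from the $L^1_{\rm loc}$ convergence), Fatou gives $V_\rho\in L^6$.

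The main obstacle is this limit passage. The pointwise bound $K_\e\le|x-y|^{-1}$ via superharmonicity is the key step, together with the identification of the weak limit with $\nabla V_\rho$. I expect everything else to be routine.
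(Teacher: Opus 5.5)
Your proof is correct in substance, but it takes a different route from the paper's.

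\textbf{How the two routes differ.} The paper works on the Fourier side. It takes the representation $I(\rho)=(2\pi)^{-3}\int 4\pi|\xi|^{-2}|\widehat\rho(\xi)|^2\,d\xi$ (both sides possibly infinite) as known for every finite positive measure. It gets the Cauchy property of the mollified potentials in $\dot H^1$ from Plancherel and dominated convergence, using $|\widehat\eta|\le1$. The converse comes from $|\xi|^2\widehat V_\rho=4\pi\widehat\rho$ and the Fourier characterization of $\dot H^1$.

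You stay in physical space. The superharmonic mean-value inequality gives $K_\varepsilon\le|x|^{-1}$, with equality for $|x|>2\varepsilon$ by Newton's theorem. Fatou and domination then give $I(\rho_\varepsilon)\to I(\rho)$ in $[0,\infty]$ directly on $\rho\otimes\rho$.

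\textbf{What each route buys.} The Fourier route is shorter once the spectral formula is granted, and it handles the signed differences $\rho_\varepsilon-\rho_\delta$ automatically. Yours is more elementary. It also supplies the step the paper only sketches (``regularization and truncation of the Newtonian kernel''), namely the passage from smooth densities to general measures, including measures with atoms and infinite energy. Your forward direction uses only $\nabla V_\rho\in L^2$, so it never has to identify $\widehat V_\rho$ near $\xi=0$. The Cauchy argument in your converse is unnecessary: once $\nabla V_\rho\in L^2$ is known, $(\nabla V_\rho)*\eta_\varepsilon\to\nabla V_\rho$ in $L^2$ gives the identity exactly as in your forward direction.

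\textbf{A justification to repair.} If $\rho$ is not compactly supported, $V_{\rho_\varepsilon}$ need not be $O(|x|^{-1})$, and $\nabla V_{\rho_\varepsilon}$ need not be $O(|x|^{-2})$. For example, take $\rho=\sum_k 2^{-k/2}\delta_{x_k}$ with $|x_k|=2^k$ and set $c_\eta=\int|y|^{-1}\eta(y)\,dy$. Then $|x_k|\,V_{\rho_\varepsilon}(x_k)\ge c_\eta\,2^{k/2}/\varepsilon$, which is unbounded.

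The smooth identity itself still holds. Since $\rho_\varepsilon\in L^{6/5}$, Hardy--Littlewood--Sobolev gives $V_{\rho_\varepsilon}\in L^6$ and $\nabla V_{\rho_\varepsilon}\in L^2$. H\"older on the annuli $B_{2R}\setminus B_R$ then shows that the boundary terms $\int_{\partial B_s}V_{\rho_\varepsilon}\,\partial_\nu V_{\rho_\varepsilon}\,dS$ tend to zero along some sequence $s_k\to\infty$. Alternatively, you can truncate $\rho$ to compact sets first. The same HLS bound gives $V_{\rho_\varepsilon}\in L^6$ directly in your final step.
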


\subsection{Capacity, quasi-continuity, and finite-energy measures}

The pairing of a Radon measure with a $\dot H^1$ function is not well-defined in general, since $\dot H^1$ functions are defined only up to sets of Lebesgue measure zero. For instance, $\phi$ and $\phi+\mathbf 1_{\{0\}}$ represent the same $\dot H^1$ function but yield different pairings with $\delta_0$. To make this pairing well-defined, we introduce the notion of quasi-continuity.

\begin{defi}[Quasi-continuity]\label{def-qe}
We use the following standard potential-theoretic terminology.
\begin{enumerate}[\rm(i)]
\item For a compact set $K\subset\R^3$, its Newtonian capacity is defined by
\[
 \operatorname{Cap}(K):=
 \inf\left\{\int_{\R^3}|\nabla\varphi|^2\,dx:
 \varphi\in C_c^\infty(\R^3),\ \varphi\ge0,\ \varphi\geq1
 \text{ in a neighborhood of }K\right\}.
\]
For an open set $O\subset\R^3$, we set
\[
 \operatorname{Cap}(O):=
 \sup\{\operatorname{Cap}(K):K\subset O,\ K\text{ compact}\}.
\]
For an arbitrary set $E\subset\R^3$, we use the outer regular definition
\[
 \operatorname{Cap}(E):=
 \inf\{\operatorname{Cap}(O):E\subset O,\ O\subset\R^3\text{ open}\}.
\]
For Borel sets this capacity is capacitable, namely
\[
 \operatorname{Cap}(E)
 =\sup\{\operatorname{Cap}(K):K\subset E,\ K\text{ compact}\}.
\]

\item A property is said to hold quasi-everywhere, abbreviated q.e., if it holds outside a set of
Newtonian capacity zero.

\item A function $u:\R^3\to[-\infty,\infty]$ is called quasi-continuous if, for every
$\varepsilon>0$, there exists an open set $G\subset\R^3$ with
$\operatorname{Cap}(G)<\varepsilon$ such that $u|_{\R^3\setminus G}$ is finite and continuous.

\item For $\phi\in\dot H^1(\R^3)$, a quasi-continuous representative of $\phi$ is a
quasi-continuous function $\widetilde\phi$ such that $\widetilde\phi=\phi$ Lebesgue-a.e.
\end{enumerate}
\end{defi}

It is standard that every $\phi\in\dot H^1(\R^3)$ has a quasi-continuous representative, unique up to a set of capacity zero; see, for instance, \cite[Section 6.1]{EvansGariepy2015} or \cite[Chapter 6]{AdamsHedberg1996}. We shall also use the elementary consequence of Sobolev's inequality that sets of zero Newtonian capacity have zero Lebesgue measure. Indeed, if $K\subset\R^3$ is compact and $\varphi\in C_c^\infty(\R^3)$ satisfies $\varphi\ge0$ and $\varphi\ge1$ in a neighborhood of $K$, then
\[
 |K|\le \int_K |\varphi|^6\,dx\le \|\varphi\|_{L^6}^6
 \le C\left(\int_{\R^3}|\nabla\varphi|^2\,dx\right)^3,
\]
and hence $|K|^{1/3}\le C\operatorname{Cap}(K)$. The assertion for arbitrary Borel sets follows from the outer regularity of capacity and the inner regularity of Lebesgue measure. Hereafter we always use the quasi-continuous representative when a function in $\dot H^1$ is integrated against a finite-energy measure.

\begin{prop}\label{prop-quasicont}
Let $F \in \mathcal{A}$. Then for every quasi-continuous $\phi\in\dot H^1(\R^3)$ the integral
$\int_{\R^3}\phi\,d\rho_F$ is well defined. Moreover,
\[
\int_{\R^3}\nabla V_F\cdot\nabla\phi\,dx
=4\pi\int_{\R^3}\phi\,d\rho_F.
\]
\end{prop}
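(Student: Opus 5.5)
The plan is to reduce the statement to the spatial measure $\rho:=\rho_F$, which is a nonnegative finite Radon measure. Since $F\in\mathcal A$ gives $V_F\in\dot H^1(\R^3)$, Proposition~\ref{prop-coulomb-energy} shows that $\rho$ has finite Coulomb energy,
\[
 I(\rho):=\iint\frac{d\rho(x)\,d\rho(y)}{|x-y|}<\infty .
\]
Everything then follows from two standard facts about finite-energy measures. First, $\rho$ does not charge sets of zero capacity. Second, for $\phi\in C_c^\infty(\R^3)$ we have the pairing bound
\[
 \Bigl|\int\phi\,d\rho\Bigr|\le C\,I(\rho)^{1/2}\|\nabla\phi\|_{L^2}.
\]

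\textbf{Step 1 (the identity for smooth test functions).} Take $\phi\in C_c^\infty(\R^3)$. Since $-\Delta\frac1{|x|}=4\pi\delta_0$ in $\mathcal D'$, Fubini and the local integrability of $|x-y|^{-1}$ give $-\Delta V_F=4\pi\rho$ in $\mathcal D'(\R^3)$. Because $\nabla V_F\in L^2$, integrating by parts yields
\[
 \int\nabla V_F\cdot\nabla\phi\,dx=4\pi\int\phi\,d\rho .
\]
Cauchy--Schwarz then gives $|\int\phi\,d\rho|\le(4\pi)^{-1}\|\nabla V_F\|_{L^2}\|\nabla\phi\|_{L^2}$. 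So $\phi\mapsto\int\phi\,d\rho$ is a bounded functional in the $\dot H^1$ norm on the dense subspace $C_c^\infty$.

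\textbf{Step 2 (null capacity sets and well-definedness).} Let $K$ be compact with $\operatorname{Cap}(K)=0$. Choose $\varphi_n\in C_c^\infty$ with $\varphi_n\ge1$ near $K$, $\varphi_n\ge0$ and $\|\nabla\varphi_n\|_2\to0$. Then Step 1 gives
\[
 \rho(K)\le\int\varphi_n\,d\rho\le C\|\nabla\varphi_n\|_2\to0 .
\]
By capacitability, $\rho(E)=0$ for every Borel set $E$ of zero capacity, and by outer regularity for arbitrary such sets. Now let $\phi\in\dot H^1$ be quasi-continuous and take $\phi_n\in C_c^\infty$ with $\phi_n\to\phi$ in $\dot H^1$. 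The standard capacitary estimate (via $L^6$, Sobolev and the capacitary weak-type inequality) provides a subsequence with $\phi_n\to\widetilde\phi$ q.e. Since two quasi-continuous representatives agree q.e., $\phi_n\to\phi$ q.e., and hence $\rho$-a.e. by the null-set property.

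\textbf{Step 3 (passing to the limit).} This is the main obstacle: upgrading $\rho$-a.e. convergence to $L^1(\rho)$ convergence and showing $\phi\in L^1(\rho)$. The plan is to first treat $\phi\ge0$ and truncate. Apply Step 1 to $|\phi_n-\phi_m|$, or rather to smooth mollified versions of it; $\dot H^1$ functions with compact support need an approximation argument here, which is routine. This gives
\[
 \int|\phi_n-\phi_m|\,d\rho\le C\|\nabla(\phi_n-\phi_m)\|_2 ,
\]
using $|\nabla|u||\le|\nabla u|$. Hence $(\phi_n)$ is Cauchy in $L^1(\rho)$, and its limit equals $\phi$ $\rho$-a.e. by Step 2. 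Therefore $\phi\in L^1(\rho)$, the integral $\int\phi\,d\rho$ is well defined independently of the representative (again by Step 2), and $\int\phi_n\,d\rho\to\int\phi\,d\rho$.

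Finally, $\int\nabla V_F\cdot\nabla\phi_n\,dx\to\int\nabla V_F\cdot\nabla\phi\,dx$ by $L^2$ convergence of the gradients. Passing to the limit in Step 1 gives the identity. If the extension of Step 1 to $|u|$ for non-smooth $u$ is delicate, the fallback is to take monotone approximations $\min(\phi_+,k)$, which are $\dot H^1$ and compactly approximable, and use monotone convergence for $\phi_\pm$ separately.
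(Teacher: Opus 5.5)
Your proposal is correct and follows essentially the same route as the paper: the smooth identity from Fubini and $-\Delta|x|^{-1}=4\pi\delta_0$, the pairing bound to show that $\rho_F$ does not charge capacity-zero sets, an $L^1(\rho_F)$ Cauchy estimate for $\phi_n-\phi_m$ obtained by approximating $|\psi|$ with nonnegative smooth functions, and identification of the limit with the quasi-continuous representative through a q.e.\ convergent subsequence. The only cosmetic difference is in the null-set step: you use capacitability together with inner regularity of $\rho_F$ on compact sets of zero capacity, whereas the paper proves the quantitative bound $\rho_F(K)^2\le (4\pi)^{-1}I(\rho_F)\operatorname{Cap}(K)$ and then invokes outer regularity of capacity.
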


\subsection{Entropy and free-energy functionals}
For $F\in\mathcal A$, write its Lebesgue decomposition as
\[
F=F^a\,dpdx+F^s,
\]
where
\[
F^a\in L^1(\R^6),\qquad F^s\in\mathcal M(\R^6),
\qquad F^a,F^s\geq0,\qquad F^s\perp dpdx.
\]
We use the convention
\[
h(s)\coloneqq (1+s)\log(1+s)-s\log s,
\qquad h(0)=0.
\]
Then $h'(s)=\log(1+1/s)$ and $h''(s)=-1/(s(1+s))<0$ for $s>0$. The bosonic entropy is
\[
H(F) \coloneqq \iint_{\R^6}\Big(F^a\log F^a-(1+F^a)\log(1+F^a)\Big)\,dpdx
=-\iint_{\R^6}h(F^a)\,dpdx.
\]
We now explain why the singular part does not appear in this definition. Let
$\lambda:=\mathcal L^6$ denote Lebesgue measure on phase space and set
\begin{align}\label{jdef}
 j(s):=-h(s)=s\log s-(1+s)\log(1+s),\qquad s\geq0.
\end{align}
The function $j$ is convex, and its recession function in every nonnegative direction is zero:
\[
 j^\infty(z):=\lim_{t\to\infty}\frac{j(tz)}{t}
 =z\lim_{s\to\infty}\frac{j(s)}s=0,\qquad z\geq0.
\]
For a convex integral functional initially defined on densities, the standard weak-* lower
semicontinuous relaxation to finite measures $F=F^a\lambda+F^s$ is governed by the recession
function and has the form
\[
 \overline H(F)
 =\int_{\R^6}j(F^a)\,d\lambda
 +\int_{\R^6}j^\infty\left(\frac{dF^s}{d|F^s|}\right)d|F^s|;
\]
see, for example, \cite{DemengelTemam}. Since $F^s$ is nonnegative,
$dF^s/d|F^s|=1$ $|F^s|$-a.e., and $j^\infty(1)=0$. Consequently,
\[
 \overline H(F)=\int_{\R^6}j(F^a)\,dpdx=H(F).
\]
Thus the absence of a singular entropy term is not an additional modeling convention: it is forced
by the recession-function formula for the lower-semicontinuous relaxation. Equivalently,
concentration into a singular measure has zero asymptotic entropy contribution per unit concentrated
mass. On compact phase-space sets this is the standard relaxation formula for convex functionals of
measures. Since the present problem is posed on the whole space and $j$ is not bounded below, the
global lower-semicontinuity statement also requires the moment and tightness estimates proved below.

Total energy and free energy: for $F \in \mathcal{A}$,
\[
\mathcal E(F)
= \iint_{\R^6}\left(\frac{|p|^2}{2}+\Phi(x) \right)\, dF
+\frac{1}{8\pi} \int_{\R^3}|\nabla V_F|^2\,dx,
\]
and
\[
\mathcal{F}_T(F)=\mathcal E(F)+T H(F),
\]
where $T>0$ is fixed. When there is no risk of confusion, we simply write $\mathcal F$ for
$\mathcal F_T$.

The associated minimization problem at fixed mass $M$ is
\begin{equation}\label{mp-boson}
\mathcal{F}_{\min}(T) \coloneqq \inf_{F \in \mathcal{A}}\mathcal{F}_T(F).
\end{equation}
The dependence of $\mathcal F_{\min}$ and of $\mathcal A$ on the fixed mass $M$ is suppressed in
the notation.

\subsection{Existence and Euler--Lagrange characterization}
The first main variational statement is the existence and uniqueness of the equilibrium state. The next theorem then identifies the equilibrium through its regular Bose distribution and its singular contact condition.

\begin{prop}\label{prop-existence-minimizer}
Let $T>0$. The minimization problem \eqref{mp-boson} admits a unique minimizer
$F_0\in\mathcal A$.
Moreover, if $\Phi$ is radial, then $F_0$ is radial, i.e. for every $R\in SO(3)$ and every
Borel set $A\times B\subset\R^3\times\R^3$,
\[
F_0(RA\times RB)=F_0(A\times B).
\]
\end{prop}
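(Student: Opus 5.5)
We will use the direct method in the weak-* topology of $\mathcal M(\R^6)$, followed by a strict-convexity argument for uniqueness and a symmetrization argument for radiality.

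\textbf{Step 1: $\mathcal F_{\min}(T)$ is finite.} For the upper bound, the admissible class is nonempty. A smooth compactly supported density of mass $M$ has finite kinetic, trap and Coulomb energy. For the lower bound, the term $\Phi$ is bounded below by $\Phi_{\min}$ and the Coulomb energy is nonnegative. The only difficulty is the entropy, since $-h$ is unbounded below. We control it by an entropy–energy inequality. For $\beta>0$, the Legendre-type bound $h(s)\le \beta e\,s+\log\bigl(1/(1-e^{-\beta e})\bigr)$ (the Bose–Einstein maximization with weight $e$) gives
\[
 T\iint h(F^a)\le \tfrac12\iint\Bigl(\tfrac{|p|^2}{2}+\Phi_+\Bigr)F^a+T\iint -\log\bigl(1-e^{-(|p|^2/2+\Phi_+)/(2T)}\bigr)\,dp\,dx .
\]
This is obtained by taking the weight equal to half the energy plus a compensating term. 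The last integral behaves like a Gaussian in $p$ times a quantity of order $e^{-\Phi_+/(2T)}$, with a singularity near zero energy that is integrable. To make it finite uniformly we replace $\Phi_+$ by a sublinear function of it, such as $\sqrt{\Phi_+}$. This is exactly where the hypothesis $\int e^{-\frac12\sqrt{\Phi_+}}<\infty$ is used: we use $\frac{|p|^2}{2}+\Phi_+\ge c(|p|+\sqrt{\Phi_+})$ outside a compact set and split the weight accordingly. The conclusion is
\[
 \mathcal F_T(F)\ge \tfrac12\iint\Bigl(\tfrac{|p|^2}{2}+\Phi_+\Bigr)dF-C(T,M),
\]
so $\mathcal F_{\min}(T)>-\infty$, and minimizing sequences have bounded moments and bounded $\|\nabla V_F\|_{L^2}$.

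\textbf{Step 2: compactness and lower semicontinuity.} Take a minimizing sequence $F_n$. The uniform moment bound together with the confinement of $\Phi$ gives tightness, so along a subsequence $F_n\rightharpoonup F_0$ narrowly. Tightness makes the mass pass to the limit, so $F_0(\R^6)=M$. The moment terms are lower semicontinuous since the integrands are continuous and bounded below. Narrow convergence gives $\rho_{F_n}\to\rho_{F_0}$ narrowly and $V_{F_n}\rightharpoonup V$ weakly in $\dot H^1$. Testing against $C_c^\infty$ functions via Proposition~\ref{prop-quasicont} identifies $V=V_{F_0}$. Weak lower semicontinuity of the norm then handles the Coulomb term, which also shows $F_0\in\mathcal A$. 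For the entropy we use the Demengel–Temam relaxation on balls $B_R$, where weak-* lower semicontinuity of $\int j(F^a)$ holds since $j^\infty=0$ and $j$ is convex. The tail outside $B_R$ is controlled uniformly in $n$ by the same Legendre bound from Step 1 restricted to $\{|p|^2/2+\Phi_+>R\}$, which is small uniformly because the moments are bounded. Hence $\mathcal F_T(F_0)\le\liminf_n\mathcal F_T(F_n)$, and $F_0$ is a minimizer. \emph{I expect this tail control of the non-coercive entropy on the whole space to be the main technical obstacle.} The tail bound must be strong enough to pass to the limit without any superlinear control of $F^a$.

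\textbf{Step 3: uniqueness.} The set $\mathcal A$ is convex, and the linear terms are affine. The Coulomb term $\frac1{8\pi}\|\nabla V_F\|^2$ is convex and strictly convex in $\rho_F$, while $H$ is convex and strictly convex in $F^a$ because $h''<0$. Suppose $F_1\ne F_2$ are both minimizers. Their midpoint is admissible and has free energy no larger than the common minimum, so equality must hold in each convexity inequality. This forces $F_1^a=F_2^a$ a.e. and $\rho_{F_1}=\rho_{F_2}$ (via $V_{F_1}=V_{F_2}$). It remains to rule out $F_1^s\ne F_2^s$, since the singular parts are only seen through the linear terms. Here we perturb: if the singular parts differ, we move singular mass to zero momentum at equal spatial density. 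This operation keeps $\rho$ and $F^a$ fixed and strictly lowers the kinetic energy unless both singular parts already sit on $\{p=0\}$. If both are of the form $\sigma_i\otimes\delta_0$, then $\sigma_i=\rho_{F_i}-\int F^a\,dp$ coincide, so $F_1=F_2$. Thus minimizers have singular part $\sigma\otimes\delta_{p=0}$, and uniqueness follows.

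\textbf{Step 4: radial symmetry.} For $R\in SO(3)$, the pushforward $F_0\circ(R\times R)^{-1}$ preserves every term of the functional when $\Phi$ is radial. The kinetic and entropy terms are invariant, the Lebesgue measure is invariant, and $V$ rotates along with $\rho$. So the rotated measure is again a minimizer, and by uniqueness it equals $F_0$.
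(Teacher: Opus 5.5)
Your overall architecture is the paper's, and Steps 3 and 4 match its argument exactly: coercivity from a Legendre-type bound on $h$, tightness, lower semicontinuity via Demengel--Temam on bounded sets plus a uniform tail estimate, uniqueness by strict convexity after reducing every singular part to $\rho_\sigma\otimes\delta_{p=0}$, and radiality from uniqueness. Step 1 is also correct.

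The gap is in the tail estimate of Step 2, the step you single out as the crux. Suppose you reuse the displayed Step 1 inequality, which puts the coefficient $\frac1{2T}$ in front of $E:=\frac{|p|^2}{2}+\Phi_+$. Then you obtain only
\[
 T\iint_{\{E>R\}}h(F_n^a)\,dpdx\le\frac12\iint_{\{E>R\}}E\,dF_n
 +T\iint_{\{E>R\}}-\log\bigl(1-e^{-E/(2T)}\bigr)\,dpdx .
\]
The first term on the right is bounded, but it is not small uniformly in $n$. A uniform bound on $\iint E\,dF_n$ does not make $E$ uniformly integrable: a piece of mass $1/n$ near energy $n$ contributes about $1$ for every $n>R$. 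So the claim that the tail is ``small uniformly because the moments are bounded'' does not follow from that bound.

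To close the step you need either a weight that is $o(E)$ at infinity or a coefficient you can send to zero. There are two ways to do this.
\begin{itemize}
\item The paper uses Lemma~\ref{lem-decay}, namely $h(s)\le(2+a)s+2e^{-a/2}$ with $a=|p|+\sqrt{\Phi_+}$. With $W=1+E$, on $\{W\ge R\}$ one has $2+|p|+\sqrt{\Phi_+}\le C\sqrt W\le CR^{-1/2}W$. Hence the tail is at most $CR^{-1/2}\sup_n\iint W\,dF_n$ plus the tail of the fixed integrable function $2e^{-(|p|+\sqrt{\Phi_+})/2}$. This is exactly where the hypothesis $\int e^{-\frac12\sqrt{\Phi_+}}dx<\infty$ is used.
\item Alternatively, keep $\beta$ free in your own bound $h(s)\le\beta Es-\log(1-e^{-\beta E})$. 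The tail is then at most $\beta\sup_n\iint E\,dF_n$ plus a term that tends to $0$ as $R\to\infty$ for fixed $\beta$; afterwards let $\beta\downarrow0$.
\end{itemize}

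Relatedly, your remark in Step 1 that a sublinear weight is needed to make the compensating integral finite is misplaced. With weight $E/(2T)$ that integral is already finite: the singularity at zero energy is only logarithmic, and $\int e^{-\Phi_+/(2T)}dx<\infty$. The sublinear weight, and the precise form of the hypothesis, matter only for the uniform tail.

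A smaller point: $j$ is unbounded below, so the paper applies Demengel--Temam to the truncations $j_m=\max\{j,-m\}$ on $\Omega_R=\{W<R\}$. It then removes the truncation using $(h(s)-m)_+\le\varepsilon s$ for $m$ large. You should include this step or an equivalent justification rather than invoking the relaxation for $j$ directly.
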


\begin{theorem}\label{prop-EL-minimizer}
Let $F_0$ be a minimizer of \eqref{mp-boson} with $T>0$, and let
$\widetilde V_{F_0}$ denote the quasi-continuous representative of $V_{F_0}$. Then there exists a
constant $c\in\R$ such that the following assertions hold.
\begin{enumerate}[\rm(i)]
\item If
\[
 u_0(x):=\Phi(x)+\widetilde V_{F_0}(x)+c,
\]
then
\[
 u_0\geq0 \qquad \text{q.e. in }\R^3,
\]
and the absolutely continuous part is given by
\[
F_0^a(x,p)=
\frac{1}{\exp\left(\frac1T\left(\frac{|p|^2}{2}+\Phi(x)+\widetilde V_{F_0}(x)+c\right)\right)-1}
\]
for a.e. $(x,p)\in\R^6$.

\item The singular part is concentrated at zero momentum. More precisely,
\[
F_0^s=\rho_0^s\otimes\delta_{p=0}
\]
for some finite nonnegative Radon measure $\rho_0^s$ on $\R^3$, and

\[
u_0=0,
\quad\text{equivalently,}\quad
\Phi+\widetilde V_{F_0}=-c
\qquad \rho_0^s\text{-a.e.}
\]

In particular, the condensed part is compactly supported.

\item Define
\[
\rho_0^a(x):=\int_{\R^3}F_0^a(x,p)\,dp.
\]
Assume in addition that $\Phi\in C^2_{\rm loc}(\R^3)$ and $F_0^s\neq0$. Then $u_0$ has a
$C^{1,1}_{\rm loc}$ representative, still denoted by $u_0$, and $\rho_0^a$ has a continuous
representative, which we use below. Set
\[
 \mathcal C:=\{x\in\R^3:u_0(x)=0\},
 \qquad
 g_0(x):=\frac{1}{4\pi}\Delta\Phi(x)-\rho_0^a(x).
\]
Then $g_0\in C(\R^3)$, and $\rho_0^s$ is absolutely continuous with respect to the Lebesgue measure, with Radon--Nikodym density
\[
 \frac{d\rho_0^s}{dx}(x)=g_0(x){\bf 1}_{\mathcal C}(x)
 \qquad\text{for a.e. }x\in\R^3.
\]
Moreover,
\[
 \operatorname{int}\mathcal C\neq\varnothing.
\]
Consequently,
\[
 \rho_{F_0}=\frac{1}{4\pi}\Delta\Phi
 \qquad\text{in }\mathcal D'(\operatorname{int}\mathcal C).
\]
\end{enumerate}
\end{theorem}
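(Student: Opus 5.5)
Throughout, $\varphi:=\Phi+\widetilde V_{F_0}$ is the effective potential, and the proof works with first variations of $\mathcal F_T$ at $F_0$ within $\mathcal A$.

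\textbf{Step 1: the Lagrange multiplier.} For an admissible perturbation $G$ (a finite signed measure with $F_0+tG\in\mathcal A$ for small $t\ge0$ and $G(\R^6)=0$), the Coulomb term is quadratic. Proposition~\ref{prop-quasicont} then gives its derivative as $\int\widetilde V_{F_0}\,d\rho_G$. The entropy derivative is $-T\int h'(F_0^a)\,G^a$, and the singular part of $G$ contributes no entropy because $j^\infty=0$.

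We first use perturbations supported where $F_0^a$ is positive, namely $G=(\psi_1-\psi_2)F_0^a$ with bounded $\psi_i$ of equal mass. These show that
\[
\frac{|p|^2}{2}+\varphi(x)-T\log\Bigl(1+\frac1{F_0^a}\Bigr)
\]
equals a constant $-c$ a.e. on $\{F_0^a>0\}$. Next we add small mass $\epsilon\eta$, with $\eta\ge0$ a bounded density, taken from a fixed region of the support. Since $h'(0^+)=+\infty$, this forces $F_0^a>0$ a.e., which yields the Bose--Einstein formula in (i). Positivity of that formula requires $|p|^2/2+\varphi+c>0$ a.e.

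To upgrade this to $u_0\ge0$ quasi-everywhere, we test with $F_0+t(\mu\otimes\delta_0-\text{mass})$, where $\mu$ ranges over finite-energy measures. Every such $\mu$ has Coulomb energy finite, so this perturbation is admissible. Minimality gives $\int u_0\,d\mu\ge0$ for all finite-energy $\mu\ge0$. Since a set of positive capacity carries a finite-energy measure (an equilibrium measure of a compact subset), $u_0\ge0$ q.e.

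\textbf{Step 2: the singular part.} Perturb by moving a fraction of $F_0^s$, that is, $G=-F_0^s+\Theta_\#F_0^s$, where $\Theta(x,p)=(x,0)$. This lowers kinetic energy and leaves $\rho$, hence the Coulomb and entropy terms, unchanged. It is therefore strictly better unless $F_0^s$ already lies on $\{p=0\}$, which gives $F_0^s=\rho_0^s\otimes\delta_0$.

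Removing mass from $\rho_0^s$ and redistributing it as an absolutely continuous bump gives a first variation $-\int u_0\,d\rho_0^s\ge0$, after using the equality on the absolutely continuous part. Together with $u_0\ge0$ q.e. and the fact that $\rho_0^s$ does not charge capacity-zero sets, this yields $u_0=0$ $\rho_0^s$-a.e.

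For compact support, note that $\widetilde V_{F_0}\ge0$, so $\{u_0=0\}\subset\{\Phi\le -c\}$. This set is bounded because $\Phi$ is confining.

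\textbf{Step 3: the obstacle problem (iii).} We have $-\Delta u_0=-\Delta\Phi+4\pi\rho_0^a+4\pi\rho_0^s$ in $\mathcal D'$, with $u_0\ge0$ and $u_0=0$ on $\operatorname{supp}\rho_0^s$. The Bose--Einstein formula gives
\[
\rho_0^a=(2\pi T)^{3/2}\,\mathrm{Li}_{3/2}\bigl(e^{-u_0/T}\bigr),
\]
and this is bounded because $u_0\ge0$, with a finite value at $u_0=0$. Hence $\Delta u_0=\Delta\Phi-4\pi\rho_0^a-4\pi\rho_0^s$, so $u_0$ solves locally an obstacle-type problem
\[
\Delta u_0=f(x,u_0)\mathbf 1_{\{u_0>0\}}, \qquad u_0\ge0,
\]
where $f=\Delta\Phi-4\pi\rho_0^a$. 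Standard obstacle-problem regularity should give $u_0\in C^{1,1}_{\rm loc}$. I expect this step, bootstrapping from $W^{2,p}$ with $f$ merely continuous, to be the main obstacle. I would first establish $u_0\in C^{0,\alpha}$ via superharmonicity of $u_0-\Phi$, which makes $\rho_0^a$ continuous and hence $f$ continuous. Caffarelli-type $C^{1,1}$ estimates then require $f$ to be Dini or Lipschitz; so I would use $\Phi\in C^2$ together with the Lipschitz dependence of $\rho_0^a$ on $u_0$.

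Given $C^{1,1}$ regularity, $D^2u_0=0$ a.e. on $\mathcal C$. Hence $\rho_0^s=g_0\mathbf 1_{\mathcal C}\,dx$, and $g_0$ is continuous.

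Finally, $\rho_0^s\ne0$ forces $|\mathcal C|>0$ and $g_0>0$ somewhere on $\mathcal C$. Near a Lebesgue density point where $g_0>0$, the nondegeneracy of the obstacle problem, $\Delta u_0<0$ nearby on $\{u_0>0\}$, gives interior points; this is the argument to double-check. On $\operatorname{int}\mathcal C$ we have $\Delta u_0=0$, hence $\rho_{F_0}=\Delta\Phi/4\pi$.
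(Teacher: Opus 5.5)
Your Steps 1 and 2 are sound and are essentially the paper's first-variation arguments. Your multiplicative perturbations $(\psi_1-\psi_2)F_0^a$, made before proving $F_0^a>0$, are a harmless reordering of the paper's two-sided variations on the sets $D_n$. The genuine gap is in Step 3, in both the initial-regularity step and the $C^{1,1}$ step.

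\textbf{Initial regularity.} The identity $\Delta u_0=f\,\mathbf 1_{\{u_0>0\}}$ cannot be your starting point. A priori $\rho_0^s$ is only a finite-energy measure carried by $\{u_0=0\}$, possibly singular with respect to Lebesgue measure. The statement $4\pi\rho_0^s=f\,\mathbf 1_{\{u_0=0\}}\,dx$ is exactly what (iii) asserts. What you actually have is a complementarity system. It must first be turned into a local variational inequality, by pairing $\rho_0^s$ with $\widetilde W-\widetilde V_{F_0}$ via Proposition~\ref{prop-quasicont} and using $u_0=0$ $\rho_0^s$-a.e. Your proposed first regularity step does not work: superharmonicity of $u_0-\Phi=V_{F_0}+c$ gives no continuity at all, let alone H\"older continuity. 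Newtonian potentials of finite-energy measures can be unbounded and discontinuous (cf.\ Remark~\ref{rem-qc-and-boundary}). The missing ingredient is the Lewy--Stampacchia inequality $0\le4\pi\rho_0^s\le(\Delta\Phi-4\pi R_T(u_0))_+$. It shows that $\rho_0^s$ has a locally bounded density, so $V_{F_0}\in W^{2,p}_{\rm loc}$ for all $p<\infty$ and $u_0$ is locally Lipschitz.

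\textbf{The $C^{1,1}$ step.} Your route fails as written, for two reasons. With $\Phi\in C^2_{\rm loc}$ only, $\Delta\Phi$ is merely continuous, not Dini. Also, $\rho_0^a=R_T(u_0)$ is not Lipschitz in $u_0$: $R_T'(s)=-(2\pi T)^{3/2}T^{-1}\sum_k k^{-1/2}e^{-ks/T}\to-\infty$ as $s\downarrow0$, so $R_T$ is only $\tfrac12$-H\"older, as in \eqref{RT-holder}. For a zero-obstacle problem whose right-hand side is merely continuous, $C^{1,1}$ can genuinely fail.

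The idea you need is to move $\Phi$ into the obstacle. Regard $V_{F_0}$ as the solution of the obstacle problem with obstacle $-\Phi-c\in C^{1,1}_{\rm loc}$ and source $4\pi R_T(u_0)\in C^{0,1/2}_{\rm loc}$. Subtract a local $C^{2,\alpha}$ solution $z$ of $-\Delta z=4\pi R_T(u_0)$. This leaves a zero-source obstacle problem with the $C^{1,1}$ obstacle $-\Phi-c-z$, and its solution is $C^{1,1}_{\rm loc}$ by the classical optimal-regularity theorem. Hence $u_0=V_{F_0}+\Phi+c\in C^{1,1}_{\rm loc}$.

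\textbf{The interior-point argument.} Your sign is reversed here. Near points of $\mathcal C$ where $g_0>0$, i.e.\ $\Delta\Phi>4\pi R_T(0)$, one has $\Delta u_0=\Delta\Phi-4\pi R_T(u_0)\ge\Delta\Phi-4\pi R_T(0)>0$ on $\{u_0>0\}$. It is this positivity that gives the quadratic nondegeneracy $\sup_{B_r(x_0)}u_0\ge\delta r^2/6$ at free-boundary points. That yields $|\partial\{u_0>0\}\cap\{\Delta\Phi>4\pi R_T(0)\}|=0$, so a Lebesgue density point of $\mathcal C\cap\{g_0>0\}$ lies in $\operatorname{int}\mathcal C$.
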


\begin{remark}\label{rem-qc-and-boundary}
The quasi-continuous formulation in Theorem~\ref{prop-EL-minimizer}(i)--(ii) is essential under
only the standing continuity assumptions on $\Phi$. Although the formula in (i) implies that the
regular spatial density is bounded by the finite number $R_T(0)$ (defined by the momentum integral in Theorem~\ref{thm-M}), the full potential contains the singular contribution
$V_{\rho_0^s}$. A finite-Coulomb-energy measure does not charge sets of Newtonian capacity zero, and
therefore has no atoms, but it may still be singular with respect to Lebesgue measure. Consequently
$V_{\rho_0^s}$ need not have a continuous representative, and $u_0$ should not be claimed to be a
continuous function at this level of generality. The stronger conclusions in (iii), including
$u_0\in C^{1,1}_{\rm loc}$ and the global density representation of $\rho_0^s$, use the additional assumption
$\Phi\in C^2_{\rm loc}(\R^3)$ and the associated obstacle regularity theory.
\end{remark}

\begin{remark}
Theorem~\ref{prop-EL-minimizer} determines the density of the
condensed part explicitly, but does not characterize the geometry of
the contact set $\mathcal C$. To determine $\mathcal C$, one needs to
further analyze the contact condition $u_0=0$. When the confining
potential is radial, the minimizer $F_0$ and the potential $V_{F_0}$
are also radial, reducing this analysis to a one-dimensional problem.
For the standard examples,
$\Phi(x)=\Phi(0)+\frac{\omega^2}{2}|x|^2$ yields a ball-shaped
condensate, whereas $\Phi(x)=|x|^4$ yields a shell-shaped condensate.
We refer to Lemmas~\ref{L.A1} and~\ref{L.A2} for details.
\end{remark}

\subsection{Sharp critical temperature at fixed mass}
We next state the sharp temperature transition. Its proof, given in
Section~\ref{sec-temp-transition}, combines preliminary low- and high-temperature estimates with a
comparison principle for the associated grand-canonical obstacle problem.

\begin{theorem}\label{thm-T}
Fix $M>0$. For each $T>0$, let $F_T$ be the minimizer of \eqref{mp-boson}, and denote by
$\rho_T^s$ the spatial projection of its singular part. Then there exists a unique critical
temperature
\[
 T_\sharp(M)\in(0,\infty)
\]
such that
\[
 \rho_T^s\neq0 \quad\text{for }0<T<T_\sharp(M),
 \qquad
 \rho_T^s=0 \quad\text{for }T\geq T_\sharp(M).
\]
\end{theorem}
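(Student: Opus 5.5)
The proof has four ingredients: existence of both phases, monotonicity of the normal phase in $T$, closedness of the normal set, and then definition of $T_\sharp(M)$. Write $N:=\{T>0:\rho_T^s=0\}$ for the set of normal temperatures. I would show that $N$ is nonempty, that it does not contain all small $T$, that it is upward closed, and that it is closed in $(0,\infty)$. Setting $T_\sharp(M):=\inf N$ then gives $N=[T_\sharp(M),\infty)$ with $T_\sharp(M)\in(0,\infty)$.

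\textbf{Both phases occur.} For high temperature I would construct, for $T$ large, a normal state satisfying the Euler--Lagrange system of Theorem~\ref{prop-EL-minimizer}. Concretely, I would solve the fixed-point problem
\[
 F^a=\bigl(e^{(|p|^2/2+\Phi+V_F+c)/T}-1\bigr)^{-1},\qquad c>-\Phi_{\min}-\inf V_F,
\]
with mass $M$. As $T\to\infty$ the Bose--Einstein distributions with $u\ge\delta>0$ carry mass of order $T^{3/2}\int (e^{(\Phi-\Phi_{\min})/T}-1)^{-1}$-type quantities, which diverge. Hence mass $M$ is attained with $c$ strictly above the contact level. By convexity this state is the unique minimizer, so $\rho_T^s=0$.

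For low temperature I would use the energy gap. Let $E_0:=\inf_{\mathcal A}\mathcal E$. Any normal state $F$ with $\mathcal F_T(F)$ close to $\mathcal F_{\min}(T)$ has entropy bounded below by $-C(1+T^{-?})$ via the moment bounds. I would prove $\inf\{\mathcal E(F):F\text{ normal},\ \mathcal F_T\text{-admissible}\}\ge E_0+\eta$ for some $\eta>0$ independent of small $T$. The reason is that normal states with bounded entropy cost have a density bounded by a Bose profile, which cannot concentrate at $p=0$. Since $\mathcal F_{\min}(T)\le E_0+o(1)$ (test with a smoothed zero-momentum minimizer of $\mathcal E$), minimizers are condensed for $T$ small.

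\textbf{Monotonicity (main obstacle).} I would show that if $T_1\in N$ and $T_2>T_1$, then $T_2\in N$. Following the paper's stated strategy, I would pass to the grand-canonical obstacle problem at fixed chemical potential $c$. For each $(T,c)$ the minimizer is characterized by $u=\Phi+V+c\ge0$ and $-\Delta u/4\pi=\rho^a_T(u)+\rho^s$, with $\rho^s\ge0$ supported on $\{u=0\}$. Here $\rho^a_T(u)=\int(e^{(|p|^2/2+u)/T}-1)^{-1}dp$ is increasing in $T$ and decreasing in $u$.

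The comparison step compares the $T_2$ solution with the normal solution at $T_1$ for the same $c$. Let $w=u_{T_2}-u_{T_1}$. On $\{w<0\}$, monotonicity of $\rho^a$ and the measure-valued Kato inequality $\Delta w^-\ge \mathbf 1_{\{w<0\}}\Delta(-w)$ as measures give that $w^-$ is subharmonic and decaying. Hence $w^-=0$, so $u_{T_2}\ge u_{T_1}>0$ away from the contact set. The Kato inequality is exactly what prevents a new obstacle reaction $\rho^s_{T_2}$ from appearing on $\{w=0\}$; this is the step where care with quasi-continuous representatives is needed.

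Finally I would relate mass to $c$. I would show that the mass $m(T,c)$ is continuous and strictly monotone in $c$, and increasing in $T$ at fixed $c$. Then the fixed-$M$ normal state at $T_2$ corresponds to a $c$ at least as large as at $T_1$, which is still strictly noncontact.

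\textbf{Closedness.} I would take $T_n\in N$ with $T_n\to T$. By uniqueness and the compactness of minimizing sequences from Section~2, $F_{T_n}\rightharpoonup F_T$ weak-*. The Euler--Lagrange constants $c_n$ stay bounded and converge. The limiting normal Bose profile with constant $c=\lim c_n$ already carries mass $M$ by monotone/dominated convergence, since the tail control uses $\int e^{-\sqrt{\Phi_+}/2}<\infty$. Hence the limit has no singular mass and $T\in N$. Together with the previous steps, this yields the sharp threshold $T_\sharp(M)$.
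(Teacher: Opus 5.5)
Your four-step architecture matches the paper's. Your temperature comparison is sound in outline: you show $V_{T_2}\ge V_{T_1}$ at fixed chemical potential, then use the zero-set Kato inequality to exclude a reaction on $\{w=0\}$. Your closedness argument is also sound in outline. The genuine gap is in the low-temperature step.

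\textbf{The low-temperature step.} As stated, the bound $\inf\{\mathcal E(F):F\text{ normal}\}\ge E_0+\eta$ is false. Mollifying a near-minimizer of $\mathcal E$ gives smooth densities with energy arbitrarily close to $E_0$. Mollifying a condensed minimizer $F_T$ gives normal states whose free energy is arbitrarily close to $\mathcal F_{\min}(T)$ and whose energy is close to $E_T\to E_0$. Nor do normal states with controlled entropy obey any Bose-profile bound. That bound is available only for a normal \emph{minimizer}, through Theorem~\ref{prop-EL-minimizer}(i).

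So the gap has to be proved for the self-consistent Bose profiles $\alpha_{T,c,V}$ with $U_{c,V}\ge0$ and mass $M$, uniformly in $T$. This is the paper's class $\mathcal B_M$ and Lemma~\ref{lem-Jmin-gap}. Your proposed mechanism does not supply this. Momentum non-concentration only gives $\iint\frac{|p|^2}{2}\alpha_{T,c,V}\,dpdx\ge\kappa TM$, which vanishes as $T\downarrow0$; Bose profiles do concentrate in $p$ at scale $\sqrt T$.

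The missing idea is the spatial bound $\rho=R_T(U)\le R_T(0)=(2\pi T)^{3/2}\zeta(3/2)$. Together with the mass constraint and confinement, it forces every Bose profile with $\mathcal E\le C$ to have $T\ge\tau_C>0$. Only then does the kinetic bound give a uniform gap $\kappa\tau_C M$.

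You also need $E_T=\mathcal E(F_T)\to E_0$, not merely $\mathcal F_{\min}(T)\le E_0+o(1)$. This requires the $T$-independent bound $H(F)\ge-\eta\,\mathcal E(F)-C_\eta$, which follows from Lemma~\ref{lem-decay}. An entropy bound of the form $-C(1+T^{-\gamma})$ with $\gamma\ge1$ would not make $TH(F_T)$ vanish.

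\textbf{The monotonicity step.} Once the grand-canonical state at $(T_2,c)$ is normal with mass at least $M$, you assert that increasing $c$ keeps it ``strictly noncontact''. A normal state only satisfies $u\ge0$ with vanishing reaction, not $u>0$, so this is not automatic. It needs its own comparison at fixed $T_2$, as in Lemma~\ref{lem-chemical-comparison}:
\begin{itemize}
\item show $U_{c'}\ge U_c$ for $c'>c$ by testing with the negative part;
\item exclude a reaction on $\{U_{c'}=U_c\}$ by the zero-set Kato inequality again;
\item use continuity of the normal mass in $c$ and its decay to $0$ as $c\to\infty$ to apply the intermediate value theorem.
\end{itemize}

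\textbf{The high-temperature step.} Your mass heuristic ignores the self-consistent potential. What must be checked is that $V=o(T)$ on a fixed ball. This follows from $\|V\|_\infty\le C\|\rho\|_1^{2/3}\|\rho\|_\infty^{1/3}\lesssim T^{1/2}$ under a bounded-mass contradiction hypothesis. The existence of the normal fixed point itself also needs an argument; the paper obtains it variationally in Lemma~\ref{lem-exist-ell}.
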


\subsection{Sharp critical mass and harmonic-trap dichotomy}
At fixed temperature the normal phase is a closed initial interval in the prescribed mass. The
following theorem identifies its endpoint and records the harmonic-trap dichotomy, where the
strength of the confinement determines whether the normal branch can accommodate arbitrary mass.

\begin{theorem}\label{thm-M}
Fix $T>0$, and let $F_0$ be the minimizer of \eqref{mp-boson} with mass $M>0$. Set
\[
R_T(0):=\int_{\R^3}\frac{1}{\exp\left(\frac{|p|^2}{2T}\right)-1}\,dp
=(2\pi T)^{3/2}\zeta(3/2),
\]
where $\zeta(s)=\sum_{n=1}^\infty n^{-s}$ denotes the Riemann zeta function. The identity follows from spherical coordinates and the standard formula
\[
 \int_0^\infty \frac{s^{a-1}}{e^s-1}\,ds=\Gamma(a)\zeta(a),\qquad a>1,
\]
applied with $a=3/2$. Then the following assertions hold.
\begin{enumerate}[(a)]
\item \emph{Sharp mass transition.} There exists a uniquely determined extended critical mass
\[
 M_\sharp(T)\in(0,\infty]
\]
such that
\[
 \rho_0^s=0 \quad\Longleftrightarrow\quad 0<M\leq M_\sharp(T),
 \qquad
 \rho_0^s\neq0 \quad\Longleftrightarrow\quad M>M_\sharp(T).
\]

\item \emph{Finiteness under strong confinement.} If either
\[
\Phi(x)\geq a|x|^q-b\qquad\text{for large }|x|\text{ with }q>2,
\]
or
\[
\Phi(x)=\Phi(0)+\frac{\omega^2}{2}|x|^2
\qquad\text{and}\qquad
3\omega^2>4\pi R_T(0).
\]
then $M_\sharp(T)<\infty$.

\item \emph{No condensation under weak harmonic trapping.} Assume
\[
\Phi(x)=\Phi(0)+\frac{\omega^2}{2}|x|^2
\qquad\text{and}\qquad
3\omega^2\leq4\pi R_T(0).
\]
Then $M_\sharp(T)=\infty$; equivalently, the minimizer is purely absolutely continuous for every
$M>0$.

\item \emph{Temperature--mass correspondence.} The map $T\mapsto M_\sharp(T)$ is nondecreasing,
and
\[
 T_\sharp(M)=\min\{T>0:M\leq M_\sharp(T)\}.
\]
\end{enumerate}
\end{theorem}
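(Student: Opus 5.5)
We will prove Theorem~\ref{thm-M} by working in the grand-canonical (fixed chemical potential) picture and using the two monotonicity statements announced in the introduction. These are proved in Section~\ref{sec-temp-transition}: normality at one mass persists at every lower mass, and normality at one temperature persists at every higher temperature.

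\emph{Part (a).} Set
\[
M_\sharp(T):=\sup\{M>0:\rho_0^s=0 \text{ for the mass-}M\text{ minimizer}\}.
\]
Mass monotonicity shows that the normal set is an initial interval. Positivity of $M_\sharp(T)$ will come from a small-mass argument. For small $M$, Theorem~\ref{prop-EL-minimizer}(i) together with the trivial bound $\rho_0^a\le R_T(0)$ shows that $c$ must be large, since the Bose profile at fixed temperature has mass tending to zero only as $c\to\infty$. Once $c$ is large, the contact condition $u_0=0$ on $\operatorname{supp}\rho_0^s$ cannot hold, because $u_0\ge \Phi_{\min}+c>0$ there, using $V\ge0$. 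Hence $\rho_0^s=0$ for small $M$.

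It remains to show that the endpoint belongs to the normal set when $M_\sharp(T)<\infty$. For this take $M_n\uparrow M_\sharp$ with normal minimizers $F_n$. By the compactness from Section~2 (minimizing sequences and lower semicontinuity, together with uniqueness in Proposition~\ref{prop-existence-minimizer}), $F_n\rightharpoonup F_{M_\sharp}$ and the constants $c_n$ stay bounded. The Euler--Lagrange relations pass to the limit, so the limit satisfies $u\ge0$ together with the Bose formula with full mass. Since $\rho_0^a$ is given by the formula and $\int\rho^a=\lim M_n=M_\sharp$, the singular part has mass zero. This closedness step is the main technical point: we need to rule out mass escaping to the singular part in the weak-* limit. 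We do this by comparing the total mass with $\int\rho^a$, computed through the limiting potential via monotone or dominated convergence of $(e^{u/T}-1)^{-1}$.

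\emph{Parts (b) and (c).} For (b), assume normality for all $M$, so that $F^s=0$ and $\rho=\rho^a\le R_T(0)$. The constraint $u\ge0$ forces $c\ge -\Phi_{\min}$, so as $M\to\infty$ the potential $V$ grows. For superquadratic $\Phi$, we bound the mass the Bose profile can hold under $\Phi+V+c\ge0$ with $\rho\le R_T(0)$. The point is that $V_\rho(x)\le C M^{2/3}R_T(0)^{1/3}$, obtained by placing the mass in a ball of density $R_T(0)$. On the other hand, $\Phi\gtrsim|x|^q$ confines the support of $\{\rho\text{ large}\}$ to radius $\sim M^{1/3}$, and $q>2$ makes this incompatible for large $M$.

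In the harmonic case we use Gauss's law in radial symmetry, which is available because $F_0$ is radial by Proposition~\ref{prop-existence-minimizer}. The condition $\partial_r(\Phi+V)\ge 0$ against $\rho\le R_T(0)$ gives
\[
\partial_r u=\omega^2 r-\frac{m(r)}{r^2},\qquad m(r)\le \tfrac{4\pi}{3}R_T(0)r^3 .
\]
Now $\Delta u=3\omega^2-4\pi\rho$. If $3\omega^2>4\pi R_T(0)$, then $u$ is strictly subharmonic, so its minimum cannot be flat. Conversely, a normal state with $u(0)=0$ would force $\rho(0)=R_T(0)$, and then $\Delta u(0)>0$, which yields a contradiction for large mass; this contradiction comes from the requirement that $u$ be near zero on an increasingly large region. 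If $3\omega^2\le4\pi R_T(0)$, then for any $M$ we construct a normal solution of the Euler--Lagrange system. We take $c$ with $\Phi(0)+V(0)+c\downarrow0$, i.e.\ we solve the radial ODE shooting problem $u(0)=\epsilon$, and show that the mass diverges as $\epsilon\to0$. Since $\Delta u\le0$ near any near-zero region, $u$ stays near zero on an expanding ball. Uniqueness then identifies this solution as the minimizer.

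\emph{Part (d).} Temperature monotonicity says that normality at $(M,T)$ implies normality at $(M,T')$ for $T'>T$. Hence $M_\sharp(T)\le M_\sharp(T')$. Theorem~\ref{thm-T} gives the formula for $T_\sharp(M)$ as a minimum, where attainment comes from the closedness in $T$ already proved there.
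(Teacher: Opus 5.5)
The genuine gap is in part~(c): your shooting from $u(0)=\epsilon\downarrow0$ fails in the strictly subcritical case $3\omega^2<4\pi R_T(0)$.

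Put $f(s)=3\omega^2-4\pi R_T(s)$. It is strictly increasing with $f(0)<0$, so it has a unique zero $\alpha>0$. If $0<\epsilon<\alpha$, then $(r^2u')'=r^2f(u)\le r^2f(\epsilon)<0$ as long as $u\le\epsilon$. Hence $u(r)\le\epsilon+f(\epsilon)r^2/6$, and $u$ reaches zero at a radius at most $\sqrt{6\epsilon/|f(\epsilon)|}$. That radius \emph{shrinks} to $0$ as $\epsilon\to0$. Past that point the Bose factor is undefined, so no admissible normal state is produced.

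Your heuristic runs the wrong way. With $u'(0)=0$, the sign $\Delta u\le0$ makes $u$ radially decreasing, so it drives $u$ through zero rather than keeping it near zero on an expanding ball. In the critical case $\alpha=0$, where $\epsilon\downarrow0$ does work, one actually has $\Delta u=f(u)>0$.

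The missing idea is the correct base level $\alpha$, defined by $4\pi R_T(\alpha)=3\omega^2$. The constant $u\equiv\alpha$ is an infinite-mass equilibrium of $\Delta u=3\omega^2-4\pi R_T(u)$, and it is admissible exactly when $3\omega^2\le4\pi R_T(0)$. Large-mass normal states come from shooting at $u(0)=a>\alpha$ and letting $a\downarrow\alpha$:
\begin{itemize}
\item Since $f(a)>0$, you get $a+f(a)r^2/6\le u_a\le a+\omega^2r^2/2$, which gives global existence and a Gaussian tail.
\item Continuous dependence gives $u_a\to\alpha$ locally uniformly. So for any prescribed $R$ the mass is at least $\frac{4\pi}{3}R_T(\alpha+\varepsilon)R^3$ once $a$ is close to $\alpha$.
\item You must check that $u_a-\Phi$ has a finite limit at infinity, so that $u_a=\Phi+V_a-\mu_a$ with $V_a$ the Newtonian potential of $R_T(u_a)$.
\item You then need Proposition~\ref{prop-normal-minimizer}, i.e.\ sufficiency of the Euler--Lagrange system via the relative free-energy expansion. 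Uniqueness of the minimizer by itself does not show that a solution of the Euler--Lagrange system is the minimizer.
\end{itemize}
Once this is in place, your remark that unbounded normal masses suffice is correct, by the mass monotonicity of Corollary~\ref{cor-normal-monotone}. It lets you skip the paper's intermediate-value argument.

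Part~(b) also needs repair, though the fixes are short.

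In the harmonic case, the ``strictly subharmonic / $u$ near zero on an increasingly large region'' reasoning is not a proof. Your own display closes it: $m(r)\le\frac{4\pi}{3}R_T(0)r^3$ gives $\partial_ru\ge\frac{\delta_T}{3}r$ with $\delta_T=3\omega^2-4\pi R_T(0)>0$. Hence $u\ge\delta_Tr^2/6$, and $M\le4\pi\int_0^\infty R_T(\delta_Tr^2/6)r^2\,dr$ uniformly over radial normal states. This is exactly the paper's Proposition~\ref{prop-harmonic-strong}.

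In the superquadratic case, the claim that $u\ge0$ forces $c\ge-\Phi_{\min}$ is false: general normal states only satisfy $\Phi+V+c\ge0$. What you actually get is $\mu=-c\le\Phi_{\min}+V(x_0)$. With this correction your scaling idea is a valid alternative to the paper's criterion $K_T(\mu)=o(\mu)$:
\begin{itemize}
\item The bathtub bound gives $\mu\le\Phi_{\min}+CM^{2/3}R_T(0)^{1/3}$.
\item The comparison $\rho\le R_T((\Phi-\mu)_+)$ gives $M\le C_T(1+\mu_+)^{3/q}$.
\item Combining, $M\le C_T(1+M^{2/q})$, which bounds $M$ because $q>2$.
\end{itemize}
The real comparison is between the confinement radius $\mu^{1/q}\lesssim M^{2/(3q)}$ and the radius $M^{1/3}$ forced by the density cap. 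It is not a confinement radius of order $M^{1/3}$, as you wrote. The paper instead estimates $V(x_0)$ directly through the density comparison and caps $\mu$ without going through the mass.

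Your a priori small-mass argument in (a) is a legitimate alternative to the paper's explicit $F_{T,c}$ construction. The argument is that $c\to+\infty$, hence $u_0\ge\Phi_{\min}+c>0$ and there is no contact. You do need to justify $c\to+\infty$ with a bound that handles the $M$-dependence of $V$, e.g.\ $\int_BV\le CM$ on a fixed ball $B$. The endpoint closedness and part~(d) are essentially the paper's arguments.
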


\subsection{Abstract conditional variational stability}
Let $F_0$ be the unique minimizer of \eqref{mp-boson} at temperature $T>0$ and mass $M$,
and let $c$ and $u_0$ be as in Theorem~\ref{prop-EL-minimizer}. 
With $j$ defined in \eqref{jdef}, we define the relative entropy integrand
\[
 \mathcal J(a\mid b):=j(a)-j(b)-j'(b)(a-b),\qquad a\geq0,\ b>0.
\]
For $F\in\mathcal A$ with the same mass $M$, we define the relative free-energy functional at
$F_0$ by
\begin{align}\label{rel-distance-main}
\mathcal D_T(F\mid F_0)
&:=T\iint_{\R^6}\mathcal J(F^a\mid F_0^a)\,dpdx
   +\iint_{\R^6}\left(\frac{|p|^2}{2}+u_0(x)\right)\,dF^s(x,p)  \notag\\
&\quad +\frac1{8\pi}\int_{\R^3}|\nabla(V_F-V_{F_0})|^2\,dx.
\end{align}
Here $u_0$ is understood through its quasi-continuous representative. The second term is well-defined
because finite-Coulomb-energy spatial measures do not charge sets of Newtonian capacity zero.

We shall use the following abstract class of curves. A curve
$F:[0,\tau]\to\mathcal A$ is called an admissible free-energy curve on $[0,\tau]$ if it is
narrowly continuous as a curve of finite Radon measures, has fixed mass $M$, and satisfies the
free-energy inequality
\[
 \mathcal F_T(F(t))\leq \mathcal F_T(F(0)),\qquad 0\leq t\leq\tau.
\]
This definition is deliberately conditional: it applies when a solution concept for a chosen
kinetic equation produces such a curve, but no well-posedness assertion is included in the result below.

\begin{prop}[Conditional variational stability]\label{thm-dyn-stab}
Let $F_0$ be the unique minimizer of \eqref{mp-boson} at temperature $T>0$ and mass $M$. Let
$F:[0,\tau]\to\mathcal A$ be an admissible free-energy curve with the same mass. Then
\[
 \mathcal D_T(F(t)\mid F_0)\leq \mathcal D_T(F(0)\mid F_0),
 \qquad 0\leq t\leq\tau.
\]
In particular, $F_0$ is conditionally stable with respect to the relative free-energy functional.
Moreover, this
energetic stability implies variational weak stability: if $\mathcal U$ is any weak-* neighborhood of
$F_0$ in $\mathcal M(\R^6)$ and $\eta>0$, then there exists $\delta>0$ such that
\[
 \mathcal D_T(F(0)\mid F_0)<\delta
\]
implies, for all $0\leq t\leq\tau$,
\[
 F(t)\in\mathcal U,
 \qquad
 \|\nabla(V_{F(t)}-V_{F_0})\|_{L^2(\R^3)}<\eta.
\]
\end{prop}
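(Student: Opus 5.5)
The plan is to prove the identity
\[
 \mathcal F_T(F)-\mathcal F_T(F_0)=\mathcal D_T(F\mid F_0)\qquad\text{for every }F\in\mathcal A_M,
\]
and then deduce both assertions of Proposition~\ref{thm-dyn-stab} from it. Fix $F\in\mathcal A_M$ and write $F=F^a\,dpdx+F^s$ and $F_0=F_0^a\,dpdx+\rho_0^s\otimes\delta_{p=0}$, using Theorem~\ref{prop-EL-minimizer}.

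\textbf{The Coulomb term.} Expanding the square and applying Proposition~\ref{prop-quasicont} gives
\[
\frac1{8\pi}\|\nabla V_F\|^2-\frac1{8\pi}\|\nabla V_{F_0}\|^2=\frac1{8\pi}\|\nabla(V_F-V_{F_0})\|^2+\int\widetilde V_{F_0}\,d(\rho_F-\rho_{F_0}).
\]
The pairing is legitimate because $\widetilde V_{F_0}$ is quasi-continuous and both measures have finite Coulomb energy. If the integrability of $\widetilde V_{F_0}$ against the two measures is not automatic, I will handle it by truncation.

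\textbf{The entropy term.} On the absolutely continuous part, Theorem~\ref{prop-EL-minimizer}(i) gives
\[
T\,j'(F_0^a)=-T\log\bigl(1+1/F_0^a\bigr)=-\Bigl(\tfrac{|p|^2}{2}+u_0\Bigr).
\]
Hence
\[
T\bigl(j(F^a)-j(F_0^a)\bigr)=T\,\mathcal J(F^a\mid F_0^a)-\Bigl(\tfrac{|p|^2}2+u_0\Bigr)(F^a-F_0^a).
\]

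\textbf{Assembling.} Write $|p|^2/2+\Phi+\widetilde V_{F_0}=|p|^2/2+u_0-c$. Adding the pieces, the linear terms on the absolutely continuous parts cancel. What remains is
\[
\int\Bigl(\tfrac{|p|^2}2+u_0\Bigr)dF^s-\int\Bigl(\tfrac{|p|^2}2+u_0\Bigr)dF_0^s-c\bigl(F(\R^6)-F_0(\R^6)\bigr).
\]
The middle term vanishes by Theorem~\ref{prop-EL-minimizer}(ii), since $F_0^s$ sits at $p=0$ and $u_0=0$ holds $\rho_0^s$-a.e. The last term vanishes because the masses are equal. This proves the identity. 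The main technical obstacle is justifying these rearrangements when individual terms may be infinite. The moment bounds in $\mathcal A$, the boundedness of $\Phi_-$, and $\widetilde V_{F_0}\ge0$ make every term either finite or of one sign. I will approximate $F^a$ by truncations and pass to the limit by monotone convergence. Note that $\mathcal J\ge0$ by convexity, and $u_0\ge0$ q.e., hence $\rho_F^s$-a.e. Therefore $\mathcal D_T\ge0$.

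\textbf{Monotonicity along curves.} The first assertion now follows from the free-energy inequality:
\[
\mathcal D_T(F(t)\mid F_0)=\mathcal F_T(F(t))-\mathcal F_{\min}\le\mathcal F_T(F(0))-\mathcal F_{\min}=\mathcal D_T(F(0)\mid F_0).
\]

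\textbf{Weak stability.} The potential bound is immediate, since $\|\nabla(V_{F(t)}-V_{F_0})\|^2\le 8\pi\,\mathcal D_T(F(0)\mid F_0)<8\pi\delta$; choose $\delta\le\eta^2/(8\pi)$. For the neighborhood $\mathcal U$, argue by contradiction. Suppose there are $\delta_n\to0$ and states $G_n=F_n(t_n)\notin\mathcal U$ with $\mathcal F_T(G_n)-\mathcal F_{\min}<\delta_n$. Then $(G_n)$ is a minimizing sequence in $\mathcal A_M$. By the compactness and lower semicontinuity established in Section~2 (tightness from the moment bounds, and lower semicontinuity of $\mathcal F_T$), every subsequence has a weak-* limit that is a minimizer. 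By uniqueness in Proposition~\ref{prop-existence-minimizer}, the whole sequence converges weak-* to $F_0$. This contradicts $G_n\notin\mathcal U$ for large $n$, since $\mathcal U$ is a weak-* neighborhood of $F_0$.
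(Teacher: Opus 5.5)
Your proposal is correct and follows essentially the same route as the paper. You establish the relative free-energy identity via the Coulomb polarization formula, the Euler--Lagrange rewriting of the entropy difference as a Bregman term, the singular contact condition and mass cancellation of $c$; you then get monotonicity from the free-energy inequality, and weak-* stability by contradiction, using minimizing-sequence compactness plus uniqueness. Your explicit bound $\|\nabla(V_{F(t)}-V_{F_0})\|_2^2\le 8\pi\,\mathcal D_T(F(0)\mid F_0)$ is just a quantitative form of the paper's remark that the field term controls the potential directly. The truncation you anticipate is not needed, since for $F\in\mathcal A$ every term (including $\int\widetilde V_{F_0}\,d\rho_F$, by Proposition~\ref{prop-quasicont}) is already finite.
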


\begin{remark}
Formal model equations that motivate Proposition~\ref{thm-dyn-stab} include the Vlasov--Poisson flow,
the spatially inhomogeneous bosonic Boltzmann--Poisson equation, and bosonic
Vlasov--Fokker--Planck dynamics. The proposition applies only after an independently chosen solution
theory supplies the required mass conservation and free-energy inequality. The equations are
displayed in Section~\ref{sec-dyn-stab}.
\end{remark}

\section{Variational compactness and Euler--Lagrange equations}

The purpose of this section is to justify the measure-valued variational framework used in Section~1. We first record the Coulomb energy facts and quasi-continuity properties needed to pair potentials with finite-energy measures. We then prove lower semicontinuity, existence and uniqueness of minimizers, and finally the Euler--Lagrange and obstacle characterizations.

\subsection{Coulomb energy and quasi-continuity}
\begin{proof}[Proof of Proposition \ref{prop-coulomb-energy}]
We use the Fourier transform convention
\[
 \widehat \mu(\xi)=\int_{\R^3}e^{-ix\cdot\xi} d\mu,
\]
for every finite Radon measure $\mu$ on $\R^3$
The standard Fourier representation of Newtonian energy states that, for a finite positive measure,
\begin{equation}\label{eq-fourier-coulomb}
 I(\rho):=\iint_{\R^3\times\R^3}\frac{d\rho(x)d\rho(y)}{|x-y|}
 =\frac{1}{(2\pi)^3}\int_{\R^3}\frac{4\pi}{|\xi|^2}
 |\widehat\rho(\xi)|^2\,d\xi,
\end{equation}
with both sides allowed to be infinite. It follows first for smooth compactly supported densities
from $\widehat{|x|^{-1}}=4\pi|\xi|^{-2}$ and then for positive measures by regularization and
truncation of the Newtonian kernel.

Suppose that $I(\rho)<\infty$. Choose a standard mollifier $\eta_{\e}(x) = \e^{-3}\eta(x/\e)$ with
$|\widehat\eta|\leq1$, and set
\[
 \rho_\varepsilon=\eta_\varepsilon*\rho,\qquad
 V_\varepsilon=|x|^{-1}*\rho_\varepsilon.
\]
For $\varepsilon,\delta>0$, Plancherel's theorem and \eqref{eq-fourier-coulomb}, applied also to the
signed measure $\rho_\varepsilon-\rho_\delta$, give
\[
 \|\nabla(V_\varepsilon-V_\delta)\|_2^2
 =\frac{16\pi^2}{(2\pi)^3}\int_{\R^3}
 \frac{|\widehat\rho(\xi)|^2
 |\widehat\eta(\varepsilon\xi)-\widehat\eta(\delta\xi)|^2}{|\xi|^2}\,d\xi.
\]
The integrand tends pointwise to zero and is dominated by an integrable multiple of the integrand
in \eqref{eq-fourier-coulomb}. Thus $\{V_\varepsilon\}$ is Cauchy in $\dot H^1$. Since
$V_\varepsilon=\eta_\varepsilon*V_\rho$ and the Newtonian potential of a finite measure is locally
integrable, $V_\varepsilon\to V_\rho$ in distributions. Hence $V_\rho\in\dot H^1$. A second
application of dominated convergence yields
\[
 \|\nabla V_\rho\|_2^2
 =\frac{16\pi^2}{(2\pi)^3}\int_{\R^3}
 \frac{|\widehat\rho(\xi)|^2}{|\xi|^2}\,d\xi
 =4\pi I(\rho).
\]

Conversely, suppose that $V_\rho\in\dot H^1$. The distributional identity
$-\Delta V_\rho=4\pi\rho$ implies, in Fourier variables,
\[
 |\xi|^2\widehat V_\rho=4\pi\widehat\rho.
\]
The Fourier characterization of $\dot H^1$ therefore gives
\[
 \frac{1}{(2\pi)^3}\int_{\R^3}\frac{4\pi}{|\xi|^2}
 |\widehat\rho(\xi)|^2\,d\xi
 =\frac1{4\pi}\|\nabla V_\rho\|_2^2<\infty.
\]
Formula \eqref{eq-fourier-coulomb} now proves $I(\rho)<\infty$ and the asserted identity.
\end{proof}

\begin{proof}[Proof of Proposition \ref{prop-quasicont}]
Set $\mu=\rho_F$. Since $V_F\in\dot H^1$, Proposition \ref{prop-coulomb-energy} implies that
$\mu$ has finite Coulomb energy. We first prove explicitly that $\mu$ charges no set of zero
Newtonian capacity. For $\phi\in C_c^\infty(\R^3)$, Fubini's theorem and
$-\Delta |x|^{-1}=4\pi\delta_0$ give
\[
\int_{\R^3}\nabla V_F\cdot\nabla\phi\,dx
=\int_{\R^3}V_F(-\Delta\phi)\,dx
=4\pi\int_{\R^3}\phi\,d\mu.
\]
Let $K\subset\R^3$ be compact. If $\varphi\in C_c^\infty(\R^3)$, $\varphi\ge0$, and $\varphi\ge1$ in a neighborhood
of $K$, then
\[
 \mu(K)\le \int_{\R^3}\varphi\,d\mu
 =\frac1{4\pi}\int_{\R^3}\nabla\varphi\cdot\nabla V_F\,dx
 \le \frac1{4\pi}\|\nabla\varphi\|_2\|\nabla V_F\|_2.
\]
Taking the infimum over such $\varphi$ and using Proposition \ref{prop-coulomb-energy}, we obtain
the energy-capacity estimate
\[
 \mu(K)^2
 \le \frac1{4\pi}
 \left(\iint_{\R^3\times\R^3}\frac{1}{|x-y|}\,d\mu(x)d\mu(y)\right)
 \operatorname{Cap}(K).
\]
Equivalently, up to the harmless normalization constant in the definition of capacity,
\[
 \mu(K)\le C I(\mu)^{1/2}\operatorname{Cap}(K)^{1/2},
 \qquad
 I(\mu):=\iint_{\R^3\times\R^3}\frac{1}{|x-y|}\,d\mu(x)d\mu(y).
\]
Now let $E$ be a Borel set with $\operatorname{Cap}(E)=0$. By the outer regular definition of
capacity, for every $\varepsilon>0$ there is an open set $O\supset E$ with
$\operatorname{Cap}(O)<\varepsilon$. For every compact $K\subset O$, the preceding estimate gives
\[
 \mu(K)\le C I(\mu)^{1/2}\operatorname{Cap}(O)^{1/2}
 \le C I(\mu)^{1/2}\varepsilon^{1/2}.
\]
Taking the supremum over compact $K\subset O$ and using the inner regularity of the Radon measure
$\mu$, we get the same bound for $\mu(O)$. Hence
\[
 \mu(E)\le \mu(O)\le C I(\mu)^{1/2}\varepsilon^{1/2}.
\]
Letting $\varepsilon\downarrow0$, we conclude that $\mu(E)=0$. Thus $\rho_F$ charges no sets of
zero Newtonian capacity.

Consequently, two quasi-continuous representatives of the same
$\dot H^1$ function agree $\rho_F$-a.e., so the integral against
$\rho_F$ is independent of the representative. For a general
$\phi\in\dot H^1(\R^3)$, choose $\phi_n\in C_c^\infty(\R^3)$ such that
$\phi_n\to\phi$ in $\dot H^1$. For $\psi\in C_c^\infty(\R^3)$, we have
\[
\left|\int_{\R^3}\psi\,d\rho_F\right|
=
\frac{1}{4\pi}
\left|\int_{\R^3}\nabla V_F\cdot\nabla\psi\,dx\right|
\leq
\frac{1}{4\pi}\|\nabla V_F\|_2\|\nabla\psi\|_2.
\]
Moreover, approximating $|\psi|$ in $\dot H^1$ by nonnegative functions
in $C_c^\infty(\R^3)$ and using Fatou's lemma gives
\[
\int_{\R^3}|\psi|\,d\rho_F
\leq
\frac{1}{4\pi}\|\nabla V_F\|_2\|\nabla\psi\|_2.
\]
Applying this estimate to $\phi_n-\phi_k$ shows that $\{\phi_n\}$ is Cauchy in $L^1(d\rho_F)$. Hence $\phi_n\to \phi_*$ in $L^1(d\rho_F)$ for some $\phi_*\in L^1(d\rho_F)$. On the other hand, by the standard
quasi-continuity theorem, a subsequence converges quasi-everywhere to
the quasi-continuous representative $\widetilde\phi$. Since $\rho_F$
does not charge sets of zero capacity, the two limits agree
$\rho_F$-a.e. Therefore,
\[
\int_{\R^3}|\widetilde\phi|\,d\rho_F
\leq
\frac{1}{4\pi}\|\nabla V_F\|_2\|\nabla\phi\|_2,
\]
and $\phi_n\to\widetilde\phi$ in $L^1(d\rho_F)$.
Thus the continuous
extension of the smooth pairing agrees with integration against
$\widetilde\phi$. Passing to the limit in the smooth identity yields
\[
\int_{\R^3}\nabla V_F\cdot\nabla\phi\,dx
=
4\pi\int_{\R^3}\widetilde\phi\,d\rho_F.
\]
\end{proof}

\subsection{Entropy estimates and lower semicontinuity}
\begin{lemma}\label{lem-decay}
For all $s\geq0$ and $a\geq0$,
\[
h(s)\leq (2+a)s+2e^{-a/2}.
\]
Consequently, for any density $F(x,p)\geq0$,
\begin{equation}\label{decay}
h(F(x,p)) \leq (2+|p|+\sqrt{\Phi_+(x)})F(x,p)
+2e^{-\frac{|p|+\sqrt{\Phi_+(x)}}{2}}.
\end{equation}
\end{lemma}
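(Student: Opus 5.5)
Since $h(0)=0$, $h$ is concave, and $h'(s)=\log(1+1/s)$, the plan is to split into the regimes $s\le e^{-a/2}$ and $s\ge e^{-a/2}$. The threshold is where the linear term $as$ catches up with the logarithmic growth of $h'$.

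\emph{Basic form of $h$.} First rewrite
\[
h(s)=s\log\Bigl(1+\tfrac1s\Bigr)+\log(1+s),
\]
and bound the second term by $\log(1+s)\le s$.

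\emph{Large $s$.} Suppose $s\ge e^{-a/2}$. Then $1/s\le e^{a/2}$, so
\[
\log\Bigl(1+\tfrac1s\Bigr)\le \log\bigl(1+e^{a/2}\bigr)\le \tfrac a2+\log 2\le a+1 .
\]
Hence $h(s)\le (a+1)s+s\le(2+a)s$.

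\emph{Small $s$.} Suppose $s\le e^{-a/2}$. Monotonicity of $h$ (indeed $h'>0$) gives $h(s)\le h(e^{-a/2})$. Write $t=e^{-a/2}\le1$. Then
\[
h(t)=t\log\Bigl(1+\tfrac1t\Bigr)+\log(1+t)\le t\log\Bigl(\tfrac2t\Bigr)+t .
\]
Now $t\log(1/t)=t\,a/2$, and we use $t\,a/2\le t\cdot 2e^{a/4}\cdot(\ldots)$ only through the elementary bound $u\log(1/u)\le u^{1/2}$ for $u\in(0,1]$. This yields $h(t)\le \sqrt t+(1+\log2)t$.

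That is $e^{-a/4}$ rather than $e^{-a/2}$, so the small-$s$ step needs more care to get the stated constant. This is where I expect the main (purely computational) obstacle.

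\emph{Adjusting the threshold.} To fix it, I would place the threshold at $s_0=e^{-a}$ instead.
\begin{itemize}
\item For $s\ge s_0$: $\log(1+1/s)\le\log(1+e^{a})\le a+\log 2$, so $h(s)\le(a+\log2+1)s\le(2+a)s$.
\item For $s\le s_0$: $h(s)\le h(s_0)\le s_0(a+\log2)+s_0=e^{-a}(a+1+\log2)$. Since $(a+2)e^{-a/2}\le 2$ for $a\ge0$ (its derivative is $-\tfrac a2 e^{-a/2}\le0$, and it equals $2$ at $a=0$), we get $(a+1+\log2)e^{-a}\le 2e^{-a/2}$.
\end{itemize}
Both cases together give $h(s)\le(2+a)s+2e^{-a/2}$.

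\emph{Consequence \eqref{decay}.} Finally, apply the inequality pointwise with $s=F(x,p)$ and the choice $a=|p|+\sqrt{\Phi_+(x)}\ge0$. This gives exactly \eqref{decay}.
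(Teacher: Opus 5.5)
Your final argument, with threshold $s_0=e^{-a}$, is correct. Above $s_0$ you get $h(s)\le(a+1+\log2)s\le(2+a)s$. Below it, monotonicity gives $h(s)\le h(s_0)\le(a+1+\log2)e^{-a}\le 2e^{-a/2}$, using $(a+2)e^{-a/2}\le2$. The consequence then follows from the substitution $a=|p|+\sqrt{\Phi_+(x)}$, exactly as in the paper.

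Your route differs from the paper's, which uses no case split. Since $h$ is concave with $h'(s)=\log(1+1/s)$, the supremum of $h(s)-bs$ is attained where $h'(s)=b$, i.e. at $s=1/(e^b-1)$, and equals $-\log(1-e^{-b})$. Taking $b=2+a$ and using $-\log(1-r)\le2r$ for $0\le r\le e^{-2}$ gives $h(s)\le(2+a)s+2e^{-(2+a)}$, which is stronger than required.

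The paper's version is a one-line Legendre-transform computation that is sharp for each slope. Yours is more hands-on: it needs only the rewriting $h(s)=s\log(1+1/s)+\log(1+s)$, the bound $\log(1+s)\le s$, and monotonicity. The price is that it depends on placing the threshold correctly, as your abandoned first attempt at $e^{-a/2}$ shows.

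In a write-up, delete that first attempt, including the unfinished line about $u\log(1/u)\le u^{1/2}$, and present only the $s_0=e^{-a}$ argument.
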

\begin{proof}
For $b>0$, the concavity of $h$ and the identity $h'(s)=\log(1+1/s)$ give
\[
\sup_{s\geq0}\{h(s)-bs\}=-\log(1-e^{-b}).
\]
Taking $b=2+a$, we get
\[
h(s)\leq (2+a)s-\log(1-e^{-(2+a)}).
\]
Since $-\log(1-r)\leq 2r$ for $0\leq r\leq e^{-2}$ and
$e^{-(2+a)}\leq e^{-a/2}$ for $a\geq0$, the first estimate follows. The second one is obtained by
choosing $a=|p|+\sqrt{\Phi_+(x)}$.
\end{proof}

\begin{lemma}\label{lsc}
Let $\{F_n\}\subset\mathcal A$ and let $F_0\in\mathcal M(\R^6)$ be such that
\[
\sup_n\iint_{\R^6}\left(\frac{|p|^2}{2}+\Phi_+(x)\right)\,dF_n\leq C
\]
and
\[
\lim_{n\to\infty}\iint_{\R^6}\varphi\,dF_n
=\iint_{\R^6}\varphi\,dF_0,\qquad \forall\varphi\in C_0(\R^6).
\]
Then $F_0\in\mathcal A$ and
\[
H(F_0)\leq \liminf_{n\to\infty}H(F_n),
\qquad
\|\nabla V_{F_0}\|_2^2\leq\liminf_{n\to\infty}\|\nabla V_{F_n}\|_2^2.
\]
\end{lemma}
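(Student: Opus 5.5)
The proof has three parts: $F_0\in\mathcal A$, lower semicontinuity of the Coulomb energy, and lower semicontinuity of the entropy. The entropy part is the main obstacle, because $j$ is unbounded below and the phase space is not compact.

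\emph{Membership in $\mathcal A$.} Nonnegativity of $F_0$ passes to the weak-* limit. For the moment bound, test against $\min\{|p|^2/2+\Phi_+,K\}\chi_R$ with $\chi_R$ a cutoff, pass to the limit, and let $R\to\infty$, $K\to\infty$; monotone convergence gives $\iint(|p|^2/2+\Phi_+)\,dF_0\le C$. The uniform moment bound together with $\Phi_+\to\infty$ and $|p|^2\to\infty$ gives tightness:
\[
F_n(\{|x|+|p|>R\})\le C/m(R), \qquad m(R)\to\infty.
\]
Hence the masses also converge, $F_0(\R^6)=M$, and the convergence is narrow. In particular the spatial marginals satisfy $\rho_{F_n}\to\rho_{F_0}$ narrowly.

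\emph{Coulomb part.} I use Proposition~\ref{prop-coulomb-energy}, which identifies $\|\nabla V_F\|_2^2$ with $4\pi I(\rho_F)$. Write $I(\rho)=\sup_{k}\iint\min\{|x-y|^{-1},k\}\,d\rho\,d\rho$. Each truncated kernel is bounded and continuous, and $\rho_{F_n}\otimes\rho_{F_n}\to\rho_{F_0}\otimes\rho_{F_0}$ narrowly. So each truncated energy passes to the limit, and a supremum of continuous functionals is lower semicontinuous. This yields the second inequality, and in particular $I(\rho_{F_0})<\infty$. Proposition~\ref{prop-coulomb-energy} then gives $V_{F_0}\in\dot H^1$, completing $F_0\in\mathcal A$.

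\emph{Entropy part.} Set $G(F)=\iint h(F^a)\,dpdx$, so $H=-G$, and I need $G(F_0)\ge\limsup G(F_n)$. Fix a large ball $B_R\subset\R^6$ and split $G(F_n)$ into the contributions from $B_R$ and from its complement.

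On $B_R^c$, the pointwise bound \eqref{decay} of Lemma~\ref{lem-decay} gives
\[
\iint_{B_R^c}h(F_n^a)\le \iint_{B_R^c}\bigl(2+|p|+\sqrt{\Phi_+}\bigr)\,dF_n+2\iint_{B_R^c}e^{-(|p|+\sqrt{\Phi_+})/2}.
\]
The weight $2+|p|+\sqrt{\Phi_+}$ grows strictly slower than $|p|^2/2+\Phi_+$, so the moment bound makes the first term $o(1)$ as $R\to\infty$, uniformly in $n$. The second term is $o(1)$ by the integrability hypothesis on $e^{-\frac12\sqrt{\Phi_+}}$ combined with $\int e^{-|p|/2}\,dp<\infty$. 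The same estimates apply to $F_0$.

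On $B_R$, I invoke the Demengel--Temam relaxation result for the convex functional $\mu\mapsto\int_{B_R}j(\mu^a)+\int j^\infty\,d\mu^s$ on the bounded open set $B_R$. Since $j$ is convex, continuous, and has linear growth from below ($j\ge -Cs-C$ follows from the bound on $h$ above), this functional is weak-* lower semicontinuous on measures of $B_R$. For positive measures $j^\infty=0$. One technical point is that weak-* convergence restricted to an open ball needs $F_0(\partial B_R)=0$, which I arrange by choosing $R$ outside the countable set of radii charged by $F_0$. If that is awkward, I would instead localize with a continuous cutoff $\chi$, using the Fenchel dual representation
\[
\int\chi\, h(f)=\inf_{\psi}\int\chi\bigl(\psi f+h^*(\psi)\bigr),
\]
whose right-hand side is an infimum of functionals continuous under narrow convergence. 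This gives upper semicontinuity of $\int\chi\,h(F^a)$ directly, and the singular part contributes nothing because $h$ has zero slope at infinity.

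Combining the two pieces and letting $R\to\infty$ gives $G(F_0)\ge\limsup_n G(F_n)$, which is $H(F_0)\le\liminf_n H(F_n)$.
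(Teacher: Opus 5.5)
Your proposal is correct, and it departs from the paper's proof in two places.

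\textbf{Coulomb term.} The paper extracts $V_{F_n}\rightharpoonup V_0$ weakly in $\dot H^1$, identifies $V_0=V_{F_0}$ from $-\Delta V_0=4\pi\rho_{F_0}$, and uses weak lower semicontinuity of the norm. You instead prove narrow lower semicontinuity of the Newtonian energy itself. You write it as a supremum over the bounded continuous kernels $\min\{|x-y|^{-1},k\}$ tested against $\rho_{F_n}\otimes\rho_{F_n}$, and pass to $\dot H^1$ only at the end via Proposition~\ref{prop-coulomb-energy}. This is more elementary and needs no subsequence; the price is first upgrading to narrow convergence through tightness, which you do.

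\textbf{Entropy.} The paper replaces $j$ by $j_m=\max\{j,-m\}$, which is bounded below with zero recession. Demengel--Temam then applies on $\Omega_R=\{W<R\}$, and the truncation is removed using $(h-m)_+\le\varepsilon s$ uniformly in $n$. You skip the truncation.

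Your first variant is valid, but note what makes it work. Write $j=(j+2s+2)-(2s+2)$. The nonnegative convex part (recession slope $2$) is weak-* lower semicontinuous, while the linear part requires $F_n(B_R)\to F_0(B_R)$. That convergence of masses is what your choice $F_0(\partial B_R)=0$ together with tightness actually buys. It is not needed for weak-* convergence of the restrictions, which holds for every $R$. A linear lower bound with only weak-* convergence would not suffice in general.

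Your Fenchel variant with $\chi\in C_c(\R^6)$ is the cleanest. Set $h^*(b)=\sup_{s\ge0}\{h(s)-bs\}=-\log(1-e^{-b})$. Then $h(s)\le\psi s+h^*(\psi)$ for continuous $\psi>0$, and $F_n^s\ge0$. Hence $\limsup_n\int\chi h(F_n^a)\le\int\chi h(F_0^a)$ under vague convergence alone. The only nontrivial input is that the infimum evaluated at $F_0$ equals $\int\chi h(F_0^a)$, i.e.\ that $F_0^s$ is invisible; this is exactly the zero slope of $h$ at infinity. In either variant the tail estimate is needed only for $F_n$, since $h\ge0$.

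\textbf{A caveat shared with the paper.} Your step ``in particular $I(\rho_{F_0})<\infty$'' requires $\liminf_n\|\nabla V_{F_n}\|_2<\infty$, which is not among the hypotheses. Without it the conclusion $F_0\in\mathcal A$ is false. For instance, take $F_n=M\eta_n(x)\eta_n(p)\,dxdp$ with $\eta_n$ a mollifier concentrating at the origin. It lies in $\mathcal A$ and satisfies the moment bound, but converges to $M\delta_0\otimes\delta_0$, whose potential $M/|x|$ is not in $\dot H^1$. The paper's remark ``there is nothing to prove'' in that case covers the inequality but not membership in $\mathcal A$. In every use of the lemma (minimizing sequences, the closed-normal-graph argument, the stability proof) one has $\sup_n\|\nabla V_{F_n}\|_2<\infty$. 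So this hypothesis should simply be added to the statement.
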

\begin{proof}
The moment bound and the confinement of $\Phi$ imply tightness in both $x$ and $p$. Hence
$F_0(\R^6)=M$, and by lower semicontinuity of nonnegative lower semicontinuous functions,
\[
\iint_{\R^6}\left(\frac{|p|^2}{2}+\Phi_+(x)\right)\,dF_0\leq C.
\]

We next prove the potential lower semicontinuity. If
$\liminf_n\|\nabla V_{F_n}\|_2=\infty$, there is nothing to prove. Otherwise, after taking a
subsequence, $V_{F_n}\rightharpoonup V_0$ weakly in $\dot H^1$. For every
$\phi\in C_c^\infty(\R^3)$,
\[
\int_{\R^3}\nabla V_0\cdot\nabla\phi\,dx
=\lim_{n\to\infty}4\pi\int_{\R^3}\phi\,d\rho_{F_n}
=4\pi\int_{\R^3}\phi\,d\rho_{F_0}.
\]
Thus $-\Delta V_0=4\pi\rho_{F_0}$. Therefore $V_0=V_{F_0}$ in $\dot H^1$, and the weak lower semicontinuity of the norm gives the desired estimate.

It remains to discuss the entropy. We give the argument carefully because the convergence is
weak-* convergence in the space of finite Radon measures, not weak convergence in $L^1$.
Recall from \eqref{jdef} that $j=-h$ is convex on $[0,\infty)$, satisfies
$j(0)=0$ and $j\leq0$, and has recession function
\[
j_\infty=\lim_{s\to\infty}\frac{j(s)}s=0.
\]
We shall prove
\[
\iint_{\R^6}j(F_0^a)\,dpdx
\leq \liminf_{n\to\infty}\iint_{\R^6}j(F_n^a)\,dpdx.
\]
Together with $H(F)=\iint j(F^a)\,dpdx$, this is the desired lower semicontinuity of the entropy.

Put
\[
W(x,p):=1+\frac{|p|^2}{2}+\Phi_+(x).
\]
The assumptions give
\[
\sup_n\iint_{\R^6}W\,dF_n<\infty,
\qquad
\iint_{\R^6}W\,dF_0<\infty.
\]
For $R>0$, define
\[
\Omega_R:=\{(x,p)\in\R^6: W(x,p)<R\}.
\]
Since $\Phi$ is confining, $\Omega_R$ is bounded. We choose $R$ so that
$F_0(\partial\Omega_R)=0$; this excludes at most countably many values of $R$. Then the weak-*
convergence of $F_n$ to $F_0$ implies
\[
F_n\llcorner\Omega_R\stackrel{*}{\rightharpoonup}F_0\llcorner\Omega_R
\quad\text{in }\mathcal M(\Omega_R),
\]
where $F\llcorner\Omega_R$ denotes the restriction of the measure $F$
to $\Omega_R$, i.e., $(F\llcorner\Omega_R)(A):= F(A\cap\Omega_R)$ for every Borel set $A\subset\R^6$.

We first prove lower semicontinuity on this bounded set. For $m>0$, let
\[
j_m(s):=\max\{j(s),-m\}.
\]
Then $j_m$ is convex, continuous, bounded from below, and has recession
function $(j_m)_\infty=0$. Hence, by the lower-semicontinuity theory for convex functions of measures on bounded domains \cite[Theorem~1.1 and Proposition~1.2]{DemengelTemam}, we obtain
\[
\iint_{\Omega_R}j_m(F_0^a)\,dpdx
\leq
\liminf_{n\to\infty}\iint_{\Omega_R}j_m(F_n^a)\,dpdx.
\]
Equivalently, defining the convex conjugate
$j_m^*(q):=\sup_{s\geq0}\{qs-j_m(s)\}$ for $q\in\R$, this follows from the Fenchel representation
\[
\int_{\Omega_R}j_m(\nu^a)\,dpdx
=
\sup_{\psi\in C_c(\Omega_R)}
\left\{
\int_{\Omega_R}\psi\,d\nu-
\int_{\Omega_R}j_m^*(\psi)\,dpdx
\right\},
\]
valid for nonnegative measures $\nu$, because the singular part contributes the recession term
$(j_m)_\infty\nu^s(\Omega_R)=0$.

We now let $m\to\infty$. Since
\[
j_m(s)-j(s)=(h(s)-m)_+,
\]
and $h(s)/s\to0$ as $s\to\infty$, for every $\varepsilon>0$ there exists $m_\varepsilon>0$ such
that
\[
(h(s)-m)_+\leq\varepsilon s,
\qquad s\geq0,
\qquad m\geq m_\varepsilon.
\]
Therefore
\[
\sup_n\iint_{\Omega_R}(j_m(F_n^a)-j(F_n^a))\,dpdx
\leq \varepsilon M.
\]
Letting $m\to\infty$ in the preceding local lower semicontinuity inequality gives
\[
\iint_{\Omega_R}j(F_0^a)\,dpdx
\leq
\liminf_{n\to\infty}\iint_{\Omega_R}j(F_n^a)\,dpdx.
\]

It remains to remove the restriction to $\Omega_R$. By Lemma \ref{lem-decay}, with
$a=|p|+\sqrt{\Phi_+(x)}$,
\[
h(F_n^a)
\leq
\bigl(2+|p|+\sqrt{\Phi_+(x)}\bigr)F_n^a
+2e^{-\frac{|p|+\sqrt{\Phi_+(x)}}2}.
\]
On $\Omega_R^c=\{W\geq R\}$ we have
\[
2+|p|+\sqrt{\Phi_+(x)}\leq C\sqrt{W(x,p)}\leq \frac{C}{\sqrt R}W(x,p).
\]
Consequently,
\[
\sup_n\iint_{\Omega_R^c}
\bigl(2+|p|+\sqrt{\Phi_+(x)}\bigr)\,dF_n
\leq
\frac{C}{\sqrt R}
\sup_n\iint_{\R^6}W\,dF_n
\longrightarrow0
\]
as $R\to\infty$. Moreover,
\[
\iint_{\Omega_R^c}e^{-\frac{|p|+\sqrt{\Phi_+(x)}}2}\,dpdx
\longrightarrow0
\]
by dominated convergence and the confinement assumption
$\int_{\R^3}e^{-\sqrt{\Phi_+}/2}\,dx<\infty$. Thus
\[
\lim_{R\to\infty}\sup_n\iint_{\Omega_R^c}h(F_n^a)\,dpdx=0.
\]
The same estimate also holds with $F_0$ in place of $F_n$.

Given $\varepsilon>0$, choose an admissible $R$ so large that
\[
\sup_n\iint_{\Omega_R^c}h(F_n^a)\,dpdx\leq\varepsilon.
\]
Since $j=-h$, we have
\[
\iint_{\R^6}j(F_n^a)\,dpdx
\geq
\iint_{\Omega_R}j(F_n^a)\,dpdx-\varepsilon.
\]
Taking the lower limit and using the local lower semicontinuity gives
\[
\liminf_{n\to\infty}\iint_{\R^6}j(F_n^a)\,dpdx
\geq
\iint_{\Omega_R}j(F_0^a)\,dpdx-\varepsilon.
\]
Finally, since $h(F_0^a) \geq 0$,
\[
\iint_{\Omega_R}j(F_0^a)\,dpdx
=
\iint_{\R^6}j(F_0^a)\,dpdx
+
\iint_{\Omega_R^c}h(F_0^a)\,dpdx
\geq
\iint_{\R^6}j(F_0^a)\,dpdx.
\]
Therefore
\[
\liminf_{n\to\infty}\iint_{\R^6}j(F_n^a)\,dpdx
\geq
\iint_{\R^6}j(F_0^a)\,dpdx-\varepsilon.
\]
Letting $\varepsilon\downarrow0$ proves the entropy lower semicontinuity.
\end{proof}

\subsection{Existence of minimizers}
\begin{proof}[Proof of Proposition \ref{prop-existence-minimizer}]
Let $\{F_n\}\subset\mathcal A$ be a minimizing sequence. Since $\Phi_-$ is bounded and
\eqref{decay} holds, for every $\varepsilon>0$ there exists $C_\varepsilon>0$ such that
\[
2+|p|+\sqrt{\Phi_+(x)}
\leq \varepsilon\left(\frac{|p|^2}{2}+\Phi_+(x)\right)+C_\varepsilon.
\]
Hence
\[
H(F_n)\geq
-\varepsilon\iint_{\R^6}\left(\frac{|p|^2}{2}+\Phi_+(x)\right)\,dF_n-C_\varepsilon M-C_T,
\]
where
\[
C_T=2\iint_{\R^6}e^{-\frac{|p|+\sqrt{\Phi_+(x)}}2}\,dpdx<\infty.
\]
Choosing $\varepsilon>0$ sufficiently small and using that $\mathcal F(F_n)$ is bounded from above,
we obtain
\[
\sup_n\iint_{\R^6}\left(\frac{|p|^2}{2}+\Phi_+(x)\right)\,dF_n<\infty,
\qquad
\sup_n\|\nabla V_{F_n}\|_2<\infty.
\]
By tightness, after passing to a subsequence, $F_n$ converges weakly-* in $\mathcal M(\R^6)$ to some
$F_0$ with mass $M$. Lemma \ref{lsc} gives $F_0\in\mathcal A$ and
\[
\mathcal F(F_0)\leq\liminf_{n\to\infty}\mathcal F(F_n)=\mathcal F_{\min}.
\]
Thus $F_0$ is a minimizer.

Before proving uniqueness, observe that every minimizer has its singular part at zero momentum.
Indeed, write $F_0=f_0\,dxdp+\sigma_0$ and let $\rho_{\sigma_0}$ be the spatial projection of
$\sigma_0$. Replacing $\sigma_0$ by
$\rho_{\sigma_0}\otimes\delta_{p=0}$ preserves the mass, external energy, Coulomb energy, and
entropy, while it changes the kinetic energy by
\[
 -\iint_{\R^6}\frac{|p|^2}{2}\,d\sigma_0(x,p)\leq0.
\]
Minimality forces this integral to vanish. Hence
\[
 \sigma_0=\rho_{\sigma_0}\otimes\delta_{p=0}.
\]

We next prove uniqueness. Let $F_1$ and $F_2$ be two minimizers and set
$F_\theta=\theta F_1+(1-\theta)F_2$, $0<\theta<1$. The admissible class is convex. The linear
part of the energy is affine, the entropy part is convex because $j=-h$ is strictly convex on the
absolutely continuous density, and the Coulomb term is convex because
\[
\|\nabla V_{F_\theta}\|_2^2
=\|\theta\nabla V_{F_1}+(1-\theta)\nabla V_{F_2}\|_2^2.
\]
Since both endpoints minimize, equality must hold in all convexity inequalities. Equality in the
Coulomb term gives $\nabla V_{F_1}=\nabla V_{F_2}$ and hence $\rho_{F_1}=\rho_{F_2}$. Equality in
the strictly convex entropy term gives $F_1^a=F_2^a$ a.e. The preceding zero-momentum observation
applies to both singular parts. Since the total spatial densities and the absolutely continuous
parts agree, their singular spatial projections agree as well, and therefore $F_1=F_2$.

Finally assume that $\Phi$ is radial. If $R\in SO(3)$, the push-forward of $F_0$ by
$(x,p)\mapsto(Rx,Rp)$ is again admissible and has the same free energy, because the kinetic energy,
the external potential, the entropy, and the Coulomb energy are all invariant under this simultaneous
rotation. By uniqueness, this push-forward coincides with $F_0$. This proves radiality.
\end{proof}

\begin{prop}\label{prop-ac-nonzero}
For any minimizer $F_0$ of \eqref{mp-boson} with $T>0$, the absolutely continuous part $F_0^a$ does not vanish.
\end{prop}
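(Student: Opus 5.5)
We argue by contradiction. Suppose $F_0^a=0$ a.e., so that $F_0=F_0^s=\rho_0\otimes\delta_{p=0}$ by the zero-momentum observation in the proof of Proposition~\ref{prop-existence-minimizer}. Here $\rho_0=\rho_{F_0}$ has mass $M$ and finite Coulomb energy, and the entropy vanishes: $H(F_0)=0$. We will build a competitor that moves a small amount $\varepsilon$ of mass from the condensate into a smooth thermal density. This costs $O(\varepsilon)$ in energy but gains an entropy term of order $\varepsilon\log(1/\varepsilon)$.

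\textbf{Construction.} Fix a nonnegative $g\in C_c^\infty(\R^6)$ with $\iint g=1$, supported in $\{|x|\le R_0,\ |p|\le 1\}$, and for $0<\varepsilon<1$ set
\[
 F_\varepsilon:=(1-\varepsilon)F_0+\varepsilon g\,dxdp .
\]
Then $F_\varepsilon\geq0$ and $F_\varepsilon(\R^6)=M$. Its kinetic and $\Phi_+$ moments are finite, and $V_{F_\varepsilon}=(1-\varepsilon)V_{F_0}+\varepsilon V_g$ lies in $\dot H^1$ because $V_g\in\dot H^1$ (Proposition~\ref{prop-coulomb-energy}). Hence $F_\varepsilon\in\mathcal A$, with $F_\varepsilon^a=\varepsilon g$.

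\textbf{Energy cost.} The linear part of $\mathcal E$ is affine in $\varepsilon$. The Coulomb term
\[
 \tfrac1{8\pi}\|(1-\varepsilon)\nabla V_{F_0}+\varepsilon\nabla V_g\|_2^2
\]
is a quadratic polynomial in $\varepsilon$ with finite coefficients, by Cauchy--Schwarz. Therefore $\mathcal E(F_\varepsilon)\le\mathcal E(F_0)+C\varepsilon$ for $\varepsilon\le1$, where $C$ depends only on $F_0$ and $g$.

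\textbf{Entropy gain.} We have
\[
 TH(F_\varepsilon)=-T\iint h(\varepsilon g)\,dxdp .
\]
Since $h(s)\ge -s\log s$ (because $(1+s)\log(1+s)\ge0$), it follows that
\[
 \iint h(\varepsilon g)\ge -\iint \varepsilon g\log(\varepsilon g)
 =\varepsilon\log(1/\varepsilon)-\varepsilon\iint g\log g ,
\]
and the last integral is finite because $g$ is bounded with compact support. Consequently
\[
 \mathcal F_T(F_\varepsilon)\le \mathcal F_T(F_0)+C'\varepsilon-T\varepsilon\log(1/\varepsilon).
\]
This is strictly less than $\mathcal F_T(F_0)$ for $\varepsilon$ small, which contradicts minimality.

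The only step needing any care is the Coulomb cross term $\int\nabla V_{F_0}\cdot\nabla V_g$. It is finite because both potentials are in $\dot H^1$, so the argument involves no real obstacle; the decisive point is simply that the entropy has infinite slope at zero ($h'(0^+)=\infty$), while every energy term has finite derivative along the convex perturbation.
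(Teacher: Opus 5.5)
Your argument is essentially the paper's: you perturb the purely singular minimizer by a convex combination with a smooth compactly supported density, observe that every energy term moves by $O(\varepsilon)$, and use the infinite slope of $h$ at $0$ to gain a term of order $T\varepsilon\log(1/\varepsilon)$ times the added mass.

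There is one slip that breaks the argument as written. With $\iint g\,dxdp=1$ you get $F_\varepsilon(\R^6)=(1-\varepsilon)M+\varepsilon$, which equals $M$ only when $M=1$. So in general $F_\varepsilon\notin\mathcal A_M$, and no contradiction with minimality over $\mathcal A_M$ follows.

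The repair is immediate. Either normalize $\iint g\,dxdp=M$, as the paper does, so that the gain becomes $T\varepsilon M\log(1/\varepsilon)$. Or keep $\iint g=1$ and use the competitor $(1-\varepsilon/M)F_0+\varepsilon g\,dxdp$. In both cases your energy bound and your estimate via $h(s)\ge -s\log s$ go through unchanged.

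That one-sided lower bound on $h$ is slightly tidier than the paper's two-sided expansion $H(F_\varepsilon)=\varepsilon M\log\varepsilon+O(\varepsilon)$, because it avoids estimating the $(1+s)\log(1+s)$ term.
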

\begin{proof}
Assume by contradiction that $F_0^a=0$. Choose a nonnegative smooth compactly supported density
$f\in C_c^\infty(\R^6)$ with $\iint f\,dpdx=M$ and finite potential energy, and set
\[
F_\varepsilon=(1-\varepsilon)F_0+\varepsilon f\,dpdx,
\qquad 0<\varepsilon<1.
\]
Then $F_\varepsilon\in\mathcal A$. The kinetic, external, and Coulomb parts change by $O(\varepsilon)$.
On the other hand,
\[
H(F_\varepsilon)=\iint \left(\varepsilon f\log(\varepsilon f)-(1+\varepsilon f)\log(1+\varepsilon f)\right)\,dpdx
=\varepsilon M\log\varepsilon+O(\varepsilon).
\]
Since $T>0$, the term $T\varepsilon M\log\varepsilon$ is negative and dominates the $O(\varepsilon)$
terms as $\varepsilon\downarrow0$. Hence $\mathcal F(F_\varepsilon)<\mathcal F(F_0)$ for
sufficiently small $\varepsilon$, contradicting the minimality of $F_0$.
\end{proof}

\begin{lemma}\label{lem-Fmin-strict-T}
The minimum free-energy level $\mathcal F_{\min}(T)$ is strictly decreasing in $T>0$.
\end{lemma}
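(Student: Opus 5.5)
Fix $0<T_1<T_2$ and let $F_2$ be the unique minimizer at temperature $T_2$, given by Proposition~\ref{prop-existence-minimizer}. The plan is to use $F_2$ as a competitor at temperature $T_1$. The admissible class $\mathcal A$ does not depend on $T$, and $\mathcal F_T(F)=\mathcal E(F)+TH(F)$ is affine in $T$ with slope $H(F)$. Hence
\[
 \mathcal F_{\min}(T_2)=\mathcal F_{T_2}(F_2)=\mathcal F_{T_1}(F_2)+(T_2-T_1)H(F_2)
 \geq \mathcal F_{\min}(T_1)+(T_2-T_1)H(F_2).
\]
Strict decrease therefore reduces to showing $H(F_2)<0$.

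For this we use Proposition~\ref{prop-ac-nonzero}: the absolutely continuous part $F_2^a$ of the minimizer at $T_2>0$ does not vanish. So $F_2^a>0$ on a set of positive Lebesgue measure in $\R^6$. Since $h(s)=(1+s)\log(1+s)-s\log s>0$ for every $s>0$ (as $h(0)=0$ and $h'(s)=\log(1+1/s)>0$), we get
\[
 H(F_2)=-\iint_{\R^6}h(F_2^a)\,dpdx<0.
\]
This integral is finite, because $F_2\in\mathcal A$ has a finite moment and Lemma~\ref{lem-decay} bounds $\iint h(F_2^a)$ by $C\iint(1+|p|^2/2+\Phi_+)\,dF_2+C_T<\infty$. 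Combining this with the displayed inequality gives $\mathcal F_{\min}(T_2)<\mathcal F_{\min}(T_1)$.

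There is no real obstacle here. The only points to check are that $\mathcal F_{\min}(T_1)$ is finite, so the inequality is meaningful (it is attained by a minimizer), and that the entropy of $F_2$ is strictly negative. The latter is exactly where Proposition~\ref{prop-ac-nonzero} is needed: a purely singular state would have $H=0$ and would give only monotonicity, not strict monotonicity.
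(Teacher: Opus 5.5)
There is a genuine gap: your displayed inequality points the wrong way. Using $F_2$ as a competitor at temperature $T_1$ gives
\[
\mathcal F_{\min}(T_2)\geq \mathcal F_{\min}(T_1)+(T_2-T_1)H(F_2),
\]
which is a \emph{lower} bound on $\mathcal F_{\min}(T_2)$. Since $H(F_2)<0$, the right-hand side lies strictly below $\mathcal F_{\min}(T_1)$, so it says nothing about the sign of $\mathcal F_{\min}(T_2)-\mathcal F_{\min}(T_1)$. Rearranged, it only gives $\mathcal F_{\min}(T_1)-\mathcal F_{\min}(T_2)\leq (T_2-T_1)|H(F_2)|$, which bounds how fast the level can decrease. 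Your closing remark shows the same confusion. With $H(F_2)=0$, your inequality would yield $\mathcal F_{\min}(T_2)\geq\mathcal F_{\min}(T_1)$, which is the opposite of monotone decrease.

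The fix is to swap the roles of the two temperatures, which is what the paper does. Take the minimizer $F_1$ at the \emph{lower} temperature $T_1$ and test it at $T_2$:
\[
\begin{aligned}
\mathcal F_{\min}(T_2)&\leq \mathcal F_{T_2}(F_1)=\mathcal F_{T_1}(F_1)+(T_2-T_1)H(F_1)\\
&=\mathcal F_{\min}(T_1)+(T_2-T_1)H(F_1)<\mathcal F_{\min}(T_1).
\end{aligned}
\]
The fact $H(F_1)<0$ follows exactly as in your second paragraph, from Proposition~\ref{prop-ac-nonzero} applied at $T_1$ and the positivity of $h$ on $(0,\infty)$. Your finiteness check of the entropy via Lemma~\ref{lem-decay} carries over unchanged. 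Only the choice of which minimizer serves as the competitor must be reversed.
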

\begin{proof}
Let $0<T_1<T_2$ and let $F_1$ be a minimizer at temperature $T_1$. By Proposition
\ref{prop-ac-nonzero}, $F_1^a\not\equiv0$, and hence
\[
H(F_1)=\iint\left(F_1^a\log F_1^a-(1+F_1^a)\log(1+F_1^a)\right)\,dpdx<0.
\]
Therefore
\[
\mathcal F_{\min}(T_2)\leq \mathcal F_{T_2}(F_1)
=\mathcal F_{T_1}(F_1)+(T_2-T_1)H(F_1)
<\mathcal F_{\min}(T_1).
\]
\end{proof}

\subsection{First variation in the measure class}

The characterization of the singular part of the free-energy minimizer
will lead to an obstacle problem. We record here the local regularity
result for the classical obstacle problem that will be used below.

\begin{prop}[{\cite{CaffarelliSalsa2005,Figalli2018Obstacle,PSU2012}}]
\label{P.obstacle}
Let $\Omega\subset\R^d$ be open, $\varphi\in W^{2,\infty}_{\rm loc}(\Omega)$,
and $g\in L^\infty_{\rm loc}(\Omega)$. Suppose that
$v\in H^1_{\rm loc}(\Omega)$ satisfies $v\geq\varphi$ q.e.\ in $\Omega$ and,
for every ball $B\Subset\Omega$,
\[
\int_B \nabla v\cdot\nabla(w-v)\,dx
-
\int_B g(w-v)\,dx
\geq0
\]
for every $w\in v+H_0^1(B)$ satisfying $w\geq\varphi$ q.e.\ in $B$.
Then the Lewy--Stampacchia inequality
\[
0\leq-\Delta v-g
\leq(-\Delta\varphi-g)_+
\qquad\text{a.e. in }\Omega
\]
holds. Consequently, $v\in W^{2,p}_{\rm loc}(\Omega)$ for every $1<p<\infty$.

If, in addition, $g\in C^{0,\alpha}_{\rm loc}(\Omega)$ for some
$\alpha>0$ and $\varphi\in C^{1,1}_{\rm loc}(\Omega)$, then $v\in C^{1,1}_{\rm loc}(\Omega)$.
\end{prop}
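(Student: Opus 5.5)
Since this is the classical regularity theory for the obstacle problem, I would follow the standard route of Lewy--Stampacchia via penalization, then Calder\'on--Zygmund, then a quadratic-growth argument at the free boundary. Everything is local, so I fix a ball $B\Subset\Omega$. By hypothesis, $v|_B$ is the unique minimizer of $w\mapsto \frac12\int_B|\nabla w|^2-\int_B gw$ over the closed convex set $K:=\{w\in v+H^1_0(B): w\geq\varphi \text{ q.e.}\}$; uniqueness follows from strict convexity. Testing with $w=v+\psi$, $\psi\in C_c^\infty(B)$, $\psi\ge0$, gives $\mu:=-\Delta v-g\geq0$ as a distribution, hence as a Radon measure. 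This is the lower Lewy--Stampacchia bound.

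For the upper bound I would penalize. Set $f:=(-\Delta\varphi-g)_+\in L^\infty(B)$ and fix a smooth nonincreasing $\theta_\e$ with $\theta_\e(t)=1$ for $t\le0$ and $\theta_\e(t)=0$ for $t\ge\e$. Let $v_\e\in v+H^1_0(B)$ solve
\[
-\Delta v_\e-g=f\,\theta_\e(v_\e-\varphi) \quad \text{in } B.
\]
Such a solution exists by monotone operator theory, since the nonlinearity is bounded and nonincreasing in $v_\e$.

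First, $v_\e\ge\varphi$. On $\{v_\e<\varphi\}$ we have $-\Delta(v_\e-\varphi)=f+\Delta\varphi+g\ge0$. Testing with $(v_\e-\varphi)_-$, which lies in $H^1_0(B)$ because the boundary data $v$ is $\ge\varphi$, gives the claim.

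Second, $0\le -\Delta v_\e-g\le f$, directly from the equation.

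Third, I need uniform $H^1$ bounds and the convergence $v_\e\to v$. For this I would show that any weak $H^1$ limit $\bar v$ lies in $K$ and satisfies the variational inequality. The key point is that $\theta_\e(v_\e-\varphi)$ vanishes where $v_\e\ge\varphi+\e$. Uniqueness of the solution of the variational inequality then forces $\bar v=v$.

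Passing to the limit in the second step, in the weak-$*$ $L^\infty$ sense, yields $0\le-\Delta v-g\le f$ a.e. in $B$. Since $\Delta v\in L^\infty_{\rm loc}$, interior Calder\'on--Zygmund estimates give $v\in W^{2,p}_{\rm loc}$ for all $p<\infty$.

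For the $C^{1,1}$ statement I would first remove $g$. Let $N$ be the Newtonian potential of $g\chi_{B}$. Since $g\in C^{0,\alpha}$, Schauder theory gives $N\in C^{2,\alpha}_{\rm loc}$. Then $u:=v-N$ solves the obstacle problem with zero right-hand side and obstacle $\psi:=\varphi-N\in C^{1,1}_{\rm loc}$.

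Set $w:=u-\psi\ge0$. By the previous step $w\in C^{1,\beta}$, $u$ is harmonic in $\{w>0\}$, and $\nabla w=0$ on the contact set. The key estimate is quadratic growth at free-boundary points: for $x_0\in\partial\{w>0\}$,
\[
\sup_{B_r(x_0)}w\le Cr^2 .
\]
I would obtain it by comparing $u$ with $\psi(x_0)+\nabla\psi(x_0)\cdot(x-x_0)\pm\|D^2\psi\|_\infty|x-x_0|^2$. Since $-\Delta u\ge0$ and $-\Delta u\le(-\Delta\psi)_+\in L^\infty$, the function $u$ minus a suitable quadratic is subharmonic, and Harnack's inequality applied to $w+C|x-x_0|^2$ bounds its supremum by its infimum on $B_r(x_0)$, which is $O(r^2)$.

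Given quadratic growth, the $C^{1,1}$ bound follows in the usual way. At points of $\{w>0\}$ at distance $d$ from the free boundary, interior harmonic estimates for $u$ on $B_d$ give $|D^2u|\le C$. On the interior of the contact set we have $D^2u=D^2\psi$. A covering argument patches these together into $u\in C^{1,1}_{\rm loc}$, and hence $v=u+N\in C^{1,1}_{\rm loc}$.

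I expect the quadratic growth step to be the main obstacle, because $\psi$ is only $C^{1,1}$ and not $C^2$. If the Harnack argument is delicate there, I would instead use the Lewy--Stampacchia bound, which gives $|\Delta u|\le C$ and hence $|\Delta w|\le C$. Then $w\ge0$ with $w(x_0)=|\nabla w(x_0)|=0$ and $\Delta w$ bounded, and quadratic growth follows from a Caffarelli-type $L^\infty$ estimate for such functions.
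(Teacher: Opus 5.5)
Your proposal is correct and follows the same outline as the paper. For the $W^{2,p}$ statement both use Lewy--Stampacchia plus Calder\'on--Zygmund, and for the $C^{1,1}$ statement both use the same reduction: subtract a local solution of $-\Delta z=g$ to obtain a zero-source obstacle problem with $C^{1,1}$ obstacle. The only difference is that the paper cites the Lewy--Stampacchia inequality and Figalli's optimal-regularity theorem, whereas you sketch their standard proofs. Two small points in your sketch need tightening. In the quadratic-growth step your fallback is the clean version: $w\geq0$, $|\Delta w|\leq C$ and $w(x_0)=0$ give $\sup_{B_{r/2}(x_0)}w\leq Cr^2$ by the Harnack inequality with bounded right-hand side, whereas mere subharmonicity would not suffice. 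Patching across the free boundary is best done via $D^2w=0$ a.e.\ on $\{w=0\}$, which follows from $w\in W^{2,p}_{\rm loc}$.
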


\begin{proof}
The Lewy--Stampacchia inequality and the resulting local
$W^{2,p}$ regularity are standard; see, for instance,
Caffarelli--Salsa~\cite{CaffarelliSalsa2005} and
Petrosyan--Shahgholian--Uraltseva~\cite{PSU2012}.
For the last assertion, locally solve $-\Delta z=g$.
Then $z\in C^{2,\alpha}_{\rm loc}$, and $v-z$ solves the zero-source
obstacle problem with obstacle $\varphi-z\in C^{1,1}_{\rm loc}$.
The classical optimal-regularity theorem
\cite[Theorem~4.1]{Figalli2018Obstacle} therefore yields
$v-z\in C^{1,1}_{\rm loc}$, and hence $v\in C^{1,1}_{\rm loc}$.
\end{proof}

\begin{proof}[Proof of Theorem~\ref{prop-EL-minimizer}]
Write
\[
 F_0=f\,dxdp+\sigma,
 \qquad f=F_0^a,
 \qquad \sigma=F_0^s,
\]
and set
\[
 e_0(x,p):=\frac{|p|^2}{2}+\Phi(x)+\widetilde V_{F_0}(x).
\]
All pointwise statements involving \(\widetilde V_{F_0}\) are understood through the
quasi-continuous representative. Since \(F_0\in\mathcal A\), its spatial density has finite Coulomb
energy. Hence it charges no set of zero Newtonian capacity. The same is true for the spatial
projection of every positive measure dominated by \(F_0\).

We first record the first-variation identities that will be used repeatedly. Let \(\mu\) be a signed
finite measure on phase space, with zero total mass and with spatial marginal \(\rho_\mu\) of finite
Coulomb energy, such that \(F_0+\varepsilon\mu\) is nonnegative for one-sided or two-sided small
\(\varepsilon\). Then
\[
V_{F_0+\varepsilon\mu}=V_{F_0}+\varepsilon V_\mu
\]
in \(\dot H^1\), and therefore
\[
\frac1{8\pi}\int_{\R^3} |\nabla V_{F_0+\varepsilon\mu}|^2  dx
=
\frac1{8\pi}\int_{\R^3} |\nabla V_{F_0}|^2 dx
+
\varepsilon\int_{\R^3} \widetilde V_{F_0}\,d\rho_\mu
+
\frac{\varepsilon^2}{8\pi}\int_{\R^3} |\nabla V_\mu|^2 dx.
\]
Consequently, the first variation of the total energy in the direction \(\mu\) is
\[
\int_{\R^6} e_0\,d\mu.
\]
For absolutely continuous variations, we shall also use $j$ defined in \eqref{jdef}, with
$j'(s)=\log\frac{s}{1+s}$ for $s>0$. Thus, \(H(F)=\iint j(F^a)\,dpdx\).

We next show that the regular density is strictly positive a.e. Suppose that \(f=0\) on a set of
positive Lebesgue measure. Since \(e_0\) is finite a.e., there exists a bounded measurable set
\(A\subset\{f=0\}\) of positive measure on which \(e_0\) is bounded. By Proposition
\ref{prop-ac-nonzero}, \(f\not\equiv0\). Hence we can choose a bounded compactly supported density
\(b\ge0\), supported in a bounded set on which
\(\delta\leq f\leq\delta^{-1}\) and \(|e_0|\leq R\) for some \(\delta,R>0\), such that
\[
0\le b\le f,
\qquad
m:=\iint b\,dpdx>0.
\]
Let \(a=m|A|^{-1}{\bf 1}_A\). For all sufficiently small \(\varepsilon>0\),
\[
 F_\varepsilon=F_0+\varepsilon a\,dxdp-\varepsilon b\,dxdp
\]
is admissible and has the same mass as \(F_0\). The energy variation is \(O(\varepsilon)\). The entropy
change on the set \(A\), where the original regular density is zero, is
\[
T\int_A j(\varepsilon a)\,dpdx
=Tm\varepsilon\log\varepsilon+O(\varepsilon),
\]
whereas the entropy change coming from removing \(\varepsilon b\) is only \(O(\varepsilon)\), because
\(b\) is supported where \(f\) is bounded away from zero. Hence
\(\mathcal F_T(F_\varepsilon)<\mathcal F_T(F_0)\) for small \(\varepsilon>0\), a contradiction. Thus
\[
 f>0\qquad\text{a.e. in }\R^6.
\]

We now derive the Euler--Lagrange equation on the absolutely continuous part. Define the nested
measurable sets
\[
 D_n:=\left\{(x,p): |x|+|p|\leq n,\quad n^{-1}\leq f(x,p)\leq n,\quad
 |e_0(x,p)|\leq n\right\}.
\]
They exhaust $\R^6$ up to a null set because $0<f<\infty$ and $|e_0|<\infty$ a.e. If
$\psi\in L^\infty(\R^6)$ is supported in $D_n$ and has zero integral, then, for sufficiently small
$|\varepsilon|$, the two-sided variation
\[
 F_\varepsilon=(f+\varepsilon\psi)\,dxdp+\sigma
\]
is admissible. Since $j'$ is bounded on $[n^{-1}/2,2n]$, differentiation under the integral is
justified and gives
\[
 \iint_{D_n}\left(e_0(x,p)+Tj'(f(x,p))\right)\psi(x,p)\,dpdx=0.
\]
It follows from the elementary zero-mean test-function characterization of constants that
$e_0+Tj'(f)$ is constant a.e. on every $D_n$ of positive measure. The constants agree on nested
sets, and the exhaustion therefore gives a constant \(c\in\R\) such that
\[
 e_0(x,p)+Tj'(f(x,p))+c=0
 \qquad\text{for a.e. }(x,p)\in\R^6.
\]
Solving this equation yields
\[
 f(x,p)=
\frac{1}{\exp\left(\frac1T\left(e_0(x,p)+c\right)\right)-1}
=
\frac{1}{\exp\left(\frac1T\left(\frac{|p|^2}{2}+\Phi(x)+\widetilde V_{F_0}(x)+c\right)\right)-1}
\]
for a.e. \((x,p)\).

We now prove the obstacle inequality. Define
\[
 u_0(x):=\Phi(x)+\widetilde V_{F_0}(x)+c.
\]
Assume, to the contrary, that \(\{u_0<0\}\) has positive Newtonian capacity. Since \(u_0\) is
quasi-continuous, there are \(\delta>0\) and a compact set
\(K\subset\{u_0\le -\delta\}\) with positive capacity. By the capacitary characterization, there is a
nonzero finite positive measure \(\nu_x\) supported on \(K\) with finite Coulomb energy. We scale
\(\nu_x\) so that its total mass is \(m>0\), where \(m\) is small enough that one can choose a bounded
compactly supported density \(b\), supported in some $D_n$, satisfying
\[
0\le b\le f,
\qquad
\iint_{\R^6}b\,dpdx=m.
\]
Set \(\nu=\nu_x\otimes\delta_{p=0}\). For \(0<\varepsilon\ll1\),
\[
 F_\varepsilon=F_0+\varepsilon\nu-\varepsilon b\,dxdp
\]
is admissible. Since the singular direction has no entropy contribution and the absolutely
continuous part changes from \(f\) to \(f-\varepsilon b\), Taylor's formula for $j$ on the bounded
range of $f$ over $\operatorname{supp}b$, together with the quadratic Coulomb expansion, gives
\[
\begin{aligned}
\mathcal F_T(F_\varepsilon)-\mathcal F_T(F_0)
&=\varepsilon\int_{\R^6}e_0\,d\nu
 -\varepsilon\iint_{\R^6} e_0 b\,dpdx
 -\varepsilon T\iint_{\R^6}j'(f)b\,dpdx
 +o(\varepsilon)  \\
&=\varepsilon\left(\int_{\R^6}e_0\,d\nu
 -\iint_{\R^6}(e_0+Tj'(f))b\,dpdx\right)+o(\varepsilon).
\end{aligned}
\]
Using \(e_0+Tj'(f)=-c\) and \(\iint b=m=\nu(\R^6)\), this becomes
\[
\mathcal F_T(F_\varepsilon)-\mathcal F_T(F_0)
=\varepsilon\left(\int_{\R^6}e_0\,d\nu+cm\right)+o(\varepsilon).
\]
Because \(\nu\) is supported on \(p=0\),
\[
 \int_{\R^6}e_0\,d\nu+cm
 =\int_K\bigl(\Phi+\widetilde V_{F_0}+c\bigr)\,d\nu_x
 =\int_K u_0\,d\nu_x
 \le -\delta m.
\]
Thus \(\mathcal F_T(F_\varepsilon)<\mathcal F_T(F_0)\) for small \(\varepsilon>0\), contradicting
minimality. Therefore
\[
 u_0\ge0\qquad\text{q.e. in }\R^3.
\]
This proves the obstacle inequality in (i).

We next locate the singular part. Let \(\eta\) be any finite positive measure with \(0\le\eta\le\sigma\),
and let \(m_\eta=\eta(\R^6)\). Choose a nonnegative bounded compactly supported density \(b\) with
\(\iint b\,dpdx=m_\eta\), supported in some $D_n$. Then
for \(0<\varepsilon\ll1\),
\[
 F_\varepsilon=F_0-\varepsilon\eta+\varepsilon b\,dxdp
\]
is admissible. Expanding as before, now with singular mass removed and absolutely continuous mass
added, gives
\[
\begin{aligned}
\mathcal F_T(F_\varepsilon)-\mathcal F_T(F_0)
&=-\varepsilon\int_{\R^6}e_0\,d\eta
 +\varepsilon\iint_{\R^6}e_0 b\,dpdx
 +\varepsilon T\iint_{\R^6}j'(f)b\,dpdx
 +o(\varepsilon) \\
&=\varepsilon\left(-\int_{\R^6}e_0\,d\eta
 +\iint_{\R^6}(e_0+Tj'(f))b\,dpdx\right)+o(\varepsilon) \\
&=\varepsilon\left(-\int_{\R^6}e_0\,d\eta-cm_\eta\right)+o(\varepsilon)
=-\varepsilon\int_{\R^6}(e_0+c)\,d\eta+o(\varepsilon).
\end{aligned}
\]
Minimality implies
\[
0\le -\int_{\R^6}(e_0+c)\,d\eta.
\]
On the other hand, the spatial projection of \(\eta\) charges no capacity-zero set, and \(u_0\ge0\)
q.e. Therefore
\[
 e_0(x,p)+c=\frac{|p|^2}{2}+u_0(x)\ge0
 \qquad \eta\text{-a.e.}
\]
The last two inequalities force
\[
 e_0(x,p)+c=0\qquad \eta\text{-a.e.}
\]
Since \(\eta\le\sigma\) was arbitrary, we obtain
\[
 e_0(x,p)+c=0\qquad \sigma\text{-a.e.}
\]
Both terms \(|p|^2/2\) and \(u_0(x)\) are nonnegative \(\sigma\)-a.e.; hence both vanish
\(\sigma\)-a.e. Thus \(p=0\) \(\sigma\)-a.e. and \(u_0=0\) for the spatial projection of \(\sigma\).
Consequently
\[
 F_0^s=\rho_0^s\otimes\delta_{p=0},
 \qquad
 u_0=0\quad \rho_0^s\text{-a.e.}
\]
This proves (ii), except for compactness of the support. Since \(u_0=0\) \(\rho_0^s\)-a.e. and
\(V_{F_0}\ge0\), we have \(\Phi\le -c\) \(\rho_0^s\)-a.e. The set \(\{x:\Phi(x)>-c\}\) is open and has zero
\(\rho_0^s\)-measure; hence no point of \(\operatorname{supp}\rho_0^s\) lies in it. Therefore
\[
 \operatorname{supp}\rho_0^s\subset\{x:\Phi(x)\le -c\},
\]
and the right-hand side is compact by the confinement assumption on \(\Phi\).

It remains to prove (iii). Assume $\Phi\in C^2_{\rm loc}(\R^3)$ and $\rho_0^s\neq0$. Define
\[
 \psi:=-\Phi-c
 \qquad\text{and}\qquad
 R_T(s):=\int_{\R^3}
 \frac{1}{\exp\left(\frac1T\left(\frac{|p|^2}{2}+s\right)\right)-1}\,dp,
 \quad s\geq0.
\]
Thus $u_0=V_{F_0}-\psi\geq0$ q.e. and
\[
 \rho_0^a=R_T(u_0)\qquad\text{a.e. in }\R^3.
\]
We first derive the precise obstacle variational inequality. Let $B\Subset\R^3$, and let
$W\in V_{F_0}+H_0^1(B)$ satisfy $W\geq\psi$ q.e. in $B$, with $W$ extended by $V_{F_0}$
outside $B$. Since $0\leq\rho_0^s\leq\rho_{F_0}$, the measure $\rho_0^s$ has finite Coulomb
energy and the pairing with the quasi-continuous representative of $W-V_{F_0}$ is well defined.
The distributional Poisson equation and Proposition~\ref{prop-quasicont} give
\begin{align}
&\int_B\nabla V_{F_0}\cdot\nabla(W-V_{F_0})\,dx
-4\pi\int_B R_T(u_0)(W-V_{F_0})\,dx \notag\\
&\hspace{4cm}
=4\pi\int_B(\widetilde W-\widetilde V_{F_0})\,d\rho_0^s\geq0.
\label{local-obstacle-V}
\end{align}
Indeed, $V_{F_0}=\psi$ $\rho_0^s$-a.e. by (ii), whereas
$\widetilde W\geq\psi$ outside a set of capacity zero, and $\rho_0^s$ does not charge such sets.
Hence $V_{F_0}$ is, on every relatively compact set, the solution of the classical obstacle
problem with obstacle $\psi$ and fixed source
\[
 G_0(x):=4\pi R_T(u_0(x)).
\]

We now apply Proposition~\ref{P.obstacle} to \eqref{local-obstacle-V} with $v=V_{F_0}$, $\varphi=-\Phi-c$, $g=G_0$.
Indeed, $\varphi\in C^2_{\rm loc}(\R^3)\subset
W^{2,\infty}_{\rm loc}(\R^3)$ and
$0\leq G_0\leq4\pi R_T(0)$. Hence the Lewy--Stampacchia inequality gives
\begin{equation}\label{LS-obstacle-V}
0\leq-\Delta V_{F_0}-G_0
\leq\bigl(\Delta\Phi-G_0\bigr)_+
\qquad\text{a.e. locally},
\end{equation}
and
\[
V_{F_0}\in W^{2,p}_{\rm loc}(\R^3)
\qquad\text{for every }1<p<\infty.
\]
Since $-\Delta V_{F_0}-G_0=4\pi\rho_0^s$ as distributions,
\eqref{LS-obstacle-V} also shows that $\rho_0^s$ has a locally bounded
density.

%

We now bootstrap to optimal regularity. Expanding the Bose factor into a geometric series gives
\[
 R_T(s)=(2\pi T)^{3/2}\sum_{k=1}^{\infty}k^{-3/2}e^{-ks/T}.
\]
Using $1-e^{-a}\leq\min\{a,1\}$ and splitting the series at
$k\sim T/|s-t|$, we obtain
\begin{equation}\label{RT-holder}
 |R_T(s)-R_T(t)|\leq C_T|s-t|^{1/2},
 \qquad s,t\geq0.
\end{equation}
Taking $p>3$ in the preceding $W^{2,p}_{\rm loc}$ estimate shows that
$V_{F_0}$, and hence $u_0=\Phi+V_{F_0}+c$, is locally Lipschitz. It follows from
\eqref{RT-holder} that $G_0\in C^{0,1/2}_{\rm loc}$. The local optimal-regularity theorem applied
once more to \eqref{local-obstacle-V} therefore gives
\[
 V_{F_0}\in C^{1,1}_{\rm loc}(\R^3),
 \qquad
 u_0\in C^{1,1}_{\rm loc}(\R^3).
\]
We henceforth use these continuous representatives and set
\[
 \mathcal C:=\{x:u_0(x)=0\},
 \qquad
 \mathcal N:=\{x:u_0(x)>0\}.
\]
The contact condition in (ii) now implies
$\operatorname{supp}\rho_0^s\subset\mathcal C$.

For a $C^{1,1}$ function, its Hessian vanishes a.e. on any level set on which the function is
constant. Consequently,
\[
 D^2u_0=0\qquad\text{a.e. on }\mathcal C.
\]
On $\mathcal C$, we also have $R_T(u_0)=R_T(0)$, whereas $\rho_0^s$ vanishes on the open set
$\mathcal N$. Using
\[
 \Delta u_0=\Delta\Phi-4\pi R_T(u_0)-4\pi\rho_0^s
\]
and the absolute continuity already obtained from \eqref{LS-obstacle-V}, we conclude that, as
measures on all of $\R^3$,
\begin{equation}\label{rho-s-density-contact}
 d\rho_0^s
 =\left(\frac{1}{4\pi}\Delta\Phi-R_T(0)\right){\bf 1}_{\mathcal C}\,dx
 =\left(\frac{1}{4\pi}\Delta\Phi-\rho_0^a\right){\bf 1}_{\mathcal C}\,dx.
\end{equation}
Since $\rho_0^a=R_T(u_0)$ and $R_T$ is continuous on $[0,\infty)$, the function
\[
 g_0:=\frac{1}{4\pi}\Delta\Phi-\rho_0^a
\]
is continuous on $\R^3$. Thus \eqref{rho-s-density-contact} is precisely the global identity 
asserted in (iii). 

It remains to show that $\operatorname{int}\mathcal C$ is nonempty. We prove the stronger fact that
$\operatorname{int}\operatorname{supp}\rho_0^s$ is nonempty. Set
\[
 q(x):=\Delta\Phi(x)-4\pi R_T(0),
 \qquad
 \Omega_+:=\{x:q(x)>0\}.
\]
Since $\rho_0^s\neq0$, \eqref{rho-s-density-contact} implies that
\[
 E:=\mathcal C\cap\Omega_+
\]
has positive Lebesgue measure. We claim that
\begin{equation}\label{local-free-boundary-null}
 |\partial\mathcal N\cap\Omega_+|=0.
\end{equation}
Only this nondegenerate part of the free boundary is needed; no assertion is made here about the
possibly degenerate set $\partial\mathcal N\cap\{q=0\}$.

To prove \eqref{local-free-boundary-null}, fix $K\Subset\Omega_+$. On a neighborhood of $K$ we have
$q\geq\delta>0$. In $\mathcal N$ the singular density vanishes, and hence
\[
 \Delta u_0=\Delta\Phi-4\pi R_T(u_0)
 \geq\Delta\Phi-4\pi R_T(0)=q\geq\delta.
\]
The standard maximum-principle argument gives the quadratic nondegeneracy estimate
\begin{equation}\label{quadratic-nondegeneracy}
 \sup_{B_r(x_0)}u_0\geq\frac{\delta}{6}r^2
\end{equation}
for $x_0\in\partial\mathcal N\cap K$ and all sufficiently small $r$. Indeed, for a point
$x\in\mathcal N$ one applies the maximum principle to
$u_0(y)-\delta|y-x|^2/6$ on
$B_r(x)\cap\mathcal N$, and then lets $x\to x_0$.

Let $L$ be a local Lipschitz constant for $\nabla u_0$. Since
$u_0(x_0)=0$, $u_0\geq0$, and $u_0\in C^1$, we have $\nabla u_0(x_0)=0$.
Applying \eqref{quadratic-nondegeneracy} in $B_{r/2}(x_0)$, we find
$y\in B_{r/2}(x_0)$ such that
\[
 u_0(y)\geq\frac{\delta}{24}r^2.
\]
With
\[
 a:=\min\left\{\frac14,\frac{\delta}{48(1+L)}\right\},
\]
the Lipschitz bound for $\nabla u_0$ gives
\[
 B_{ar}(y)\subset\mathcal N\cap B_r(x_0).
\]
Thus the positivity set occupies a fixed positive proportion of every sufficiently small ball
centered at a point of $\partial\mathcal N\cap K$. Such a point cannot be a Lebesgue density point
of $\mathcal C$. Since $\partial\mathcal N\subset\mathcal C$, the Lebesgue density theorem gives
$|\partial\mathcal N\cap K|=0$. Exhausting $\Omega_+$ by compact sets proves
\eqref{local-free-boundary-null}.

Because
$\mathcal C\setminus\operatorname{int}\mathcal C=\partial\mathcal N$, the set
$E\cap\operatorname{int}\mathcal C$ has positive measure. Choose
$x_0\in E\cap\operatorname{int}\mathcal C$. By continuity of $\Delta\Phi$, there exists $r>0$
such that
\[
 B_r(x_0)\subset\operatorname{int}\mathcal C\cap\Omega_+.
\]
The density in \eqref{rho-s-density-contact} is strictly positive on this ball, so
$B_r(x_0)\subset\operatorname{supp}\rho_0^s$. Therefore
\[
 \operatorname{int}\operatorname{supp}\rho_0^s\neq\varnothing.
\]
Since $\operatorname{supp}\rho_0^s\subset\mathcal C$, this also proves
$\operatorname{int}\mathcal C\neq\varnothing$.

Finally, restricting \eqref{rho-s-density-contact} to $\operatorname{int}\mathcal C$ and adding the
absolutely continuous and condensed spatial densities gives
\[
 \rho_{F_0}=\frac{1}{4\pi}\Delta\Phi
\]
in $\mathcal D'(\operatorname{int}\mathcal C)$. This completes the proof.
\end{proof}

\section{Sharp critical temperature at fixed mass}\label{sec-temp-transition}

We first prove that a normal phase exists at sufficiently high temperature and that condensation
occurs at sufficiently low temperature. We then formulate the problem at fixed chemical potential
as a convex obstacle problem. Its comparison principle shows that normality persists when the
temperature is increased. A closedness argument includes the transition point and completes the
proof of Theorem~\ref{thm-T}.

\subsection{Auxiliary normal states}
\begin{lemma}\label{lem-exist-ell}
For any $T>0$ and $c\geq-\Phi_{\min}$, there exists a unique solution
$V=V_{T,c}\in\dot H^1(\R^3)$ of
\begin{equation}\label{ell}
\left\{\begin{aligned}
-\Delta V &=4\pi\int_{\R^3}\frac{1}{e^{\frac{1}{T}(\frac{|p|^2}{2}+\Phi+V+c)}-1}\,dp,\\
V&>0,
\end{aligned}\right.
\qquad\text{in }\R^3.
\end{equation}
\end{lemma}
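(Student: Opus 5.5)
Since $c\ge-\Phi_{\min}$, the right-hand side of \eqref{ell} is $4\pi R_T(\Phi+V+c)$, with $R_T$ as in the proof of Theorem~\ref{prop-EL-minimizer}, and $\Phi+V+c\ge V\ge0$ whenever $V\ge0$. The plan is to obtain $V$ as the minimizer of a strictly convex functional on $\dot H^1(\R^3)$. Let $G_T(s):=\int_s^\infty R_T(t)\,dt$ for $s\geq0$; by the geometric series this is $(2\pi T)^{3/2}T\sum_k k^{-5/2}e^{-ks/T}$, so $G_T$ is finite, positive, convex and decreasing, and $G_T'=-R_T$. Extend $G_T$ to $s<0$ by its tangent line at $0$, i.e. $G_T(s)=G_T(0)-R_T(0)s$. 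The extension is $C^1$ and convex, and $-G_T'$ is bounded by $R_T(0)$. Then consider
\[
 \mathcal J(V):=\frac{1}{8\pi}\int_{\R^3}|\nabla V|^2\,dx+\int_{\R^3}G_T\bigl(\Phi(x)+V(x)+c\bigr)\,dx ,
\]
whose formal Euler--Lagrange equation is $-\Delta V=4\pi R_T(\Phi+V+c)$ where the argument is nonnegative. Note $\mathcal J(0)=\int G_T(\Phi+c)\,dx<\infty$, because $G_T(s)\le Ce^{-s/T}$ and $\Phi$ is confining with $\int e^{-\sqrt{\Phi_+}/2}<\infty$; for large $|x|$, $\Phi/T\ge\sqrt{\Phi}/2$.

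Existence: $\mathcal J$ is convex and strictly convex through the Dirichlet term. Coercivity follows because the second term is bounded below: $G_T\ge0$ on $[0,\infty)$, and for negative arguments the linear extension gives at worst $-R_T(0)\int V_-\,dx$ on the region where $V<-(\Phi+c)\le0$. To avoid this I would instead minimize over the closed convex set $\{V\ge0\}\subset\dot H^1$, where the second term is nonnegative, so minimizing sequences are bounded in $\dot H^1$. Weak lower semicontinuity follows from Fatou, using a.e. convergence along subsequences by local Rellich compactness. Strict convexity gives uniqueness of the minimizer $V_*\ge0$.

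Equation: varying in directions $\varphi\ge0$ and in arbitrary directions on $\{V_*>0\}$ gives the variational inequality $-\Delta V_*-4\pi R_T(\Phi+V_*+c)\ge0$, with equality where $V_*>0$. The right side $4\pi R_T(\cdot)$ is a positive function bounded by $4\pi R_T(0)$. Hence $-\Delta V_*\geq 4\pi R_T(\Phi+V_*+c)>0$ in $\mathcal D'$, so $V_*$ is superharmonic and not identically zero. By the strong minimum principle, since $V_*\ge0$ is nonconstant, we get $V_*>0$ everywhere, and the constraint is inactive. Thus $V_*$ solves \eqref{ell}. Alternatively, and more robustly, I would show that $V_*=|x|^{-1}*\bigl(R_T(\Phi+V_*+c)\bigr)$ (a Newtonian potential of an $L^1\cap L^\infty$ density) by Liouville in $\dot H^1$, which gives positivity directly. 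Integrability of $R_T(\Phi+V_*+c)\le R_T(\Phi+c)\le C(1+e^{-(\Phi+c)/T})$ requires care only when $\Phi+c$ is small, a compact set.

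Uniqueness among all positive $\dot H^1$ solutions: take two solutions and subtract. With $w=V_1-V_2$, we have $-\Delta w=4\pi(R_T(\Phi+V_1+c)-R_T(\Phi+V_2+c))$, and the right side has sign opposite to $w$ since $R_T$ is decreasing. Testing with $w$ (justified in $\dot H^1$ by approximation, since the right side is bounded and the product is integrable through Proposition~\ref{prop-quasicont}-type pairing) gives $\|\nabla w\|_2^2\le0$, so $w=0$. The main obstacle is the integrability/pairing bookkeeping on the whole space — ensuring $R_T(\Phi+V+c)\in L^{6/5}$ so the source lies in $\dot H^{-1}$. This is handled by $R_T(s)\lesssim e^{-s/T}$ for $s\ge1$ together with the confinement integrability assumption, plus boundedness of $R_T$ on the compact set $\{\Phi+c\le1\}$.
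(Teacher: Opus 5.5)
Your proof is correct and is essentially the paper's argument. Your $\mathcal J$ equals $(4\pi)^{-1}I+\mathcal J(0)$, where $I(V)=\int_{\R^3}\bigl(\frac12|\nabla V|^2-G(x,V)\bigr)\,dx$ is the paper's functional; it is minimized over the same nonnegative cone and followed by the same steps: the one-sided inequality $-\Delta V_*\ge4\pi R_T(\Phi+V_*+c)$, positivity with a lower bound on compact sets (you via the strong minimum principle and lower semicontinuity, the paper via $V_*\ge|x|^{-1}*R_T(\Phi+V_*+c)>0$), and then two-sided variations.

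The differences are only technical. Using the nonnegative pressure $G_T$ lets Fatou replace the paper's Sobolev and tail-convergence estimates, and testing the difference of two solutions gives uniqueness among all positive solutions rather than only among minimizers. Your ``alternative'' identity $V_*=|x|^{-1}*R_T(\Phi+V_*+c)$ is circular before the equation is established, so it should be replaced by the inequality version just mentioned.
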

\begin{proof}
By replacing $\Phi$ and $c$ by $\Phi-\Phi_{\min}$ and $c+\Phi_{\min}$, respectively, we may assume
$\Phi_{\min}=0$ and $c\geq0$. Define
\[
g(x,t)=4\pi\int_{\R^3}\frac{1}{e^{\frac1T(\frac{|p|^2}{2}+\Phi(x)+t+c)}-1}\,dp,
\qquad
G(x,t)=\int_0^t g(x,s)\,ds.
\]
Consider
\[
I_{\min}=\inf\left\{ I(V)=\int_{\R^3}\frac12|\nabla V|^2-G(x,V)\,dx
~\middle|~ V\in\dot H^1(\R^3),\ V\geq0\right\}.
\]

Let $\{V_n\}$ be a minimizing sequence. Set
\[
\mathcal R=\left\{x\in\R^3~\middle|~\frac12e^{\Phi(x)/T}\leq1\right\}.
\]
This set is compact. On $\mathcal R^c$,
\[
g(x,t)\leq Ce^{-(\Phi(x)+t)/T},
\]
whereas on $\mathcal R$ we have $g(x,t)\leq C$. Consequently
\[
G(x,t)\leq C\left(t\chi_{\mathcal R}+e^{-\Phi/T}\chi_{\mathcal R^c}\right).
\]
By Sobolev embedding,
\[
\int_{\R^3}G(x,V_n)\,dx
\leq C(1+\|V_n\|_{L^6(\mathcal R)})
\leq C(1+\|\nabla V_n\|_2).
\]
Thus $\{V_n\}$ is bounded in $\dot H^1$.

After passing to a subsequence, $V_n\rightharpoonup V_0$ weakly in $\dot H^1$, strongly in
$L^1_{\rm loc}$, and a.e. The preceding bound and the mean value theorem imply, for every ball
$B_R\supset\mathcal R$,
\[
\int_{\R^3}|G(x,V_n)-G(x,V_0)|\,dx
\leq C\|V_n-V_0\|_{L^1(B_R)}
+C\|e^{-\Phi/T}\|_{L^{6/5}(B_R^c)}\|\nabla(V_n-V_0)\|_2.
\]
Letting first $n\to\infty$ and then $R\to\infty$ gives
\[
\int_{\R^3}G(x,V_n)\,dx\to\int_{\R^3}G(x,V_0)\,dx.
\]
Therefore $V_0$ minimizes $I$.

The Euler inequality in the nonnegative cone gives
\[
 \int_{\R^3}\nabla V_0\cdot\nabla\varphi\,dx-
 \int_{\R^3}g(x,V_0)\varphi\,dx\geq0
 \qquad\text{for every }\varphi\in C_c^\infty(\R^3),\ \varphi\geq0.
\]
Equivalently, $-\Delta V_0\geq g(x,V_0)$ in the sense of distributions. Since
$g(x,V_0)\geq0$ and $g(x,V_0)\not\equiv0$, the order-preserving property of the Newtonian inverse
implies
\[
 V_0\geq (-\Delta)^{-1}g(\cdot,V_0)>0.
\]
Indeed, the Newtonian kernel is strictly positive; in particular, on the support of any fixed test
function the right-hand side has a positive lower bound. Hence, for every compactly supported smooth
$\varphi$ and all sufficiently small $|\varepsilon|$, $V_0+\varepsilon\varphi$ remains nonnegative on
the support of $\varphi$. We may vary in both directions and obtain
\[
 \int_{\R^3}\nabla V_0\cdot\nabla\varphi\,dx=
 \int_{\R^3}g(x,V_0)\varphi\,dx,
 \qquad \varphi\in C_c^\infty(\R^3).
\]
Hence $V_0$ solves \eqref{ell}.

Finally, for every fixed $x$, the function $t\mapsto g(x,t)$ is strictly decreasing on its domain.
Thus $G(x,\cdot)$ is strictly concave and $-G(x,\cdot)$ is strictly convex. Consequently $I$ is
strictly convex on the nonnegative cone of $\dot H^1$, and the solution is unique.
\end{proof}

\begin{defi}\label{D.d.def}
For $F,F_0\in\mathcal A$, define
\[
d(F^a,F_0^a)=\iint_{\R^6}\left[-h(F^a)+h(F_0^a)+h'(F_0^a)(F^a-F_0^a)\right]dpdx.
\]
Then $d(F^a,F_0^a)\geq0$, and equality holds if and only if $F^a=F_0^a$ a.e., because $-h$ is
strictly convex.
\end{defi}

\begin{lemma}\label{lem-expansion}
Let $V_{T,c}\in\dot H^1(\R^3)$ be the solution of \eqref{ell} with $T>0$ and $c\geq-\Phi_{\min}$.
Denote
\[
F_{T,c}=\frac{1}{e^{\frac{1}{T}(\frac{|p|^2}{2}+\Phi+V_{T,c}+c)}-1},
\qquad
M_{T,c}=\iint_{\R^6}F_{T,c}\,dpdx.
\]
Let $F\in\mathcal A$ with $F(\R^6)=M_{T,c}$. Then
\begin{equation}\label{expansion}
\begin{aligned}
\mathcal F(F)-\mathcal F(F_{T,c})
&=\iint_{\R^6}\left(\frac{|p|^2}{2}+\Phi+V_{F_{T,c}}+c\right)dF^s
+T d(F^a,F_{T,c}) \\
&\quad +\frac{1}{8\pi}\|\nabla(V_F-V_{F_{T,c}})\|_2^2.
\end{aligned}
\end{equation}
In particular, $F_{T,c}$ is the unique minimizer of \eqref{mp-boson} with mass $M=M_{T,c}$.
\end{lemma}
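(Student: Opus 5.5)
The plan is a direct algebraic expansion of $\mathcal F(F)-\mathcal F(F_{T,c})$. Write $F_*:=F_{T,c}$, $V_*:=V_{T,c}$ and $e_*:=\frac{|p|^2}{2}+\Phi+V_*+c$.

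\emph{Step 1: $F_*$ is admissible and $V_{F_*}=V_*$.} Since $\Phi+V_*+c\ge \Phi_{\min}+c\ge0$, the density $F_*$ is finite for $p\neq0$. Integrating in $p$ gives $\rho_{F_*}=R_T(\Phi+V_*+c)\le R_T(0)$, and $\rho_{F_*}$ decays like $e^{-\Phi/T}$ at infinity. Hence the kinetic and $\Phi_+$-moments are finite, and $M_{T,c}<\infty$. By \eqref{ell} we have $-\Delta V_*=4\pi\rho_{F_*}$ with $V_*\in\dot H^1$. Uniqueness of $\dot H^1$ solutions of Poisson's equation then gives $V_{F_*}=V_*$, and Proposition~\ref{prop-coulomb-energy} shows that $F_*\in\mathcal A_{M_{T,c}}$. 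Also $T\,j'(F_*)=T\log\frac{F_*}{1+F_*}=-e_*$ pointwise.

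\emph{Step 2: split the difference into three parts.} Write $F=F^a\,dpdx+F^s$.
\begin{itemize}
\item The linear energy difference is $\iint(\frac{|p|^2}{2}+\Phi)\,d(F-F_*)$.
\item The Coulomb difference is obtained from the identity $\frac1{8\pi}\|\nabla V_F\|^2-\frac1{8\pi}\|\nabla V_*\|^2=\frac1{8\pi}\|\nabla(V_F-V_*)\|^2+\frac1{4\pi}\int\nabla V_*\cdot\nabla(V_F-V_*)$. By Proposition~\ref{prop-quasicont}, applied to $\rho_F$ and to $\rho_{F_*}$ with $\phi=V_*$, the last term equals $\int\widetilde V_*\,d(\rho_F-\rho_{F_*})$. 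Here $V_*$ is in fact continuous, by elliptic regularity with bounded right-hand side.
\item The entropy difference is $T\iint(j(F^a)-j(F_*))$. By the definition of $d$ it equals $T\,d(F^a,F_*)+T\iint j'(F_*)(F^a-F_*)=T\,d(F^a,F_*)-\iint e_*(F^a-F_*)$.
\end{itemize}

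\emph{Step 3: collect the terms.} The linear and Coulomb cross terms combine to $\iint(e_*-c)\,d(F-F_*)$. This splits into $\iint e_*(F^a-F_*)\,dpdx+\iint e_*\,dF^s-c\,(F(\R^6)-M_{T,c})$. The mass term vanishes because the masses agree, and the first term cancels against the entropy contribution. What remains is exactly \eqref{expansion}.

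\emph{Step 4: minimality and uniqueness.} Since $\Phi+V_*+c\ge0$, the singular term is $\ge0$, and the other two terms are also $\ge0$. Hence $F_*$ is a minimizer. If $\mathcal F(F)=\mathcal F(F_*)$, then every term vanishes. From $d=0$ we get $F^a=F_*$ a.e. by Definition~\ref{D.d.def}, and mass conservation then forces $F^s(\R^6)=0$. Alternatively, uniqueness follows from Proposition~\ref{prop-existence-minimizer}.

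The main obstacle is justifying the bookkeeping of possibly infinite or unbounded integrals. The integrals $\iint e_*\,dF^a$ and $\iint j'(F_*)F^a$ need not be separately finite near $p=0$, where $j'(F_*)\to-\infty$ in regions where $\Phi+V_*+c$ vanishes. I would handle this by noting that $e_*\ge0$ and working with the combined nonnegative integrand $j(F^a)-j(F_*)-j'(F_*)(F^a-F_*)$ throughout. All the energy integrals are finite for $F\in\mathcal A$, so the only delicate identity is the entropy one. I would prove it first on the sets $\{e_*\ge\varepsilon\}$, where everything is integrable, and then pass to the limit $\varepsilon\to0$ by monotone convergence for the nonnegative Bregman integrand.
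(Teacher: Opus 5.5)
Your proposal is correct and follows the paper's proof essentially step for step: admissibility of $F_{T,c}$, the Euler identity $T j'(F_{T,c})=-\bigl(\tfrac{|p|^2}{2}+\Phi+V_{T,c}+c\bigr)$, the Coulomb polarization formula, cancellation of $c$ by the mass constraint, and then nonnegativity of the three terms together with $d=0\Rightarrow F^a=F_{T,c}$ for uniqueness. The truncation in your last paragraph is unnecessary, and you have the direction of the blow-up reversed: $j'(F_{T,c})$ tends to $0$, not $-\infty$, where the one-particle energy $e_*$ vanishes; moreover, since $0\le e_*\le \tfrac{|p|^2}{2}+\Phi_++V_{T,c}+|c|$ and this bound is integrable against every $F\in\mathcal A$ (using Proposition~\ref{prop-quasicont} for the $V_{T,c}$ term), all the integrals are finite and the algebra goes through directly.
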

\begin{proof}
We first verify that the quantities in the statement are finite. The proof of
Lemma~\ref{lem-exist-ell} gives $V_{T,c}>0$ locally and $V_{T,c}\geq0$ globally. Since
$c\geq-\Phi_{\min}$,
\[
 \Phi+V_{T,c}+c\geq\Phi-\Phi_{\min}\geq0.
\]
The exponential decay of the Bose factor for large one-particle energy, together with the standing
confinement assumption, implies
\[
 \iint_{\R^6}\left(1+\frac{|p|^2}{2}+\Phi_+(x)\right)F_{T,c}(x,p)\,dpdx<\infty
\]
and $H(F_{T,c})>-\infty$. The Poisson equation and
$V_{T,c}\in\dot H^1$ show that its spatial density has finite Coulomb energy. Hence
$F_{T,c}\,dxdp\in\mathcal A_{M_{T,c}}$.

Since $h'(s)=\log(1+1/s)$, the definition of $F_{T,c}$ gives
\[
\frac{|p|^2}{2}+\Phi+V_{F_{T,c}}+c=T h'(F_{T,c}).
\]
Using this identity, the equality of the masses of $F$ and $F_{T,c}$, and
\[
\frac{1}{8\pi}\left(\|\nabla V_F\|_2^2-\|\nabla V_{F_{T,c}}\|_2^2\right)
=\int_{\R^3}V_{F_{T,c}}\,d(\rho_F-\rho_{F_{T,c}})
+\frac{1}{8\pi}\|\nabla(V_F-V_{F_{T,c}})\|_2^2,
\]
we obtain exactly \eqref{expansion}. The first term on the right-hand side is nonnegative because
$c\geq-\Phi_{\min}$ and $V_{F_{T,c}}>0$. The other two terms are nonnegative by definition. Thus
$F_{T,c}$ is a minimizer.

If another minimizer $F_0$ has the same mass, all three nonnegative terms in \eqref{expansion} must
vanish. In particular $d(F_0^a,F_{T,c})=0$, hence $F_0^a=F_{T,c}$ a.e. The mass constraint then gives
$F_0^s=0$, and so $F_0=F_{T,c}$.
\end{proof}

\begin{lemma}\label{lem-continuity-MTc}
For each fixed $T>0$, the map $c\mapsto M_{T,c}$ is continuous on $[-\Phi_{\min},\infty)$, and
\[
\lim_{c\to\infty}M_{T,c}=0.
\]
\end{lemma}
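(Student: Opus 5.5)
The plan is to first prove that $c\mapsto V_{T,c}$ is monotone and continuous in $\dot H^1$, and then pass to the masses. Throughout, write $g_c(x,t)$ for the right-hand side of \eqref{ell}; it is nonincreasing in both $t$ and $c$.

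\textbf{Monotonicity.} For $c_1<c_2$, set $W:=V_{T,c_1}-V_{T,c_2}$. Test the difference of the two equations with $W_-=\max\{-W,0\}$. On the set $\{W<0\}$ we have $V_{T,c_1}<V_{T,c_2}$. Together with $c_1<c_2$ and the monotonicity of $g$, this gives $g_{c_1}(x,V_{T,c_1})\ge g_{c_2}(x,V_{T,c_2})$ there. Hence $-\int|\nabla W_-|^2\ge0$, so $W_-=0$. Thus $V_{T,c}$ is nonincreasing in $c$.

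\textbf{Continuity of $c\mapsto V_{T,c}$.} Let $c_n\to c$. By the uniform bound $G\le C(t\chi_{\mathcal R}+e^{-\Phi/T}\chi_{\mathcal R^c})$ from the proof of Lemma~\ref{lem-exist-ell}, the minimal values, and hence $\|\nabla V_{T,c_n}\|_2$, are uniformly bounded; the constants are uniform for $c_n\ge-\Phi_{\min}$ after the normalization $\Phi_{\min}=0$. Extract a subsequence with $V_{T,c_n}\rightharpoonup V_*$ weakly in $\dot H^1$ and a.e. The sources satisfy $g_{c_n}(x,V_{T,c_n})\to g_c(x,V_*)$ a.e. They are dominated by $C\chi_{\mathcal R}+Ce^{-\Phi/T}$, which lies in $L^{6/5}$ by the confinement assumption, since $e^{-\Phi/T}\le Ce^{-\sqrt{\Phi_+}/2}$ for large $\Phi$. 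Passing to the limit in the weak formulation shows that $V_*$ solves \eqref{ell} with parameter $c$, in the sense $V_*\ge0$. Positivity follows as before, so uniqueness (Lemma~\ref{lem-exist-ell}) gives $V_*=V_{T,c}$, and the whole sequence converges.

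\textbf{Continuity of $M_{T,c}$.} Write $M_{T,c}=\int R_T(\Phi+V_{T,c}+c)\,dx$. Along a.e.-convergent subsequences the integrand converges a.e. It is dominated by $R_T(\Phi-\Phi_{\min})$, which is integrable because $R_T(s)\le C_Te^{-s/T}$ for $s\ge1$ and is bounded by $R_T(0)$. Dominated convergence then gives $M_{T,c_n}\to M_{T,c}$, and continuity follows.

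\textbf{Decay as $c\to\infty$.} Since $V_{T,c}\ge0$,
\[
M_{T,c}\le\int R_T(\Phi-\Phi_{\min}+c+\Phi_{\min})\,dx .
\]
The integrand decreases pointwise to $0$ as $c\to\infty$, because $R_T(s)\to0$ as $s\to\infty$, and it is dominated by the integrable function above. Dominated convergence gives $M_{T,c}\to0$.

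\textbf{Main obstacle.} The hard step is continuity in $c$: the $\dot H^1$ bound must be uniform in $c$, and the limit must be identified as a genuine solution. Uniqueness from Lemma~\ref{lem-exist-ell} settles the identification, but I would check carefully that the domination of the sources is uniform in $n$. The monotonicity step is only a convenience and is not strictly needed.
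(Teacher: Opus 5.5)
Your proposal is correct and follows essentially the paper's proof. Both obtain a uniform $\dot H^1$ bound from the coercivity estimate of Lemma~\ref{lem-exist-ell}, extract a weakly convergent subsequence, pass to the limit in the weak formulation, identify the limit by uniqueness, and use dominated convergence for the masses and for the decay as $c\to\infty$. The differences are minor: your monotonicity step is not needed, and your single dominating function $R_T(\Phi-\Phi_{\min})$ replaces the paper's split into a local part and a uniform tail estimate.
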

\begin{proof}
For the proof set
\[
 R_T(s):=\int_{\R^3}\frac{1}{\exp\left(\frac1T(\frac{|p|^2}{2}+s)\right)-1}\,dp,
 \qquad s\geq0.
\]
Then
\[
 M_{T,c}=\int_{\R^3}R_T(\Phi+V_{T,c}+c)\,dx.
\]
Let $c_n\to c$ with $c_n\geq-\Phi_{\min}$. Since the sequence $\{c_n\}$ is contained in a bounded
$c$-interval, the estimates in the proof of Lemma \ref{lem-exist-ell} give a uniform $\dot H^1$ bound
for $V_n:=V_{T,c_n}$. Passing to a subsequence, we may assume
\[
 V_n\rightharpoonup V\quad\hbox{in }\dot H^1(\R^3),
 \qquad
 V_n\to V\quad\hbox{a.e. and in }L^1_{\rm loc}(\R^3).
\]
For every $\varphi\in C_c^\infty(\R^3)$,
\[
 \int_{\R^3}\nabla V_n\cdot\nabla\varphi\,dx
 =4\pi\int_{\R^3}R_T(\Phi+V_n+c_n)\varphi\,dx.
\]
On the support of $\varphi$ the integrand is bounded by $R_T(0)|\varphi|$, and the pointwise
convergence gives
\[
 R_T(\Phi+V_n+c_n)\to R_T(\Phi+V+c)
 \quad\hbox{a.e.}
\]
Thus we may pass to the limit in the weak formulation. The limit $V$ solves \eqref{ell} with parameter
$c$, and uniqueness in Lemma \ref{lem-exist-ell} implies $V=V_{T,c}$. Since every subsequence has the
same limit, the whole sequence converges in the above sense to $V_{T,c}$.

It remains to pass to the limit in the mass. The local part follows from dominated convergence: for
every ball $B_R$,
\[
 \int_{B_R} R_T(\Phi+V_n+c_n)\,dx
 \to
 \int_{B_R} R_T(\Phi+V_{T,c}+c)\,dx.
\]
The tails are uniform. Indeed, if $n$ is large then $c_n\geq c_-$ for some
$c_{-}>-\infty$ with $c_{-}\geq-\Phi_{\min}$, and $V_n\geq0$. Since $R_T$ is decreasing,
\[
 R_T(\Phi+V_n+c_n)\leq R_T(\Phi+c_{-}).
\]
For large values of $\Phi+c_-$ we have the elementary bound
$R_T(\Phi+c_{-})\leq C e^{-(\Phi+c_{-})/(2T)}$, and this function is integrable at infinity by the
standing confinement assumption on $\Phi$. Hence
\[
 \sup_n\int_{B_R^c}R_T(\Phi+V_n+c_n)\,dx\to0
 \qquad\text{as }R\to\infty.
\]
Combining the local convergence and the uniform tail estimate yields
$M_{T,c_n}\to M_{T,c}$.

Finally, since $V_{T,c}\geq0$ and $R_T$ is decreasing,
\[
 0\leq M_{T,c}\leq\int_{\R^3}R_T(\Phi+c)\,dx.
\]
The right-hand side tends to zero as $c\to\infty$ by the same local dominated-convergence and tail
argument. Therefore $M_{T,c}\to0$.
\end{proof}

\subsection{A preliminary high-temperature normal regime}
For fixed $T>0$ define
\[
 M^*(T):=\sup_{c\geq-\Phi_{\min}}M_{T,c}.
\]
We claim that
\[
\lim_{T\to\infty}M^*(T)=\infty.
\]
It is enough to show $M_{T,-\Phi_{\min}}\to\infty$. Suppose not. Then for a sequence $T_n\to\infty$,
with $V_n=V_{T_n,-\Phi_{\min}}$ and $\rho_n=-(4\pi)^{-1}\Delta V_n$, we would have
$M_n=\int\rho_n\,dx\leq M_0$. Moreover
\[
\rho_n(x)\leq C T_n^{3/2},
\]
and the standard Newton potential estimate gives
\[
\|V_n\|_\infty\leq C\|\rho_n\|_1^{2/3}\|\rho_n\|_\infty^{1/3}
\leq C M_0^{2/3}T_n^{1/2}.
\]
Choose a ball $B$ of positive measure on which $\Phi-\Phi_{\min}\leq1$. For $x\in B$ and
$|p|^2/2\leq T_n$, the quantity
\[
\frac1{T_n}\left(\frac{|p|^2}{2}+\Phi(x)+V_n(x)-\Phi_{\min}\right)
\]
is uniformly bounded. Therefore the Bose factor is bounded from below by a positive constant on
$B\times\{|p|^2/2\leq T_n\}$. Hence
\[
M_n\geq c |B| T_n^{3/2}\to\infty,
\]
which contradicts $M_n\leq M_0$. Thus $M^*(T)\to\infty$.

Let the prescribed mass $M>0$ be fixed. By the preceding divergence, $M<M^*(T)$ for
all sufficiently large $T$. Fix such a $T$. By the definition of $M^*(T)$, there is
$c_1\geq-\Phi_{\min}$ such that
$M<M_{T,c_1}$. On the other hand, Lemma \ref{lem-continuity-MTc} gives
$M_{T,c}\to0$ as $c\to\infty$. Since $c\mapsto M_{T,c}$ is continuous on
$[-\Phi_{\min},\infty)$, the intermediate value theorem gives a number $c_0\geq c_1$ such that
\[
 M=M_{T,c_0}.
\]
By Lemma \ref{lem-expansion}, $F_{T,c_0}$ is the unique minimizer of \eqref{mp-boson} with mass $M$.
It is purely absolutely continuous, hence the singular spatial part of the minimizer vanishes:
\[
 \rho_T^s=0.
\]
Thus the fixed-mass normal-temperature set is nonempty.

\subsection{Normal-state energy level}
For the low-temperature comparison we isolate the class of purely absolutely continuous Bose normal
states with prescribed mass. Set
\[
 E_{\min}:=\inf_{F\in\mathcal A}\mathcal E(F),
 \qquad
 E_T:=\mathcal E(F_T),
\]
where $F_T$ denotes the minimizer of \eqref{mp-boson} at temperature $T>0$. For $T>0$, $c\in\R$, and
$V\in\dot H^1(\R^3)$, define
\[
 U_{c,V}(x):=\Phi(x)+V(x)+c
\]
and, whenever $U_{c,V}\geq0$ q.e.,
\[
 \alpha_{T,c,V}(x,p):=
 \frac{1}{\exp\left(\frac1T\left(\frac{|p|^2}{2}+U_{c,V}(x)\right)\right)-1}.
\]
Let $\mathcal B_M$ be the set of triples $(T,c,V)$ such that
\[
 T>0,\qquad V\in\dot H^1(\R^3),\qquad U_{c,V}\geq0\quad\text{q.e.},
\]
\[
 -\Delta V=4\pi\rho_{\alpha_{T,c,V}}
 \quad\text{in }\mathcal D'(\R^3),
 \qquad
 \rho_{\alpha_{T,c,V}}(x):=\int_{\R^3}\alpha_{T,c,V}(x,p)\,dp,
\]
and
\[
 \iint_{\R^6}\alpha_{T,c,V}(x,p)\,dpdx=M.
\]
For $(T,c,V)\in\mathcal B_M$, define
\[
 J(T,c,V):=\mathcal E(\alpha_{T,c,V}\,dxdp),
\]
that is,
\[
J(T,c,V)=
\iint_{\R^6}\left(\frac{|p|^2}{2}+\Phi(x)\right)
\alpha_{T,c,V}(x,p)\,dpdx
+\frac1{8\pi}\int_{\R^3}|\nabla V|^2\,dx.
\]
Finally set
\[
 J_{\min}:=\inf_{(T,c,V)\in\mathcal B_M}J(T,c,V),
\]
with the convention $J_{\min}=+\infty$ if $\mathcal B_M=\emptyset$.

\subsection{The zero-temperature energy problem}
\begin{lemma}\label{lem-Jmin-gap}
With the convention $\inf\emptyset=+\infty$, one has
\[
E_{\min}<J_{\min}.
\]
\end{lemma}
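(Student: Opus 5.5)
The plan is to argue by contradiction, using only two facts about elements of $\mathcal B_M$. Every Bose normal state carries kinetic energy proportional to $T$ per unit mass. Every Bose normal state also has spatial density bounded by $R_T(0)=(2\pi T)^{3/2}\zeta(3/2)$. These two facts pull in opposite directions as $T\to0$, and that tension gives the contradiction.

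If $\mathcal B_M=\emptyset$ there is nothing to prove, since $E_{\min}\ge \Phi_{\min}M>-\infty$: the Coulomb term is nonnegative. Otherwise, suppose $J_{\min}\le E_{\min}$. Then there is a sequence $(T_n,c_n,V_n)\in\mathcal B_M$ with $\alpha_n:=\alpha_{T_n,c_n,V_n}$ and $J(T_n,c_n,V_n)\to E_{\min}$.

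The first step is the kinetic splitting. Let $\rho_n:=\rho_{\alpha_n}$. The measure $\rho_n\otimes\delta_{p=0}$ lies in $\mathcal A$: it has the same spatial marginal, hence the same $V_n$ and the same Coulomb energy, and the same $\Phi$-moment. Therefore
\[
 J(T_n,c_n,V_n)=\mathcal E(\rho_n\otimes\delta_{p=0})+\iint_{\R^6}\frac{|p|^2}{2}\alpha_n\,dpdx
 \ge E_{\min}+\iint_{\R^6}\frac{|p|^2}{2}\alpha_n\,dpdx .
\]
Next I will show that the kinetic energy is at least $\kappa T_n M$ for a universal $\kappa>0$. At each fixed $x$, $U_{c_n,V_n}(x)\ge0$ q.e., hence a.e. The momentum profile is then the Bose law with fugacity $z=e^{-U/T}\in(0,1]$. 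A geometric-series expansion gives
\[
 \int\frac{|p|^2}{2}\alpha\,dp=\tfrac32 T\,(2\pi T)^{3/2}\mathrm{Li}_{5/2}(z),
 \qquad \int\alpha\,dp=(2\pi T)^{3/2}\mathrm{Li}_{3/2}(z).
\]
The ratio $\mathrm{Li}_{5/2}(z)/\mathrm{Li}_{3/2}(z)$ is bounded below on $(0,1]$ by $\zeta(5/2)/\zeta(3/2)$. This holds termwise, since $k^{-5/2}\ge k^{-3/2}\,\zeta(5/2)/\zeta(3/2)$ fails only for $k=1$ where it is trivially fine; if needed I would simply use that the ratio is continuous on $(0,1]$ and tends to $1$ as $z\to0$. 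It follows that $J(T_n,c_n,V_n)\ge E_{\min}+\kappa T_nM$, and hence $T_n\to0$.

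The second step is tightness. The functional $J$ is bounded along the sequence, $\Phi_-$ is bounded, and the Coulomb term is nonnegative. Together these give $\sup_n\int\Phi_+\rho_n\,dx<\infty$. Since $\Phi$ is confining, for each $\varepsilon>0$ there is a compact set $K$ with $\int_{K^c}\rho_n\le\varepsilon$ for all $n$.

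The third step is the contradiction. Because $U\ge0$ a.e. and $R_T$ is decreasing, $\rho_n\le R_{T_n}(0)=(2\pi T_n)^{3/2}\zeta(3/2)$ a.e. Hence
\[
 M\le \int_K\rho_n+\varepsilon\le (2\pi T_n)^{3/2}\zeta(3/2)|K|+\varepsilon\longrightarrow\varepsilon .
\]
This is impossible for $\varepsilon<M$, so $E_{\min}<J_{\min}$.

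The main point requiring care is the uniform lower bound on kinetic energy per particle over all fugacities $z\in(0,1]$. That bound is exactly where the obstacle constraint $U_{c,V}\ge0$ enters, because it prevents arbitrarily cold momentum profiles at fixed $T$. The rest is routine. No minimizer of $\mathcal E$ is needed, and neither is any convergence of $V_n$.
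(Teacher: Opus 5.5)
Your argument is correct and essentially the paper's. It uses the same three ingredients: the zero-momentum competitor $\rho_n\otimes\delta_{p=0}$, the per-point kinetic bound $\ge\kappa T\rho$, and tightness together with $\rho\le R_T(0)$ to keep $T$ away from $0$. You arrange them as a contradiction with $T_n\to0$, whereas the paper proves $T\ge\tau_C$ directly and concludes $J_{\min}\ge E_{\min}+\kappa\tau_C M$.

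One correction. Your termwise justification of $\mathrm{Li}_{5/2}(z)\ge\frac{\zeta(5/2)}{\zeta(3/2)}\mathrm{Li}_{3/2}(z)$ is backwards: $k^{-1}\ge\zeta(5/2)/\zeta(3/2)\approx0.51$ holds only for $k=1$ and fails for every $k\ge2$. Rely instead on your continuity fallback, which is exactly the paper's argument.
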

\begin{proof}
If $\mathcal B_M=\emptyset$, then $J_{\min}=+\infty$ and there is nothing to prove. Hence assume that
$J_{\min}<\infty$. Since $\mathcal E(F)\geq M\Phi_{\min}$ for every admissible $F$, we also have
$J_{\min}>-\infty$.

We first prove a uniform positive lower bound on the temperature for normal states whose total energy
is bounded. Fix $C>0$. We claim that there exists $\tau_C>0$ such that every
$(T,c,V)\in\mathcal B_M$ satisfying $J(T,c,V)\leq C$ has $T\geq\tau_C$.
Indeed, write
\[
U(x):=\Phi(x)+V(x)+c\geq0,
\qquad
\rho(x):=\int_{\R^3}\alpha_{T,c,V}(x,p)\,dp=R_T(U(x)).
\]
Since $R_T$ is decreasing,
\[
0\leq \rho(x)\leq R_T(0)=(2\pi T)^{3/2}\zeta(3/2).
\]
Moreover, since $\Phi_-$ is bounded and the kinetic and Coulomb energies are nonnegative,
\[
\int_{\R^3}\Phi_+(x)\rho(x)\,dx
\leq C+M\|\Phi_-\|_{L^\infty}=:C_1.
\]
Choose $R>0$ so large that $\Phi_+(x)\geq 4C_1/M$ for $|x|\geq R$. Then
\[
\int_{|x|\geq R}\rho(x)\,dx\leq \frac{M}{4}.
\]
On the other hand,
\[
\int_{|x|<R}\rho(x)\,dx
\leq |B_R|(2\pi T)^{3/2}\zeta(3/2).
\]
If $T>0$ is sufficiently small, the last quantity is at most $M/4$, contradicting the mass constraint
$\int\rho=M$. This proves the claim.

Next we record a lower bound on the kinetic energy of a Bose normal state. For $z\geq0$, set
\[
B(z):=\int_{\R^3}\frac{1}{e^{|q|^2/2+z}-1}\,dq,
\qquad
A(z):=\int_{\R^3}\frac{|q|^2/2}{e^{|q|^2/2+z}-1}\,dq.
\]
The ratio $A(z)/B(z)$ is continuous on $[0,\infty)$ and has a positive lower bound; indeed it has a
finite positive value at $z=0$ and tends to the Maxwellian value $3/2$ as $z\to\infty$. Hence there is
$\kappa>0$ such that $A(z)\geq\kappa B(z)$ for all $z\geq0$. By the scaling $p=\sqrt T q$, for each
$x$,
\[
\int_{\R^3}\frac{|p|^2}{2}\alpha_{T,c,V}(x,p)\,dp
=T^{5/2}A\left(\frac{U(x)}{T}\right)
\geq \kappa T^{5/2}B\left(\frac{U(x)}{T}\right)
=\kappa T\rho(x).
\]
Consequently, if $J(T,c,V)\leq C$, then
\[
\iint_{\R^6}\frac{|p|^2}{2}\alpha_{T,c,V}(x,p)\,dpdx
\geq \kappa T M\geq \kappa\tau_C M.
\]

Now take a minimizing sequence $(T_n,c_n,V_n)\in\mathcal B_M$ with
$J(T_n,c_n,V_n)\downarrow J_{\min}$. For all sufficiently large $n$ we have
$J(T_n,c_n,V_n)\leq C:=\max\{J_{\min}+1,1\}$. Let
\[
F_n(x,p):=\alpha_{T_n,c_n,V_n}(x,p),
\qquad
\rho_n(x):=\int_{\R^3}F_n(x,p)\,dp.
\]
Define the zero-momentum competitor
\[
\widetilde F_n:=\rho_n(x)\,dx\otimes\delta_0(p).
\]
Then $\widetilde F_n\in\mathcal A$, has mass $M$, and has the same spatial density as $F_n$. Hence
its external potential energy and Coulomb energy coincide with those of $F_n$, while its kinetic
energy is zero. Therefore
\[
E_{\min}
\leq \mathcal E(\widetilde F_n)
=J(T_n,c_n,V_n)-
\iint_{\R^6}\frac{|p|^2}{2}F_n(x,p)\,dpdx
\leq J(T_n,c_n,V_n)-\kappa\tau_C M.
\]
Letting $n\to\infty$ gives
\[
J_{\min}\geq E_{\min}+\kappa\tau_C M>E_{\min}.
\]
\end{proof}

\subsection{Convergence of energy levels as \texorpdfstring{$T\downarrow0$}{T tends to zero}}
\begin{lemma}\label{lem-ET-limit}
The map $T\mapsto E_T$ is strictly increasing, and
\[
\lim_{T\downarrow0}E_T=E_{\min}.
\]
\end{lemma}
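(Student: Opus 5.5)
Monotonicity comes from a two-temperature comparison of minimizers, and the limit comes from comparing $\mathcal F_{\min}(T)$ against a fixed finite-entropy competitor close to an energy minimizer.

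\emph{Monotonicity.} Let $0<T_1<T_2$, with minimizers $F_1=F_{T_1}$ and $F_2=F_{T_2}$, which are unique by Proposition~\ref{prop-existence-minimizer}. Minimality at each temperature gives
\[
\mathcal E(F_1)+T_1H(F_1)\le \mathcal E(F_2)+T_1H(F_2),\qquad
\mathcal E(F_2)+T_2H(F_2)\le \mathcal E(F_1)+T_2H(F_1).
\]
Multiplying the first inequality by $T_2$, the second by $T_1$, and adding gives $(T_2-T_1)(\mathcal E(F_1)-\mathcal E(F_2))\le 0$. Hence $E_{T_1}\le E_{T_2}$, and also $H(F_2)\le H(F_1)$.

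For strictness, suppose $E_{T_1}=E_{T_2}$. The first inequality then gives $H(F_1)\le H(F_2)$, so $H(F_1)=H(F_2)$ and $\mathcal F_{T_1}(F_2)=\mathcal F_{T_1}(F_1)$. By uniqueness, $F_1=F_2=:F$. Now $F$ satisfies the Euler--Lagrange formula of Theorem~\ref{prop-EL-minimizer}(i) at both temperatures, with constants $c_1,c_2$. Since $F^a>0$ a.e., comparing $T_kj'(F^a)=-(e_0+c_k)$ gives
\[
\frac{|p|^2}{2}+u_1(x)=\frac{T_1}{T_2}\Bigl(\frac{|p|^2}{2}+u_2(x)\Bigr)\qquad\text{a.e.}
\]
Here $u_k=\Phi+\widetilde V_F+c_k$ share the same potential $\widetilde V_F$. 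The coefficients of $|p|^2$ differ because $T_1\ne T_2$, so this is impossible on a set of positive measure in $p$ for a.e.\ $x$. This contradiction gives strictness.

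\emph{Limit.} Since $E_T\ge E_{\min}$ trivially and $E_T$ is monotone, it suffices to prove $\limsup_{T\downarrow0}E_T\le E_{\min}$. Because $H\le0$,
\[
E_T\le \mathcal F_{\min}(T)-TH(F_T).
\]
\begin{itemize}
\item \emph{Upper bound on $\mathcal F_{\min}(T)$.} For $\varepsilon>0$, take $G\in\mathcal A$ with $\mathcal E(G)<E_{\min}+\varepsilon$. Mollify $G$ in $(x,p)$ and renormalize the mass to obtain an absolutely continuous $G_\varepsilon\in\mathcal A$ with $\mathcal E(G_\varepsilon)<E_{\min}+2\varepsilon$ and $H(G_\varepsilon)>-\infty$. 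The bound $H(G_\varepsilon)>-\infty$ follows from Lemma~\ref{lem-decay} and the moment bounds. Continuity of the Coulomb energy under mollification comes from the Fourier formula in Proposition~\ref{prop-coulomb-energy}. Then $\mathcal F_{\min}(T)\le E_{\min}+2\varepsilon+TH(G_\varepsilon)$, so $\limsup_{T\downarrow 0}\mathcal F_{\min}(T)\le E_{\min}$.
\item \emph{Control of $-TH(F_T)$.} By Lemma~\ref{lem-decay}, in the form used in the existence proof,
\[
-H(F)\le \varepsilon\iint\Bigl(\frac{|p|^2}{2}+\Phi_+\Bigr)dF+C_\varepsilon M+C_T .
\]
The moments of $F_T$ are bounded uniformly for $T\le1$, since $\mathcal F_{\min}(T)\le \mathcal F_{\min}(T_0)$ for $T\ge T_0$ and $E_T$ is monotone. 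However, $C_T$ as defined in the existence proof does not depend on the temperature, so $TC_T\to0$. Hence $-TH(F_T)\le T\bigl(\varepsilon C+C_\varepsilon M+C\bigr)\to0$.
\end{itemize}
Combining the two bullets gives $\limsup_{T\downarrow0}E_T\le E_{\min}$.

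The main obstacle is the approximation step: producing finite-entropy absolutely continuous competitors whose energy is close to $E_{\min}$. One must check that mollification of a measure with finite Coulomb energy and finite moments keeps the Coulomb energy close and does not increase the $\Phi$-moment much. This uses continuity of $\Phi$ and confinement, handled by truncating to a compact set first.
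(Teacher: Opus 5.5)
Your proposal is correct and follows essentially the same route as the paper. The monotonicity part uses the two-temperature minimality comparison, uniqueness, and the Euler--Lagrange formula at both temperatures; you get the contradiction from the $|p|^2$ coefficient, while the paper gets it from $F^a$ being a nonzero constant and hence non-integrable. The limit part uses a near-optimal competitor and the entropy bound of Lemma~\ref{lem-decay}; you obtain uniform moments for $T\le1$ from the monotonicity of $E_T$, while the paper absorbs the term $\eta T E_T$.

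One simplification: the approximation step you single out as the main obstacle is not needed. Lemma~\ref{lem-decay} already gives $-\infty<H(G)\le0$ for every $G\in\mathcal A$, so $\mathcal F_{\min}(T)\le\mathcal E(G)$ for all such $G$, and hence $\mathcal F_{\min}(T)\le E_{\min}$ for every $T>0$.
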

\begin{proof}
Let $0<T_1<T_2$, and write $F_i=F_{T_i}$, $E_i=\mathcal E(F_i)$, and $H_i=H(F_i)$. By the minimizing
property,
\[
E_1+T_1H_1\leq E_2+T_1H_2,
\]
and
\[
E_2+T_2H_2\leq E_1+T_2H_1.
\]
Adding the two inequalities gives
\[
(T_2-T_1)(H_2-H_1)\leq0,
\]
so $H_2\leq H_1$. Substituting this into the first inequality yields $E_1\leq E_2$.

If equality $E_1=E_2$ held, then also $H_1=H_2$, and both inequalities above would be equalities.
Thus $F_1$ minimizes both $\mathcal F_{T_1}$ and $\mathcal F_{T_2}$. By uniqueness of minimizers,
$F_1=F_2$. Applying Theorem \ref{prop-EL-minimizer} at the two temperatures gives constants
$c_1,c_2$ such that, on the set where $F_1^a>0$,
\[
\frac{|p|^2}{2}+\Phi+V_{F_1}+c_1=T_1h'(F_1^a),
\qquad
\frac{|p|^2}{2}+\Phi+V_{F_1}+c_2=T_2h'(F_1^a).
\]
Subtracting the two identities gives
\[
(T_1-T_2)h'(F_1^a)=c_1-c_2.
\]
Hence $h'(F_1^a)$, and therefore $F_1^a$, would be constant on the set where $F_1^a>0$. Since
$F_1^a\in L^1(\R^6)$ and $F_1^a\not\equiv0$ by Proposition \ref{prop-ac-nonzero}, this is impossible.
Therefore $E_1<E_2$.

It remains to prove the limit as $T\downarrow0$. We first record an approximation fact. For every
$\varepsilon>0$ there exists $G_\varepsilon\in\mathcal A$ such that $H(G_\varepsilon)>-\infty$ and
\[
\mathcal E(G_\varepsilon)\leq E_{\min}+\varepsilon.
\]
Indeed, choose $F\in\mathcal A$ with $\mathcal E(F)\leq E_{\min}+\varepsilon/3$. Truncate $F$ to a
large compact set in $(x,p)$ and multiply by the factor that restores mass $M$. For an increasing
sequence of such compact sets, the retained measures increase to $F$ and the restoring factors tend
to one. The weighted linear terms therefore converge by monotone convergence applied separately to
$\Phi_+$ and the bounded negative part of $\Phi$. If $\rho_R$ is the retained spatial marginal, then
\[
 \iint\frac{d\rho_R(x)d\rho_R(y)}{|x-y|}
 \uparrow
 \iint\frac{d\rho_F(x)d\rho_F(y)}{|x-y|},
\]
so the Coulomb energies converge as well. Next convolve the compactly supported phase-space measure
with a product mollifier and restore its mass if needed. The resulting smooth compactly supported
densities have finite entropy. Their kinetic and external energies converge by continuity on the
fixed compact support, while their Coulomb energies converge by the Fourier representation
\eqref{eq-fourier-coulomb} and dominated convergence. Choosing first the truncation and then the
mollification sufficiently fine gives the desired $G_\varepsilon$.

Next we prove an entropy lower bound in terms of total energy. By Lemma \ref{lem-decay}, for every
$\eta>0$ there exists $C_\eta>0$ such that
\[
h(s)\leq \eta\left(\frac{|p|^2}{2}+\Phi_+(x)\right)s+C_\eta s
+2e^{-\frac{|p|+\sqrt{\Phi_+(x)}}2}.
\]
Using the fixed mass constraint, the integrability of the exponential term, the boundedness of
$\Phi_-$, and the nonnegativity of the Coulomb energy, we obtain, for all $F\in\mathcal A$,
\[
-H(F)=\iint h(F^a)\,dpdx\leq \eta\mathcal E(F)+C_\eta.
\]
Equivalently,
\[
H(F)\geq -\eta\mathcal E(F)-C_\eta.
\]
Applying this to $F_T$ gives
\[
\mathcal F_T(F_T)=E_T+TH(F_T)
\geq (1-\eta T)E_T-TC_\eta.
\]

By minimality of $F_T$ and by the choice of $G_\varepsilon$,
\[
(1-\eta T)E_T-TC_\eta
\leq \mathcal F_T(F_T)
\leq \mathcal F_T(G_\varepsilon)
=\mathcal E(G_\varepsilon)+TH(G_\varepsilon)
\leq E_{\min}+\varepsilon+TH(G_\varepsilon).
\]
Since $H(G_\varepsilon)$ is finite and fixed, letting $T\downarrow0$ yields
\[
\limsup_{T\downarrow0}E_T\leq E_{\min}+\varepsilon.
\]
As $\varepsilon>0$ is arbitrary and $E_T\geq E_{\min}$ by definition, we conclude that
\[
\lim_{T\downarrow0}E_T=E_{\min}.
\]
\end{proof}

\subsection{A preliminary low-temperature condensed regime}

By Lemmas \ref{lem-Jmin-gap} and
\ref{lem-ET-limit}, there exists $T_{\rm low}>0$ such that
\[
E_T<J_{\min}
\qquad\text{for every }0<T<T_{\rm low}.
\]
Suppose, for contradiction, that $F_T$ has no singular part for some $0<T<T_{\rm low}$. Then Theorem
\ref{prop-EL-minimizer} shows that $F_T$ is a purely absolutely continuous normal state; hence for
some $c\in\R$ and $V=V_{F_T}$ we have $(T,c,V)\in\mathcal B_M$ and
\[
E_T=J(T,c,V)\geq J_{\min}.
\]
This contradicts $E_T<J_{\min}$. Therefore $F_T$ has a nontrivial singular part for every
$0<T<T_{\rm low}$. Thus the fixed-mass condensed-temperature set contains a nonempty interval.

\subsection{The grand-canonical obstacle problem}
For $T>0$ and $s\geq0$, define the pressure
\[
 P_T(s):=-T\int_{\R^3}\log\left(1-
 e^{-(|p|^2/2+s)/T}\right)\,dp.
\]
Then $P_T\in C^1([0,\infty))$ is strictly decreasing and convex, and
\begin{equation}\label{eq-pressure-derivative}
 P_T'(s)=-R_T(s).
\end{equation}
For a chemical potential $\mu\in\R$, set
\[
 \mathcal K_\mu:=\{V\in\dot H^1(\R^3):
 \Phi+\widetilde V-\mu\geq0\ \text{q.e.}\}
\]
and consider
\begin{equation}\label{eq-grand-obstacle-functional}
 \mathfrak J_{T,\mu}(V):=\frac1{8\pi}\int_{\R^3}|\nabla V|^2\,dx
 +\int_{\R^3}P_T(\Phi+V-\mu)\,dx,
 \qquad V\in\mathcal K_\mu.
\end{equation}
The integral is understood as an extended nonnegative integral.

\begin{lemma}[Grand-canonical obstacle state]\label{lem-grand-obstacle}
For every $T>0$ and $\mu\in\R$, the functional
$\mathfrak J_{T,\mu}$ has a unique minimizer $V_{T,\mu}$. Put
\[
 U_{T,\mu}:=\Phi+\widetilde V_{T,\mu}-\mu.
\]
There is a finite nonnegative Radon measure $\lambda_{T,\mu}$ such that
\begin{equation}\label{eq-grand-obstacle-equation}
 -\frac1{4\pi}\Delta V_{T,\mu}
 =R_T(U_{T,\mu})\,dx+\lambda_{T,\mu},
 \qquad
 U_{T,\mu}=0\quad\lambda_{T,\mu}\text{-a.e.}
\end{equation}
Both measures on the right have finite Coulomb energy. Moreover,
$V_{T,\mu}\geq0$ q.e., and
\[
 \operatorname{supp}\lambda_{T,\mu}
 \subset\{x:\Phi(x)\leq\mu\}.
\]

The phase-space measure
\begin{equation}\label{eq-grand-state}
 \mathfrak F_{T,\mu}:=
 \frac{dxdp}{\exp((|p|^2/2+U_{T,\mu}(x))/T)-1}
 +\lambda_{T,\mu}(dx)\otimes\delta_0(dp)
\end{equation}
is the unique minimizer of the grand potential
\[
 F\longmapsto\mathcal F_T(F)-\mu F(\R^6)
\]
over the union of the admissible classes of all finite masses. In particular,
$\lambda_{T,\mu}=0$ if and only if \eqref{eq-grand-state} is a normal state; in that case it is the
unique canonical minimizer at its own mass.
\end{lemma}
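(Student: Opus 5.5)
The plan is to treat $\mathfrak J_{T,\mu}$ as a strictly convex, coercive functional on the closed convex set $\mathcal K_\mu$, obtain the minimizer by the direct method, read off \eqref{eq-grand-obstacle-equation} from the variational inequality, and then identify $\mathfrak F_{T,\mu}$ through a Legendre-type duality with $\mathcal F_T-\mu F(\R^6)$.

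\textbf{Existence and uniqueness of $V_{T,\mu}$.} First, $\mathcal K_\mu\neq\emptyset$. Take $V=\phi\ge0$ smooth, compactly supported and large on the compact set $\{\Phi\le\mu+1\}$. Then $\Phi+\phi-\mu\ge1$ everywhere, and $\int P_T(\Phi+\phi-\mu)\,dx<\infty$ because $P_T(s)\le Ce^{-s/(2T)}$ for large $s$ and the confinement assumption holds. The functional is bounded below by $0$, since $P_T\ge0$.

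Coercivity in $\dot H^1$ is immediate from the Dirichlet term. $\mathcal K_\mu$ is convex, and it is weakly closed: if $V_n\rightharpoonup V$, Mazur's lemma gives convex combinations converging strongly, and a subsequence of these converges q.e. So the q.e. inequality passes to the limit.

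Lower semicontinuity of the $P_T$ term follows from Fatou along an a.e. convergent subsequence, using $L^1_{\rm loc}$ compactness. Strict convexity follows from strict convexity of the Dirichlet term in $\nabla V$ together with convexity of $P_T$, which gives uniqueness.

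To get $V_{T,\mu}\ge0$ q.e., compare with $\max(V,0)$. This stays in $\mathcal K_\mu$ once we know $\Phi-\mu\ge0$ where $V<0$, which is not automatic. Instead I would test with $V+t\varphi$ for $\varphi\ge0$. This gives $-\Delta V\ge0$ as a measure, so $V$ is superharmonic, lies in $\dot H^1$, and is therefore $\ge0$ q.e.

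\textbf{Euler--Lagrange equation.} For $\varphi\ge0$ in $\dot H^1$ with compact support, $V+t\varphi\in\mathcal K_\mu$ for $t>0$. Differentiation is justified because $P_T'=-R_T$ is bounded by $R_T(0)$ on compact sets. This gives
\[
\frac1{4\pi}\int\nabla V\cdot\nabla\varphi\,dx-\int R_T(U)\varphi\,dx\ge0 .
\]
So $\lambda:=-\frac1{4\pi}\Delta V-R_T(U)\,dx$ is a nonnegative Radon measure.

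It is finite with finite Coulomb energy, since $0\le\lambda\le-\frac1{4\pi}\Delta V$ and the latter has finite energy by Proposition~\ref{prop-coulomb-energy}. I would check finiteness of the total mass via $R_T(U)\le R_T(\Phi-\mu)$, which holds because $V\ge0$.

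For the contact condition, test with $w=V+t(\psi-V)$ for $\psi\in\mathcal K_\mu$, using the quasi-continuous pairing of Proposition~\ref{prop-quasicont}. In the region where $U\ge\delta$, local perturbations $\pm\varphi$ are admissible after truncation. This gives $\int U\,d\lambda=0$, and hence $U=0$ $\lambda$-a.e. Since $V\ge0$, we have $\Phi\le\mu$ on the contact set, which gives the support statement exactly as in Theorem~\ref{prop-EL-minimizer}(ii).

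\textbf{Grand potential minimality.} This is the step I expect to be the main obstacle. The analogue of the expansion \eqref{expansion} in Lemma~\ref{lem-expansion} should be
\[
\mathcal F_T(F)-\mu F(\R^6)-\big[\mathcal F_T(\mathfrak F)-\mu\mathfrak F(\R^6)\big]
=\iint\Big(\tfrac{|p|^2}2+U\Big)dF^s+T\,d(F^a,\mathfrak F^a)+\tfrac1{8\pi}\|\nabla(V_F-V)\|_2^2,
\]
where the singular part of $\mathfrak F$ contributes zero because $U=0$ $\lambda$-a.e. and $p=0$.

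To derive it, I would use $\frac{|p|^2}2+U=Th'(\mathfrak F^a)$ pointwise and the identity $\int V\,d\rho_F=\frac1{4\pi}\int\nabla V\cdot\nabla V_F\,dx$ from Proposition~\ref{prop-quasicont}. The delicate points are finiteness of each term and the handling of $F$ with different mass, which has no constraint because the chemical potential absorbs it.

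All three terms on the right are nonnegative, using $U\ge0$ q.e. and the fact that $\rho_{F^s}$ does not charge polar sets. So $\mathfrak F$ is a minimizer. Uniqueness follows as in Lemma~\ref{lem-expansion}: vanishing of the terms forces $F^a=\mathfrak F^a$ and $V_F=V$, hence the same $\rho_F$. The singular part is then fixed at zero momentum by the first term.

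Finally, if $\lambda=0$ then $\mathfrak F$ is normal. It minimizes $\mathcal F_T$ among states of its own mass because the $\mu F(\R^6)$ term is constant there, and uniqueness of the canonical minimizer is Proposition~\ref{prop-existence-minimizer}.
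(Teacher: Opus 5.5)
Your proposal is correct and follows essentially the paper's route: the direct method on the convex, weakly closed set $\mathcal K_\mu$, the reaction measure $\lambda_{T,\mu}$ read off from one-sided variations, and the grand-canonical analogue of the expansion in Lemma~\ref{lem-expansion} for minimality and uniqueness. Two small corrections. First, $\max(V,0)$ lies in $\mathcal K_\mu$ automatically because it dominates $V$, and it lowers the pressure term because $P_T$ is decreasing; this is how the paper gets $V_{T,\mu}\ge0$, although your superharmonicity argument also works. Second, finiteness of $\lambda_{T,\mu}$ should come from its compact support in $\{\Phi\le\mu\}$, not from Proposition~\ref{prop-coulomb-energy}, which presupposes a finite measure (finite Coulomb energy alone does not give finite mass).
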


\begin{proof}
The set $\mathcal K_\mu$ is nonempty: a smooth compactly supported function can be chosen above the
continuous compactly supported obstacle $(\mu-\Phi)_+$. It is convex and weakly closed in
$\dot H^1$. Indeed, the usual quasi-everywhere convergence theorem for bounded sequences in
$\dot H^1$, applied after taking convex combinations, preserves the obstacle inequality. Since
$P_T\geq0$, the Dirichlet term makes every minimizing sequence bounded in $\dot H^1$.
Fatou's lemma along a quasi-everywhere convergent sequence gives weak lower semicontinuity. Hence a
minimizer exists. The strict convexity of the Dirichlet term on $\dot H^1$ gives uniqueness.

Replacing the minimizer by its positive part preserves the obstacle constraint, does not increase
the Dirichlet term, and decreases the pressure term. Thus $V_{T,\mu}\geq0$ q.e. Consequently,
\[
 U_{T,\mu}\geq(\Phi-\mu)_+\quad\text{q.e.},
\]
and therefore both $R_T(U_{T,\mu})$ and $P_T(U_{T,\mu})$ are integrable by the standing confinement
assumption and their exponential decay at infinity.

The variational inequality is
\begin{equation}\label{eq-grand-VI}
 \frac1{4\pi}\int_{\R^3}\nabla V_{T,\mu}\cdot
 \nabla(W-V_{T,\mu})\,dx
 -\int_{\R^3}R_T(U_{T,\mu})(W-V_{T,\mu})\,dx\geq0
\end{equation}
for every $W\in\mathcal K_\mu$ for which the displayed terms are finite. Taking nonnegative test increments $W=V_{T,\mu}+\varphi$ for $\varphi \geq0$, $\varphi\in C_c^\infty(\R^3)$, shows that
\[
 \lambda_{T,\mu}:=-\frac1{4\pi}\Delta V_{T,\mu}
 -R_T(U_{T,\mu})\,dx
\]
is a positive distribution, hence a Radon measure. The normal-cone characterization of the
obstacle constraint in \eqref{eq-grand-VI} gives
$U_{T,\mu}=0$ $\lambda_{T,\mu}$-a.e. The measure does not charge sets of Newtonian capacity zero,
because it belongs locally to $H^{-1}$; hence the quasi-everywhere statement is sufficient here.
On the contact set, $V_{T,\mu}=\mu-\Phi\geq0$, so the reaction is supported in the compact set
$\{\Phi\leq\mu\}$. It is therefore finite. Equation \eqref{eq-grand-obstacle-equation} and the
$\dot H^1$ bound show that the total measure on its right has finite Coulomb energy by Proposition \ref{prop-coulomb-energy}, and then so do
its two nonnegative summands.

Let $f_{T,\mu}$ denote the density in \eqref{eq-grand-state}. Since
$U_{T,\mu}\geq(\Phi-\mu)_+$, direct integration of the Bose factor against
$1+|p|^2/2+\Phi_+(x)$, and likewise of $h(f_{T,\mu})$, gives finite mass, moments, and entropy.
Together with the compact support of the reaction and its finite Coulomb energy, this shows that
$\mathfrak F_{T,\mu}$ is admissible. For any admissible $F$, the Euler identity
\[
 \frac{|p|^2}{2}+U_{T,\mu}=T h'(f_{T,\mu})
\]
and the Coulomb polarization formula give
\begin{align}\label{eq-grand-expansion}
 &\mathcal F_T(F)-\mu F(\R^6)
 -\bigl(\mathcal F_T(\mathfrak F_{T,\mu})
       -\mu\mathfrak F_{T,\mu}(\R^6)\bigr) \notag\\
 &\quad=T d(F^a,f_{T,\mu})
 +\iint_{\R^6}\left(\frac{|p|^2}{2}+U_{T,\mu}(x)\right)dF^s(x,p)
 +\frac1{8\pi}\|\nabla(V_F-V_{T,\mu})\|_2^2,
\end{align}
where $d(\cdot,\cdot)$ is defined in Definition \ref{D.d.def}.
The singular part of the reference state contributes zero because it is supported where $p=0$ and
$U_{T,\mu}=0$. Every term on the right is nonnegative. Equality fixes the regular density, the
spatial marginal through the potential, and then the singular phase-space measure through the zero
set of $|p|^2/2+U_{T,\mu}$. This proves uniqueness and all remaining assertions.
\end{proof}

We use the following zero-set form of Kato's inequality. If $w\geq0$ quasi-everywhere,
$w\in W^{1,1}_{\rm loc}$, and $\Delta w$ is a Radon measure diffuse with respect to Newtonian
capacity, then
\begin{equation}\label{eq-kato-zero-set}
 (\Delta w)\mathbin{\vrule height 1.4ex depth -0.3ex width 0.07ex
 \vrule height 0.07ex depth -0.02ex width 0.8ex}_{\{\widetilde w=0\}}\geq0.
\end{equation}
Here and below the zero set is defined using the precise representative. This is the
measure-valued Kato inequality; see \cite[Corollary~1.3]{BrezisPonce2004}.

\subsection{Comparison in temperature and chemical potential}
\begin{lemma}[Temperature comparison]\label{lem-temperature-comparison}
Let $0<T_1<T_2$ and fix $\mu\in\R$. Write
\[
 V_i=V_{T_i,\mu},\quad U_i=\Phi+V_i-\mu,
 \quad\lambda_i=\lambda_{T_i,\mu}.
\]
Then $V_1\leq V_2$ q.e. If $\lambda_1=0$, then $\lambda_2=0$ and
\begin{equation}\label{eq-temperature-mass-comparison}
 \int_{\R^3}R_{T_1}(U_1)\,dx
 \leq\int_{\R^3}R_{T_2}(U_2)\,dx.
\end{equation}
\end{lemma}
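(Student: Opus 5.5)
The plan is a weak comparison argument for the two obstacle variational inequalities \eqref{eq-grand-VI}. The only structural input is that $T\mapsto R_T(s)$ is increasing for fixed $s\ge0$ (the Bose factor increases with $T$), while $s\mapsto R_T(s)$ is decreasing.

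\textbf{Step 1: $V_1\le V_2$ q.e.} Both $V_1$ and $V_2$ lie in $\mathcal K_\mu$. The lattice operations preserve the obstacle constraint $\Phi+\widetilde V-\mu\ge0$ q.e., so $\min\{V_1,V_2\}$ and $\max\{V_1,V_2\}$ also lie in $\mathcal K_\mu$.
\begin{itemize}
\item Test \eqref{eq-grand-VI} for $V_1$ with $W=\min\{V_1,V_2\}=V_1-w$, where $w:=(V_1-V_2)_+\in\dot H^1$.
\item Test \eqref{eq-grand-VI} for $V_2$ with $W=\max\{V_1,V_2\}=V_2+w$.
\end{itemize}
Adding the two inequalities gives
\[
 \frac1{4\pi}\int|\nabla w|^2\,dx\le\int\bigl(R_{T_1}(U_1)-R_{T_2}(U_2)\bigr)w\,dx .
\]
On $\{w>0\}$ we have $U_1>U_2\ge0$, hence
\[
 R_{T_1}(U_1)\le R_{T_1}(U_2)\le R_{T_2}(U_2).
\]
So the right side is $\le0$, which forces $w=0$, i.e. $V_1\le V_2$ q.e.

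The technical point is admissibility of these test functions: the displayed terms must be finite. The $R_{T_i}(U_i)$ are bounded by $R_{T_i}(0)$ and decay like $e^{-(\Phi-\mu)_+/(2T)}$, while $w\in L^6$. This gives integrability by Hölder and the confinement assumption. If needed, one can first truncate $w$ by $\min\{w,k\}$ times a cutoff.

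\textbf{Step 2: $\lambda_1=0\Rightarrow\lambda_2=0$.} Set $w:=V_2-V_1\ge0$ q.e. By \eqref{eq-grand-obstacle-equation},
\[
 \Delta w=4\pi\bigl(R_{T_1}(U_1)-R_{T_2}(U_2)\bigr)dx-4\pi\lambda_2 ,
\]
a Radon measure of locally finite energy, hence diffuse with respect to capacity. The Kato inequality \eqref{eq-kato-zero-set} gives $(\Delta w)\llcorner\{\widetilde w=0\}\ge0$.

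Now $\lambda_2$ is concentrated on $\{U_2=0\}$. There $0\le U_1\le U_2=0$ q.e., so $\widetilde w=0$ $\lambda_2$-a.e., because $\lambda_2$ charges no capacity-zero sets. On $\{\widetilde w=0\}$ we have $U_1=U_2$, so the absolutely continuous part equals $4\pi(R_{T_1}(U_1)-R_{T_2}(U_1))\le0$. Restricting to the zero set therefore yields
\[
 0\le(\Delta w)\llcorner\{\widetilde w=0\}\le-4\pi\lambda_2 ,
\]
and hence $\lambda_2=0$.

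\textbf{Step 3: the mass inequality \eqref{eq-temperature-mass-comparison}.} Pointwise comparison fails here, since $U_1\le U_2$ pushes $R$ the wrong way, so I read the masses off the potentials at infinity instead.
\begin{itemize}
\item With $\lambda_1=\lambda_2=0$, each $V_i$ is in $\dot H^1$ and solves $-\Delta V_i=4\pi\rho_i$ with $\rho_i=R_{T_i}(U_i)\in L^1$ of finite Coulomb energy. By Liouville in $\dot H^1$, $V_i=|x|^{-1}*\rho_i$.
\item For a finite positive measure $\rho$, Newton's theorem gives the spherical mean
\[
 \fint_{\partial B_r}V_\rho\,dS=\int\min\{1/r,1/|y|\}\,d\rho(y),
\]
so $r\fint_{\partial B_r}V_\rho\to\rho(\R^3)$ as $r\to\infty$.
\item Averaging the inequality $V_1\le V_2$ (valid a.e. on spheres for a.e. $r$) and letting $r\to\infty$ gives $\int\rho_1\le\int\rho_2$.
\end{itemize}

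I expect the main obstacle to be Step 2: correctly identifying that $\lambda_2$ lives on the zero set of the precise representative of $w$, and justifying the application of the Brezis--Ponce inequality (the $W^{1,1}_{\rm loc}$ regularity and the diffuseness of $\Delta w$).
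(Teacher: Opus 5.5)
Your proposal is correct and follows the paper's proof essentially step for step. You use the lattice competitors $\min\{V_1,V_2\}$ and $\max\{V_1,V_2\}$ in \eqref{eq-grand-VI} to get $V_1\le V_2$; you apply the zero-set Kato inequality \eqref{eq-kato-zero-set} to $w=V_2-V_1$, where $\lambda_2$ lives on $\{\widetilde w=0\}$ and the absolutely continuous part is nonpositive there, which forces $\lambda_2=0$; and you use Newton's spherical-average identity for the mass comparison. Your explicit chain $R_{T_1}(U_1)\le R_{T_1}(U_2)\le R_{T_2}(U_2)$ and the Liouville identification $V_i=|x|^{-1}*\rho_i$ simply spell out details that the paper leaves implicit.
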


\begin{proof}
The series representation
\[
 R_T(s)=(2\pi T)^{3/2}\sum_{k=1}^\infty k^{-3/2}e^{-ks/T}
\]
shows that $R_T(s)$ is strictly increasing in $T$ for every $s\geq0$. Let
$q=(V_1-V_2)_+$. The lattice competitors $V_1-q=\min\{V_1,V_2\}$ and
$V_2+q=\max\{V_1,V_2\}$ belong to $\mathcal K_\mu$. Adding their two variational inequalities 
\[
-\frac1{4\pi}\int_{\R^3} \nabla V_1 \cdot \nabla q\,dx + \int_{\R^3}R_{T_1}(U_1)q\,dx \geq 0 , \quad \mbox{and} \quad \frac1{4\pi}\int_{\R^3} \nabla V_2 \cdot \nabla q\,dx - \int_{\R^3}R_{T_2}(U_2)q\,dx \geq 0
\]
gives
\[
 \frac1{4\pi}\int_{\R^3}|\nabla q|^2\,dx
 \leq\int_{\{q>0\}}\bigl(R_{T_1}(U_1)-R_{T_2}(U_2)\bigr)q\,dx\leq0,
\]
because $U_1>U_2$ on $\{q>0\}$. Hence $V_1\leq V_2$ q.e.

Assume now that $\lambda_1=0$ and set $w=V_2-V_1\geq0$. Subtracting the obstacle equations yields
\begin{equation}\label{eq-temperature-difference}
 -\frac1{4\pi}\Delta w
 =\bigl(R_{T_2}(U_2)-R_{T_1}(U_1)\bigr)\,dx+\lambda_2.
\end{equation}
Since $U_2-U_1=w$, the contact condition for $\lambda_2$ implies
\[
 \lambda_2\bigl(\{\widetilde w>0\}\bigr)=0.
\]
On the zero set $Z=\{\widetilde w=0\}$ one has $U_1=U_2$, and therefore the right-hand side of
\eqref{eq-temperature-difference}, restricted to $Z$, is a nonnegative measure. On the other hand,
\eqref{eq-kato-zero-set} gives
\[
 \left(-\Delta w\right)\mathbin{\vrule height 1.4ex depth -0.3ex width 0.07ex
 \vrule height 0.07ex depth -0.02ex width 0.8ex}_{Z}\leq0.
\]
Thus both sides restricted to $Z$ vanish, and in particular $\lambda_2=0$.

The two potentials are now Newtonian potentials of the integrable bounded densities
$\rho_i=R_{T_i}(U_i)$. Newton's spherical-average identity gives, for every $R>0$,
\[
 \frac{R}{4\pi}\int_{\mathbb S^2}w(R\omega)\,d\omega
 =\int_{\R^3}\frac{R}{\max\{R,|y|\}}\,(\rho_2-\rho_1)dy.
\]
The left-hand side is nonnegative. Letting $R\to\infty$ and using dominated convergence on the
right proves \eqref{eq-temperature-mass-comparison}.
\end{proof}

\begin{lemma}[Chemical-potential comparison]\label{lem-chemical-comparison}
Fix $T>0$. If $\lambda_{T,\mu}=0$ for some $\mu\in\R$, then
$\lambda_{T,\nu}=0$ for every $\nu<\mu$. The corresponding normal masses
\[
 m_T(\nu):=\int_{\R^3}R_T(\Phi+V_{T,\nu}-\nu)\,dx,
 \qquad \nu\leq\mu,
\]
are nondecreasing and continuous in $\nu$, and
\begin{equation}\label{eq-mass-minus-infinity}
 \lim_{\nu\to-\infty}m_T(\nu)=0.
\end{equation}
Consequently, every mass in $(0,m_T(\mu)]$ is attained by a normal state at temperature $T$.
\end{lemma}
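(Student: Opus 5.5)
The argument runs parallel to Lemma~\ref{lem-temperature-comparison}, with the chemical potential playing the role of the temperature. Fix $\nu<\mu$ and write
\[
V_\nu:=V_{T,\nu},\qquad U_\nu:=\Phi+V_\nu-\nu,\qquad \lambda_\nu:=\lambda_{T,\nu},
\]
and similarly for $\mu$.

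\textbf{Step 1: ordering of the potentials.} I will show $V_\nu\le V_\mu$ q.e. Set $q=(V_\nu-V_\mu)_+$. Then $\min\{V_\nu,V_\mu\}\in\mathcal K_\nu$, since on $\{q>0\}$ it equals $V_\mu$ and $\Phi+V_\mu-\nu\ge\Phi+V_\mu-\mu\ge0$. Also $\max\{V_\nu,V_\mu\}\in\mathcal K_\mu$, since it dominates $V_\mu$. Adding the two variational inequalities \eqref{eq-grand-VI} gives
\[
\frac1{4\pi}\|\nabla q\|_2^2\le\int_{\{q>0\}}\bigl(R_T(U_\nu)-R_T(U_\mu)\bigr)q\,dx.
\]
On $\{q>0\}$ we have $U_\nu-U_\mu=(V_\nu-V_\mu)+(\mu-\nu)>0$, and $R_T$ is decreasing, so the right-hand side is $\le0$. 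Hence $q=0$.

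\textbf{Step 2: the reaction vanishes.} Assume $\lambda_\mu=0$ and put $w=V_\mu-V_\nu\ge0$. Subtracting the obstacle equations \eqref{eq-grand-obstacle-equation} gives
\[
-\tfrac1{4\pi}\Delta w=\bigl(R_T(U_\mu)-R_T(U_\nu)\bigr)dx-\lambda_\nu .
\]
The sign is now reversed compared with the temperature case, so Kato's inequality has to be applied to the zero set of a different function. The natural candidate is $U_\nu=\Phi+V_\nu-\nu$, which carries $\lambda_\nu$, but it is not a potential difference. Instead I use that $\lambda_\nu$ lives on $\{U_\nu=0\}$. There $U_\mu=U_\nu-w-(\mu-\nu)<0$ wherever $w\ge0$, which is impossible q.e. because $U_\mu\ge0$ q.e. and $w\ge0$. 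So $\{U_\nu=0\}$ has zero capacity. Since $\lambda_\nu$ charges no capacity-zero set, $\lambda_\nu=0$. This makes the case simpler than temperature comparison: no Kato argument is needed, only Step 1.

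\textbf{Step 3: monotonicity and continuity of the mass.} For $\nu'<\nu\le\mu$ both states are normal. Newton's spherical-average identity, applied exactly as at the end of Lemma~\ref{lem-temperature-comparison} to $w=V_\nu-V_{\nu'}\ge0$, gives $m_T(\nu')\le m_T(\nu)$.

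For continuity, a normal state at $\nu$ is the solution $V_{T,c}$ of \eqref{ell} with $c=-\nu$, by the uniqueness statements of Lemmas~\ref{lem-exist-ell} and \ref{lem-grand-obstacle}. However, $\nu$ may exceed $\Phi_{\min}$, so Lemma~\ref{lem-continuity-MTc} does not apply directly. I will instead redo its argument:
\begin{itemize}
\item a uniform $\dot H^1$ bound on bounded $\nu$-intervals follows from the Dirichlet energy bound $\mathfrak J_{T,\nu}(V_\nu)\le\mathfrak J_{T,\nu}(W)$ with a fixed competitor $W\in\mathcal K_{\mu}$;
\item extract a weakly convergent subsequence and pass to the limit in the weak equation, using dominated convergence with the bound $R_T\le R_T(0)$ locally;
\item identify the limit by uniqueness;
\item control the tails with the bound $R_T(U_\nu)\le R_T((\Phi-\nu)_+)$, which is uniform for $\nu$ bounded above.
\end{itemize}
The limit \eqref{eq-mass-minus-infinity} follows from $m_T(\nu)\le\int R_T((\Phi-\nu)_+)dx\to0$ by dominated convergence.

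\textbf{Step 4: conclusion.} By the intermediate value theorem on $(-\infty,\mu]$, every mass in $(0,m_T(\mu)]$ is attained by some normal $\mathfrak F_{T,\nu}$. By Lemma~\ref{lem-grand-obstacle}, this state is the canonical minimizer at that mass.

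I expect the main obstacle to be the continuity in Step 3, specifically verifying that the limit of the obstacle states stays in the constraint class. For $\nu<\mu$ the constraint is automatic from positivity, and the potentials are monotone in $\nu$ by Step 1, so monotone convergence should suffice.
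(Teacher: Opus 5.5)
Steps 1, 3 and 4 are essentially the paper's argument. Your uniform $\dot H^1$ bound via a fixed competitor $W\in\mathcal K_\mu\subset\mathcal K_\nu$ is a harmless variant. Step 2, however, fails because of a sign error. Since $V_\mu=V_\nu+w$,
\[
U_\mu=\Phi+V_\nu+w-\mu=U_\nu+w-(\mu-\nu),
\]
not $U_\nu-w-(\mu-\nu)$. On $\{U_\nu=0\}$ this gives $U_\mu=w-(\mu-\nu)$, which is perfectly compatible with $U_\mu\ge0$ and $w\ge0$. It only tells you that $w\ge\mu-\nu$ $\lambda_\nu$-a.e. Nothing in your argument shows that $\{U_\nu=0\}$ has zero capacity. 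Your conclusion that ``no Kato argument is needed'' is therefore wrong: this is exactly the step where Kato's inequality is needed. Steps 3 and 4 use that every state with $\nu\le\mu$ is normal, so they depend on repairing this.

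Two ingredients are missing.

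\textbf{The reverse bound.} You need $w\le\mu-\nu$ q.e., i.e.\ $Q:=U_\nu-U_\mu=(\mu-\nu)-w\ge0$ q.e. Test
\[
-\tfrac1{4\pi}\Delta Q=\bigl(R_T(U_\nu)-R_T(U_\mu)\bigr)dx+\lambda_\nu
\]
with $Q_-=(w-(\mu-\nu))_+\in\dot H^1$. The left side gives $-\frac1{4\pi}\|\nabla Q_-\|_2^2\le0$. The right side is $\ge0$, because $R_T(U_\nu)>R_T(U_\mu)$ where $Q<0$. Hence $Q_-=0$. Combined with the corrected identity and $U_\mu\ge0$ q.e., this forces $U_\mu=Q=0$ $\lambda_\nu$-a.e., so $\lambda_\nu$ is carried by $\{\widetilde Q=0\}$.

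\textbf{The Kato step.} Although $U_\nu$ itself is not a potential difference, $Q$ is one up to the constant $\mu-\nu$. Its Laplacian $-\Delta w$ is therefore a Radon measure not charging capacity-zero sets, so \eqref{eq-kato-zero-set} applies and gives $(-\Delta Q)\llcorner\{\widetilde Q=0\}\le0$. On $\{\widetilde Q=0\}$ one has $U_\nu=U_\mu$, so the absolutely continuous term vanishes there. Only $\lambda_\nu\le0$ remains, hence $\lambda_\nu=0$.

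\textbf{A minor point in Step 3.} The constraint $\Phi+V-\nu\ge0$ for the limit is not ``automatic from positivity'' when $\nu>\Phi_{\min}$. It does pass to the limit, from a.e.\ convergence together with continuity of the normal potentials (Newton potentials of $L^1\cap L^\infty$ densities).
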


\begin{proof}
Fix $\nu<\mu$ and abbreviate $V_\alpha=V_{T,\alpha}$,
$U_\alpha=\Phi+V_\alpha-\alpha$. The same lattice argument as above, now using the nested
obstacles, gives $V_\nu\leq V_\mu$. Set
\[
 Q:=U_\nu-U_\mu=V_\nu-V_\mu+\mu-\nu.
\]
Since $Q_-\leq(V_\mu-V_\nu)_+$, its negative part is an admissible $\dot H^1$ test function.
Subtracting the equations and testing by $Q_-$ gives
\[
 -\frac1{4\pi}\int_{\R^3}|\nabla Q_-|^2\,dx
 =\int_{\R^3}\bigl(R_T(U_\nu)-R_T(U_\mu)\bigr)Q_-\,dx
 +\int_{\R^3}Q_-\,d\lambda_{T,\nu}\geq0.
\]
Indeed, $R_T(U_\nu)>R_T(U_\mu)$ wherever $Q<0$. Hence $Q\geq0$ q.e. The reaction
$\lambda_{T,\nu}$ is supported on $\{U_\nu=0\}$, and there $U_\nu=U_\mu+Q$ forces
$U_\mu=Q=0$. Restricting
\[
 -\frac1{4\pi}\Delta Q
 =\bigl(R_T(U_\nu)-R_T(U_\mu)\bigr)\,dx+\lambda_{T,\nu}
\]
to $\{\widetilde Q=0\}$ and applying \eqref{eq-kato-zero-set} yields
$\lambda_{T,\nu}=0$.

Since $V_\mu-V_\nu\geq0$ and both states are normal, the same spherical-average argument as in
Lemma~\ref{lem-temperature-comparison} shows that $m_T(\nu)\leq m_T(\mu)$. Applying this to any
two parameters below $\mu$ proves monotonicity.

We record the continuity argument because it is also needed at the endpoint. Let
$\nu_n\to\nu\leq\mu$, with $\nu_n\leq\mu$. The bounds
\[
 0\leq R_T(U_{\nu_n})
 \leq R_T((\Phi-\nu_n)_+)
 \leq R_T((\Phi-\mu')_+)
\]
hold for a fixed $\mu'>\mu$ and all large $n$. The last function belongs to
$L^1\cap L^\infty$. Hence the masses, Newton potentials, and $\dot H^1$ norms are uniformly
bounded. After extraction, $V_{\nu_n}$ converges weakly in $\dot H^1$, almost everywhere, and in
$L^q_{\rm loc}$ for $q<6$. Dominated convergence in the Poisson equation identifies the limit with
$V_{T,\nu}$ by uniqueness in Lemma~\ref{lem-grand-obstacle}; it also gives
$m_T(\nu_n)\to m_T(\nu)$. Finally,
\[
 0\leq m_T(\nu)\leq\int_{\R^3}R_T(\Phi-\nu)\,dx
\]
for all sufficiently negative $\nu$. The right-hand side tends to zero by dominated convergence and
the confinement assumption, proving \eqref{eq-mass-minus-infinity}. The intermediate value theorem
now gives the last assertion.
\end{proof}

\begin{corollary}[Monotonicity of the canonical normal phase]\label{cor-normal-monotone}
If the canonical minimizer of mass $M$ is normal at temperature $T_1$, then it is normal at every
$T_2>T_1$. If the canonical minimizer of mass $M_2$ is normal at temperature $T$, then it is normal
at every mass $0<M_1<M_2$.
\end{corollary}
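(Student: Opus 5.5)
The plan is to pass between the canonical and grand-canonical descriptions. By Theorem~\ref{prop-EL-minimizer}, the canonical minimizer $F$ of mass $M$ at temperature $T$ comes with a constant $c$. Set $\mu:=-c$. We first check that $F$ coincides with the grand-canonical state $\mathfrak F_{T,\mu}$ of Lemma~\ref{lem-grand-obstacle}. For any admissible $G$ of any finite mass, the expansion \eqref{eq-grand-expansion} holds with $(f_{T,\mu},U_{T,\mu})$ replaced by $(F^a,u_0)$. It uses only three ingredients:
\begin{itemize}
\item[(a)] the Euler identity $|p|^2/2+u_0=Th'(F^a)$ from part (i);
\item[(b)] the fact that $F^s$ sits on $\{p=0,\ u_0=0\}$, from part (ii);
\item[(c)] the Coulomb polarization formula.
\end{itemize}
Every term on the right of that expansion is nonnegative, using $u_0\ge0$ q.e.\ and the fact that finite-energy singular parts do not charge capacity-zero sets. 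Hence $F$ minimizes the grand potential $\mathcal F_T-\mu F(\R^6)$, and uniqueness in Lemma~\ref{lem-grand-obstacle} gives $F=\mathfrak F_{T,\mu}$. In particular $V_F=V_{T,\mu}$, $\lambda_{T,\mu}=\rho_F^s$, and $M=\int R_T(U_{T,\mu})\,dx+\lambda_{T,\mu}(\R^3)$. Consequently, $F$ is normal if and only if $\lambda_{T,\mu}=0$. Conversely, if $\lambda_{T,\mu'}=0$, the last clause of Lemma~\ref{lem-grand-obstacle} says $\mathfrak F_{T,\mu'}$ is the canonical minimizer at its own mass.

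\emph{Temperature monotonicity.} Suppose the minimizer of mass $M$ is normal at $T_1$, and let $\mu$ be its chemical potential, so $\lambda_{T_1,\mu}=0$ and $M=\int R_{T_1}(U_1)\,dx$. Lemma~\ref{lem-temperature-comparison} gives $\lambda_{T_2,\mu}=0$ and $m_{T_2}(\mu)\ge M$, where $m_{T_2}(\mu)$ is the mass of the normal state $\mathfrak F_{T_2,\mu}$. Lemma~\ref{lem-chemical-comparison} at temperature $T_2$ then does two things. It shows every mass in $(0,m_{T_2}(\mu)]$ is attained by a normal grand-canonical state $\mathfrak F_{T_2,\nu}$ with $\nu\le\mu$. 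In particular mass $M$ is attained this way, and that state is the canonical minimizer of mass $M$ at $T_2$. By uniqueness (Proposition~\ref{prop-existence-minimizer}), the canonical minimizer at $T_2$ is normal.

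\emph{Mass monotonicity.} Suppose the minimizer of mass $M_2$ is normal at $T$ with chemical potential $\mu$, so $\lambda_{T,\mu}=0$ and $m_T(\mu)=M_2$. Lemma~\ref{lem-chemical-comparison} gives a normal $\mathfrak F_{T,\nu}$ of mass $M_1\in(0,M_2)$. That state is the canonical minimizer at mass $M_1$, so the minimizer at $M_1$ is normal.

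The main obstacle is the identification step: justifying the expansion \eqref{eq-grand-expansion} for the canonical minimizer. This requires that the pairing of $u_0$ with $G^s$ and the entropy-difference term are finite, and that the linear terms rearrange correctly when the mass of $G$ differs from $M$. I would reuse the proof of Lemma~\ref{lem-grand-obstacle} verbatim, checking integrability via $u_0\ge(\Phi+c)_+$ q.e. This bound needs $V_F\ge0$, which holds since $V_F$ is a Newtonian potential of a positive measure.
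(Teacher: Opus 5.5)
Your proposal is correct and follows the paper's proof. You identify the normal canonical minimizer with the grand-canonical state $\mathfrak F_{T,\mu}$ at $\mu=-c$, transfer normality to higher temperature via Lemma~\ref{lem-temperature-comparison}, and recover the prescribed mass from the intermediate-value conclusion of Lemma~\ref{lem-chemical-comparison} together with the last clause of Lemma~\ref{lem-grand-obstacle}; the only difference is that you spell out, via the expansion \eqref{eq-grand-expansion}, the identification step that the paper leaves implicit when it writes ``$\lambda_{T_1,\mu}=0$ and $m_{T_1}(\mu)=M$.''
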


\begin{proof}
Let $\mu$ be the chemical potential of the first normal minimizer. By
Lemma~\ref{lem-grand-obstacle}, $\lambda_{T_1,\mu}=0$ and $m_{T_1}(\mu)=M$.
Lemma~\ref{lem-temperature-comparison} gives
$\lambda_{T_2,\mu}=0$ and $m_{T_2}(\mu)\geq M$. Lemma~\ref{lem-chemical-comparison} then supplies a
$\nu\leq\mu$ with $m_{T_2}(\nu)=M$. The resulting normal state is the unique canonical minimizer by
Lemma~\ref{lem-grand-obstacle}. This proves temperature monotonicity. The same chemical-potential
comparison, without changing $T$, proves mass monotonicity.
\end{proof}

\subsection{Closedness of the normal phase and proof of the sharp transition}
\begin{lemma}[Closed normal graph]\label{lem-normal-closed}
Let $T_n\to T>0$ and $M_n\to M>0$. If the canonical minimizer $F_n$ at $(T_n,M_n)$ is normal for
every $n$, then the canonical minimizer at $(T,M)$ is normal.
\end{lemma}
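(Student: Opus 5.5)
The plan is to pass to the limit in the normal-state characterization and then verify that the limit is a normal minimizer.

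\textbf{Step 1: the normal minimizers.} By Lemma~\ref{lem-grand-obstacle} and Theorem~\ref{prop-EL-minimizer}, each $F_n$ is the density
\[
 f_n=\bigl(\exp((|p|^2/2+U_n)/T_n)-1\bigr)^{-1},
 \qquad U_n=\Phi+V_n-\mu_n\geq0\ \text{q.e.},
\]
with $-\Delta V_n=4\pi R_{T_n}(U_n)$ and $\int R_{T_n}(U_n)\,dx=M_n$. Here $\mu_n=-c_n$ is the chemical potential, and $V_n\geq0$.

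\textbf{Step 2: bounds on the chemical potentials.} I will show that $\mu_n$ is bounded.

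For the lower bound: $M_n\leq\int R_{T_n}((\Phi-\mu_n)_+)\,dx$, because $V_n\geq0$ and $R_T$ is decreasing. If $\mu_n\to-\infty$, the right side tends to $0$ by dominated convergence, uniformly for $T_n$ in a compact subset of $(0,\infty)$. This contradicts $M_n\to M>0$.

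For the upper bound: at a minimum point $x_*$ of $\Phi$, the inequality $U_n\geq0$ q.e. gives $\mu_n\leq\Phi_{\min}+\widetilde V_n$ q.e. near $x_*$. It therefore suffices to bound $V_n$ locally. The densities $\rho_n=R_{T_n}(U_n)$ satisfy $\rho_n\leq R_{T_n}(0)\leq C$ and $\int\rho_n=M_n\leq C$. The Newton estimate used in the high-temperature section then gives $\|V_n\|_\infty\leq C$, so $\mu_n$ is bounded above.

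\textbf{Step 3: compactness.} Pass to a subsequence with $\mu_n\to\mu$. Since $\|\nabla V_n\|_2^2=4\pi\iint\rho_n\rho_n/|x-y|$, the potentials $V_n$ are bounded in $\dot H^1$. Extract $V_n\rightharpoonup V$ weakly in $\dot H^1$, with convergence a.e. and in $L^1_{\rm loc}$.

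The limit densities $R_{T_n}(U_n)$ converge a.e. to $R_T(U)$, where $U=\Phi+V-\mu$, because $(T,s)\mapsto R_T(s)$ is jointly continuous. They are dominated by $R_{T'}((\Phi-\mu')_+)\in L^1$ for some $T'>\sup T_n$ and $\mu'>\sup\mu_n$. Dominated convergence then gives
\[
 -\Delta V=4\pi R_T(U),
 \qquad \int R_T(U)\,dx=M.
\]

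To see that $U\geq0$ q.e.: the functions $V_n-V$ converge weakly in $\dot H^1$, and a sequence of convex combinations (Mazur) converges strongly, hence q.e. along a subsequence. The set $\{\Phi+\widetilde W-\mu_n\geq0\}$ is preserved under convex combinations up to an error $\sup_{k\geq n}|\mu_k-\mu|\to0$. This is the same closedness fact used for $\mathcal K_\mu$ in Lemma~\ref{lem-grand-obstacle}.

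\textbf{Step 4: identifying the limit as the minimizer.} The limit state $f$, the Bose density built from $U$, is an admissible normal state of mass $M$ with $\frac{|p|^2}{2}+U=Th'(f)$. The expansion argument of Lemma~\ref{lem-expansion} requires only $U\geq0$ q.e. and this Euler identity. It gives, for every $F\in\mathcal A_M$,
\[
 \mathcal F_T(F)-\mathcal F_T(f)
 =Td(F^a,f)+\iint\Bigl(\tfrac{|p|^2}{2}+\widetilde U\Bigr)\,dF^s
 +\tfrac1{8\pi}\|\nabla(V_F-V)\|_2^2\geq0.
\]
The singular term is nonnegative because $\rho_{F^s}$ charges no capacity-zero set. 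Hence $f$ is the unique minimizer at $(T,M)$ and it is normal.

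Alternatively, one can observe that $V$ minimizes $\mathfrak J_{T,\mu}$ with $\lambda_{T,\mu}=0$ and then invoke Lemma~\ref{lem-grand-obstacle}.

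\textbf{Main obstacle.} I expect the hardest steps to be the uniform upper bound on $\mu_n$ and the preservation of the q.e. obstacle inequality in the limit. For the latter, the fallback is to use $\mathfrak J_{T_n,\mu_n}$ lower semicontinuity and $\Gamma$-convergence.
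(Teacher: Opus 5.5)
Your proposal is correct, but it takes a different route from the paper. The paper never works with the chemical potentials. It uses the coercivity bounds to extract a weak-$*$ limit $F$ of the phase-space measures $F_n$, with no loss of mass. Normality ($U_n\ge0$) then gives the uniform bound $F_n\le(\exp(|p|^2/(2T_+))-1)^{-1}$, which is locally integrable in phase space and so forces $F$ to be absolutely continuous. Finally, the paper shows that $F$ minimizes $\mathcal F_T$ on $\mathcal A_M$ by combining Lemma~\ref{lsc} with the recovery sequences $A_n=(M_n/M)A$, for which $\mathcal F_{T_n}(A_n)\to\mathcal F_T(A)$.

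You instead pass to the limit in the Euler--Lagrange data $(\mu_n,V_n)$. The two-sided bound on $\mu_n$, the uniform domination $R_{T_n}(U_n)\le R_{T'}((\Phi-\mu')_+)$, and dominated convergence in the Poisson equation show that the limit $(\mu,V)$ lies on the normal branch $\mathcal N_T$ with mass $M$. Proposition~\ref{prop-normal-minimizer}, whose verification identity you reproduce in Step 4, then identifies the corresponding Bose density as the unique minimizer.

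Your route avoids both the free-energy lower semicontinuity and the continuity of $\mathcal F_{T_n}(A_n)$ in $(T_n,M_n)$. It also yields more: the normal branch is closed, with convergence of the chemical potentials and of the potentials. The paper's route is softer and needs only the pointwise bound coming from $U_n\ge0$.

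One simplification: the Mazur argument in Step 3 is unnecessary. Since $R_T(U)\in L^1\cap L^\infty$, $V$ is the Newtonian potential of a bounded integrable density (as in Lemma~\ref{lem-normal-branch-regularity}), hence continuous. So the inequality $U\ge0$ a.e., inherited from $U_n\ge0$ and a.e. convergence, holds everywhere.
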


\begin{proof}
Choose a smooth compactly supported density $G$ of mass $M$ and put
$G_n=(M_n/M)G$. Minimality of $F_n$, together with the entropy estimate in
Lemma~\ref{lem-decay}, gives the uniform bound
\[
 \sup_n\left\{
 \iint_{\R^6}\left(\frac{|p|^2}{2}+\Phi_+(x)\right)dF_n
 +\|\nabla V_{F_n}\|_2^2+|H(F_n)|\right\}<\infty.
\]
This is exactly the coercivity argument used in the proof of Proposition~\ref{prop-existence-minimizer},
and it is uniform because $T_n$ stays in a compact subset of $(0,\infty)$ and $M_n$ stays bounded.
Thus, after extraction, $F_n\stackrel{*}{\rightharpoonup}F$ as finite measures, with no loss of mass
and with $F(\R^6)=M$.

Because $F_n$ is normal, Theorem~\ref{prop-EL-minimizer} gives
\[
 0\leq F_n(x,p) =\frac{1}{\exp((|p|^2/2+U_n)/T_n)-1} 
 \leq\frac1{\exp(|p|^2/(2T_+))-1}
\]
for some $T_+>\sup_nT_n$. The function on the right is integrable in $p$. On every bounded spatial
set it therefore gives a common integrable dominating density, and the weak-* limit $F$ is
absolutely continuous. In particular, $F$ has no singular part.

Lemma~\ref{lsc} and the uniform entropy bound give
\[
 \mathcal F_T(F)\leq \liminf_{n\to\infty}\mathcal F_{T}(F_n) =\liminf_{n\to\infty}\mathcal F_{T_n}(F_n).
\]
For an arbitrary $A\in\mathcal A_M$, set $A_n=(M_n/M)A$. Directly from the definition of the
functional, the Coulomb polarization formula, and continuity of the integral of $j$ under a scalar
factor tending to one (use $h(\alpha s)\leq C(h(s)+s)$ for $1/2\leq\alpha\leq2$),
\[
 \mathcal F_{T_n}(A_n)\longrightarrow\mathcal F_T(A).
\]
The minimality inequalities
$\mathcal F_{T_n}(F_n)\leq\mathcal F_{T_n}(A_n)$ therefore imply
$\mathcal F_T(F)\leq\mathcal F_T(A)$. Hence $F$ is the unique minimizer at $(T,M)$, and it is
normal.
\end{proof}

\begin{proof}[Proof of Theorem~\ref{thm-T}]
For fixed $M>0$, let
\[
 \mathfrak N_M:=\{T>0:\rho_T^s=0\}.
\]
The preliminary high-temperature construction shows that $\mathfrak N_M$ is nonempty, whereas the
preliminary low-temperature comparison shows that its complement contains an interval $(0,T_{\rm
low})$. Corollary~\ref{cor-normal-monotone} shows that $\mathfrak N_M$ is an upper interval, and
Lemma~\ref{lem-normal-closed} shows that it is closed in $(0,\infty)$. Consequently there is a
unique $T_\sharp(M)\in(0,\infty)$ such that
\[
 \mathfrak N_M=[T_\sharp(M),\infty).
\]
This is exactly the assertion of Theorem~\ref{thm-T}.
\end{proof}

\section{Sharp critical mass and harmonic-trap dichotomy}\label{sec-mass-transition}

We now fix the temperature and vary the total mass. The comparison principle from the preceding
section shows that the normal masses form a closed initial interval. We identify its endpoint with
the full normal-branch capacity and then determine when that endpoint is finite.

\subsection{A preliminary small-mass normal regime}
For fixed $T>0$ define
\[
M^*(T)=\sup_{c\in[-\Phi_{\min},\infty)}M_{T,c}.
\]
Since $M_{T,c}>0$ for every admissible $c$, we have $M^*(T)>0$. If $0<M<M^*(T)$, then there exists
$c_1\geq-\Phi_{\min}$ with $M<M_{T,c_1}$. By Lemma \ref{lem-continuity-MTc} and
$M_{T,c}\to0$ as $c\to\infty$, there exists $c_0\geq c_1$ such that $M=M_{T,c_0}$. Lemma
\ref{lem-expansion} then implies that the unique minimizer of mass $M$ is $F_{T,c_0}$ and hence has
no singular part. Thus the normal-mass set contains a nonempty interval.

\subsection{Full normal branch and an abstract large-mass criterion}
For $T>0$ set
\[
R_T(s)=\int_{\R^3}\frac{1}{\exp\left(\frac1T(\frac{|p|^2}{2}+s)\right)-1}\,dp,
\qquad s\geq0.
\]
Then $R_T$ is strictly decreasing and $R_T(s)$ decays exponentially as $s\to\infty$. Moreover,
\[
 R_T(0)=(2\pi T)^{3/2}\zeta(3/2),
\]
where $\zeta(s)=\sum_{n=1}^{\infty}n^{-s}$ is the Riemann zeta function. Indeed, using spherical
coordinates and the change of variables $r^2/(2T)=s$,
\[
\begin{aligned}
R_T(0)
&=4\pi\int_0^\infty\frac{r^2}{e^{r^2/(2T)}-1}\,dr
=2\pi(2T)^{3/2}\int_0^\infty\frac{s^{1/2}}{e^s-1}\,ds \\
&=2\pi(2T)^{3/2}\Gamma(3/2)\zeta(3/2)
=(2\pi T)^{3/2}\zeta(3/2).
\end{aligned}
\]

Define the full normal branch by
\[
\mathcal N_T=\left\{(\mu,V)~\middle|~
\begin{aligned}
&\mu\in\R,\quad V\in\dot H^1(\R^3),\quad
U:=\Phi+\widetilde V-\mu\geq0\ \text{q.e.},\\
&-\Delta V=4\pi R_T(U)\quad\text{in }\mathcal D'(\R^3)
\end{aligned}\right\},
\]
For $(\mu,V)\in\mathcal N_T$, set
\[
M_T(\mu,V)=\int_{\R^3}R_T(\Phi+V-\mu)\,dx
\]
and
\[
\widehat M^*(T)=\sup_{(\mu,V)\in\mathcal N_T}M_T(\mu,V).
\]
This is the normal-branch capacity; below we prove that it equals the sharp critical
mass. Notice that
it is not enough to restrict to the sub-branch $c\geq-\Phi_{\min}$, because a general normal state only
satisfies $\Phi+V+c\geq0$.

\begin{lemma}[Regularity of the normal branch]\label{lem-normal-branch-regularity}
If $(\mu,V)\in\mathcal N_T$, then
\[
 \rho:=R_T(\Phi+V-\mu)\in L^1(\R^3)\cap L^\infty(\R^3),
 \qquad
 V(x)=\int_{\R^3}\frac{\rho(y)}{|x-y|}\,dy.
\]
In particular, $V$ has a nonnegative continuous representative, $U=\Phi+V-\mu$ is continuous and
nonnegative everywhere, and the associated phase-space Bose density
\[
 F_N(x,p):=\frac1{\exp\left(\frac1T(\frac{|p|^2}{2}+U(x))\right)-1}
\]
belongs to $\mathcal A_{M_T(\mu,V)}$ and has finite entropy.
\end{lemma}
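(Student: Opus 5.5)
The first step is to show that $\rho:=R_T(U)$ is bounded and integrable. Boundedness is immediate: $U\ge0$ q.e., hence a.e., and $R_T$ is decreasing, so $0\le\rho\le R_T(0)$. Integrability needs a lower bound on $U$ at infinity, and $\mu$ may be larger than $-\Phi_{\min}$. I would first show that $V\ge0$. Since $-\Delta V=4\pi\rho\ge0$ in $\mathcal D'$ and $V\in\dot H^1\subset L^6$, the function $V$ is superharmonic and vanishes at infinity in the $L^6$ sense. Testing the equation with $V_-\in\dot H^1$ gives
\[
 -\int|\nabla V_-|^2\,dx=4\pi\int\rho\,V_-\,dx\ge0,
\]
so $V_-=0$; the right-hand side is handled by truncating $V_-$ or using a local approximation by $C_c^\infty$ functions. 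Consequently $U\ge(\Phi-\mu)_+$ a.e. Because $R_T(s)\le Ce^{-s/(2T)}$ for large $s$, and since $\int e^{-\sqrt{\Phi_+}/2}\,dx<\infty$ with $\Phi\to\infty$ forces $e^{-\Phi/(2T)}$ to be integrable at infinity, we get $\rho\in L^1$. This positivity step is the one I expect to need the most care, because $\rho V_-$ must be integrable in the pairing. I would use a cut-off $\min\{V_-,k\}\chi_R$ and pass to the limit by monotone convergence.

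The second step identifies $V$ with the Newtonian potential $N\rho:=|x|^{-1}*\rho$. Since $\rho\in L^1\cap L^\infty\subset L^{6/5}$, the Hardy--Littlewood--Sobolev inequality gives $N\rho\in\dot H^1$ with $-\Delta N\rho=4\pi\rho$. The difference $V-N\rho$ is therefore harmonic and lies in $L^6$, so it vanishes by Liouville's theorem; mean-value averaging over large balls shows an $L^6$ harmonic function is zero. The potential $N\rho$ of a bounded integrable density is continuous: split the kernel into $|x|^{-1}\mathbf 1_{B_1}\in L^1$, convolved with $L^\infty$, and $|x|^{-1}\mathbf 1_{B_1^c}\in C_0$, convolved with $L^1$. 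It is also nonnegative. Since continuous functions are quasi-continuous, $\widetilde V=N\rho$ q.e., so $U=\Phi+N\rho-\mu$ is continuous. The inequality $U\ge0$ q.e. then upgrades to everywhere, because a continuous function that is negative at a point is negative on a ball, and balls have positive capacity.

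Finally, I verify that $F_N\in\mathcal A_{M_T(\mu,V)}$. Its mass is $\int\rho=M_T(\mu,V)$, and its spatial marginal is $\rho$ with $V_{F_N}=N\rho=V\in\dot H^1$. The moment bound follows from $U\ge(\Phi-\mu)_+$: on the set where $\Phi\le\mu+1$, which is compact, one uses $\int\frac{|p|^2}{2}F_N\,dp\le C(T)$ and $\Phi_+$ bounded there. Outside it, the Bose factor is at most $Ce^{-(|p|^2/2+\Phi-\mu)/T}$, and $\Phi_+e^{-\Phi/T}$ is integrable at infinity by the confinement assumption, using $\Phi_+e^{-\Phi/(2T)}\le C$. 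Finiteness of the entropy follows from Lemma~\ref{lem-decay}. It bounds $h(F_N)$ by $(2+|p|+\sqrt{\Phi_+})F_N$, which is integrable by the moment bound, plus $2e^{-(|p|+\sqrt{\Phi_+})/2}$, which is integrable by the standing assumption. Hence $-\infty<H(F_N)\le0$.
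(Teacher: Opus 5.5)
Your proposal is correct and follows essentially the same route as the paper. The shared steps are a localized test of the Poisson equation by $V_-$ to get $V\geq0$, hence $U\geq(\Phi-\mu)_+$ and $\rho\in L^1\cap L^\infty$; identification of $V$ with $|x|^{-1}*\rho$ through the $L^6$ condition; continuity and the everywhere upgrade of $U\ge0$; and the moment and entropy bounds obtained from the comparison with $(\Phi-\mu)_+$.

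The one point to make explicit is that your cut-off produces the error term $\int \min\{V_-,k\}\,\nabla V_-\cdot\nabla\chi_R\,dx$, which monotone convergence alone does not control. It vanishes as $R\to\infty$ because $|\nabla\chi_R|\le C/R$ and H\"older's inequality bound it by $C\|\nabla V_-\|_{L^2(B_{2R}\setminus B_R)}\|V_-\|_{L^6(B_{2R}\setminus B_R)}$, which is exactly the annulus estimate the paper uses.
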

\begin{proof}
The weak maximum principle in $\dot H^1$ gives $V\geq0$ q.e. To see this without first assuming
that the right-hand side is integrable, let $\chi_R$ be a cutoff that equals one on $B_R$, vanishes
outside $B_{2R}$, and satisfies $|\nabla\chi_R|\leq C/R$. Testing by
$\chi_R^2V_-$, where $V_-=\max\{-V,0\}$, gives
\[
 \int_{\R^3}\chi_R^2|\nabla V_-|^2
 \leq C\int_{B_{2R}\setminus B_R}V_-^2|\nabla\chi_R|^2.
\]
By H\"older's inequality, the right-hand side is bounded by
\[
 C\left(\int_{B_{2R}\setminus B_R}|V_-|^6\,dx\right)^{1/3},
\]
which tends to zero because $V_-\in L^6$. Thus $\nabla V_-=0$, and the $L^6$ condition implies
$V_-=0$. Combining $V\geq0$ with $U\geq0$ gives
\[
 U\geq(\Phi-\mu)_+\qquad\text{q.e.}
\]
Since $R_T$ is decreasing,
\[
 0\leq\rho\leq R_T((\Phi-\mu)_+)\leq R_T(0).
\]
The right-hand side is integrable: it is bounded on the compact sublevel sets of $\Phi$, and for
large $\Phi$ it is bounded by $C_{T,\mu}e^{-\Phi/(2T)}$, which is integrable under the standing
confinement assumption. Thus $\rho\in L^1\cap L^\infty$, and in particular
$\rho\in L^{6/5}$.

The Newtonian potential $W=|x|^{-1}*\rho$ belongs to $\dot H^1$ by the
Hardy--Littlewood--Sobolev inequality and satisfies $-\Delta W=4\pi\rho$. Hence $V-W$ is harmonic
in distributions and belongs to $\dot H^1$; its gradient vanishes, and its $L^6$ representative
forces the additive constant to be zero. Thus $V=W$. The Newtonian potential of an
$L^1\cap L^\infty$ density is continuous and nonnegative. Consequently $U$ is continuous, and its
quasi-everywhere nonnegativity improves to pointwise nonnegativity. Finally,
$M_T(\mu,V)=\|\rho\|_1<\infty$.

The same comparison with $(\Phi-\mu)_+$, now after integrating the Bose factor against
$1+|p|^2/2+\Phi_+(x)$, gives
\[
 \iint_{\R^6}\left(1+\frac{|p|^2}{2}+\Phi_+(x)\right)F_N(x,p)\,dpdx<\infty.
\]
Indeed, the momentum integrals are bounded on spatial sublevel sets and decay exponentially in
$\Phi$ outside them. The analogous estimate for $h(F_N)$ gives finite entropy. Together with
$V\in\dot H^1$, these estimates prove $F_N\,dxdp\in\mathcal A_{M_T(\mu,V)}$.
\end{proof}

\begin{prop}\label{prop-normal-minimizer}
Let $T>0$, $c\in\R$, and $V\in\dot H^1(\R^3)$. Set
\[
U:=\Phi+V+c
\]
and assume that $U\geq0$ q.e. and
\[
-\Delta V=4\pi R_T(U)
\]
in the sense of distributions. Define
\[
F_N(x,p)=\frac{1}{\exp\left(\frac1T\left(\frac{|p|^2}{2}+U(x)\right)\right)-1}.
\]
Then $F_N$ has finite mass $M=M_T(-c,V)$ and is the unique minimizer of
\eqref{mp-boson} in $\mathcal A_M$.
\end{prop}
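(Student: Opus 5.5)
The plan is to reproduce the expansion of Lemma~\ref{lem-expansion}, now without the sign restriction $c\geq-\Phi_{\min}$ and with the positivity of the singular-part coefficient coming from $U\geq0$ rather than from $V>0$. Set $\mu:=-c$, so that $(\mu,V)\in\mathcal N_T$. Lemma~\ref{lem-normal-branch-regularity} then gives three facts: $V$ is the Newtonian potential of $\rho=R_T(U)\in L^1\cap L^\infty$; $V$ and $U$ are continuous with $U\geq0$ everywhere; and $F_N\,dxdp\in\mathcal A_M$ with $M=M_T(-c,V)$ and finite entropy. In particular $V=V_{F_N}$, since the spatial marginal of $F_N$ is exactly $R_T(U)$. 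The continuous representative is quasi-continuous, so it agrees with $\widetilde V_{F_N}$ q.e.

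Next, fix any $F\in\mathcal A_M$ and write $F=F^a\,dxdp+F^s$. The Bose formula gives the pointwise identity
\[
 \frac{|p|^2}{2}+\Phi+V+c=T h'(F_N),
\]
valid wherever $F_N$ is finite, i.e.\ off the null set $\{p=0,\,U=0\}$. I will then expand $\mathcal F_T(F)-\mathcal F_T(F_N)$ term by term as follows:
\begin{enumerate}[\rm(a)]
\item Linear terms: insert $\iint(\Phi+V+c)\,d(F-F_N)$ and subtract it again. The mass equality $F(\R^6)=F_N(\R^6)$ kills the constant $c$.
\item Coulomb term: use the polarization identity
\[
\frac1{8\pi}\bigl(\|\nabla V_F\|_2^2-\|\nabla V\|_2^2\bigr)=\int V\,d(\rho_F-\rho_{F_N})+\frac1{8\pi}\|\nabla(V_F-V)\|_2^2.
\]
Its pairing is justified by Proposition~\ref{prop-quasicont} applied to $F$ with the quasi-continuous $V\in\dot H^1$. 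Since $V$ is continuous, this coincides with integrating $V$ against $\rho_F$.
\item Entropy term: on the absolutely continuous part, the identity above turns the linear terms into $T h'(F_N)(F^a-F_N)$, which combines with $T(H(F)-H(F_N))$ into $T\,d(F^a,F_N)$ from Definition~\ref{D.d.def}.
\end{enumerate}
Collecting everything yields
\[
\mathcal F_T(F)-\mathcal F_T(F_N)=\iint\Bigl(\frac{|p|^2}{2}+U\Bigr)dF^s+T\,d(F^a,F_N)+\frac1{8\pi}\|\nabla(V_F-V)\|_2^2 .
\]
All three terms are nonnegative because $U\geq0$ pointwise. This proves minimality.

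Uniqueness then follows as in Lemma~\ref{lem-expansion}. If $\mathcal F_T(F)=\mathcal F_T(F_N)$, strict convexity forces $d(F^a,F_N)=0$, hence $F^a=F_N$ a.e. Equal masses then give $F^s(\R^6)=0$, so $F=F_N\,dxdp$. Alternatively, one can appeal to the uniqueness in Proposition~\ref{prop-existence-minimizer} once minimality is known.

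The main obstacle I expect is justifying the expansion when individual terms might be infinite or only conditionally meaningful. Specifically, $\iint T h'(F_N)F^a$ could be large near the singular set $\{p=0,\,U=0\}$, and $\Phi$ and $V$ are paired against general measures. My first approach is to rely on the finiteness of $\iint(|p|^2/2+\Phi_+)\,dF$ and of $\|\nabla V_F\|_2$, together with $0\leq V\leq\|V\|_\infty$ and $\Phi_-$ bounded. These make every linear term finite, and then $T h'(F_N)=|p|^2/2+U$ is integrable against $F^a$. With that in hand, $d(F^a,F_N)$ is a well-defined nonnegative integral equal to the finite difference, so no cancellation of infinities is needed.
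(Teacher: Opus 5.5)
Your proposal is correct and follows essentially the same route as the paper: you invoke Lemma~\ref{lem-normal-branch-regularity} with $\mu=-c$ to get admissibility and $V=V_{F_N}$, reuse the expansion of Lemma~\ref{lem-expansion}, and read off minimality and uniqueness from its three nonnegative terms. The differences are minor. For the singular term you use the pointwise nonnegativity of the continuous representative of $U$, whereas the paper uses $U\geq0$ q.e.\ together with the fact that finite-energy measures do not charge capacity-zero sets. You also spell out the integrability needed to justify the expansion, which the paper leaves implicit.
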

\begin{proof}
Lemma~\ref{lem-normal-branch-regularity}, applied with $\mu=-c$, shows that
$F_N\,dxdp\in\mathcal A_M$ and that $V=V_{F_N}$. Let $F\in\mathcal A_M$ have the same mass $M$.
Since
\[
\frac{|p|^2}{2}+U=T h'(F_N),
\]
the same expansion as in Lemma \ref{lem-expansion}, with the chemical
potential $c$ cancelling by the mass constraint, gives
\[
\begin{aligned}
\mathcal F_T(F)-\mathcal F_T(F_N)
&=\iint_{\R^6}\left(\frac{|p|^2}{2}+U(x)\right)dF^s
+T d(F^a,F_N) \\
&\quad +\frac{1}{8\pi}\|\nabla(V_F-V)\|_2^2.
\end{aligned}
\]
Every term on the right-hand side is nonnegative: the first one by $U\geq0$ q.e. and the fact that
finite-energy measures do not charge capacity-zero sets, the second by strict convexity of $-h$, and
the third by positivity of the Coulomb energy. Therefore $F_N$ is a minimizer. If equality holds, then
$d(F^a,F_N)=0$, $\nabla V_F=\nabla V$, and the first term vanishes. Thus $F^a=F_N$ a.e. and the mass
constraint forces $F^s=0$. Hence $F=F_N$, proving uniqueness.
\end{proof}

\begin{prop}\label{prop-cond-threshold}
If $M>\widehat M^*(T)$, then the minimizer of \eqref{mp-boson} has a nontrivial singular part.
\end{prop}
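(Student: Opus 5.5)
This is a contrapositive argument: if the minimizer has no singular part, its mass is realized on the normal branch $\mathcal N_T$, so it cannot exceed $\widehat M^*(T)$. Let $F_0$ be the unique minimizer of \eqref{mp-boson} with mass $M$ given by Proposition~\ref{prop-existence-minimizer}, and suppose $F_0^s=0$. We show that this forces $M\leq\widehat M^*(T)$, contradicting $M>\widehat M^*(T)$.

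First, apply Theorem~\ref{prop-EL-minimizer}(i). It yields a constant $c\in\R$ such that
\[
U:=\Phi+\widetilde V_{F_0}+c\geq0\quad\text{q.e.}
\]
It also gives the Bose--Einstein formula for $F_0^a$ almost everywhere. Since $F_0=F_0^a\,dxdp$, the spatial density is $\rho_{F_0}=\int F_0^a\,dp=R_T(U)$ a.e. The distributional Poisson equation $-\Delta V_{F_0}=4\pi\rho_{F_0}$ then reads
\[
-\Delta V_{F_0}=4\pi R_T(\Phi+\widetilde V_{F_0}+c)\qquad\text{in }\mathcal D'(\R^3).
\]
Because $F_0\in\mathcal A$, we also have $V_{F_0}\in\dot H^1(\R^3)$. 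Setting $\mu:=-c$ and $V:=V_{F_0}$, these are exactly the defining conditions of $\mathcal N_T$, so $(\mu,V)\in\mathcal N_T$.

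Second, compute the mass. By definition of $M_T$,
\[
M_T(\mu,V)=\int_{\R^3}R_T(U)\,dx=\int_{\R^3}\rho_{F_0}\,dx=F_0(\R^6)=M.
\]
Hence $M\leq\sup_{\mathcal N_T}M_T=\widehat M^*(T)$, which is the desired contradiction. Therefore $F_0^s\neq0$, and by Theorem~\ref{prop-EL-minimizer}(ii) equivalently $\rho_0^s\neq0$.

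The only point needing care is the identification $\rho_{F_0}=R_T(U)$ almost everywhere. It follows from Fubini's theorem applied to the explicit a.e. formula for $F_0^a$. The quasi-continuous representative causes no trouble here, because it agrees with $V_{F_0}$ Lebesgue-a.e., and only a.e. statements enter the distributional equation and the mass integral. Lemma~\ref{lem-normal-branch-regularity} is not needed for this direction. One could alternatively go through Proposition~\ref{prop-normal-minimizer}, but the direct membership argument above is shorter.
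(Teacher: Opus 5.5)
Your proposal is correct and follows essentially the same route as the paper. You assume $F_0^s=0$, use Theorem~\ref{prop-EL-minimizer}(i) to place $(-c,V_{F_0})$ in $\mathcal N_T$, and conclude $M=M_T(-c,V_{F_0})\leq\widehat M^*(T)$; your remarks on Fubini and on the quasi-continuous representative only make explicit what the paper leaves implicit.
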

\begin{proof}
Suppose that the minimizer $F_0$ has no singular part. By Theorem \ref{prop-EL-minimizer},
there is a constant $c\in\R$ such that, with $\mu=-c$,
\[
F_0(x,p)=\frac{1}{\exp\left(\frac1T(\frac{|p|^2}{2}+\Phi(x)+V_{F_0}(x)-\mu)\right)-1}
\]
and $U=\Phi+V_{F_0}-\mu\geq0$. Thus $(\mu,V_{F_0})\in\mathcal N_T$, and
\[
M=\int_{\R^3}R_T(U)\,dx\leq\widehat M^*(T),
\]
contradicting the assumption.
\end{proof}

\begin{prop}\label{prop-obstacle-finite}
Assume that $\Phi$ attains its minimum at $x_0$. Define
\[
K_T(\mu)=\int_{\R^3}\frac{R_T((\Phi(y)-\mu)_+)}{|x_0-y|}\,dy.
\]
If
\[
K_T(\mu)=o(\mu)\qquad\text{as }\mu\to\infty,
\]
then $\widehat M^*(T)<\infty$.
\end{prop}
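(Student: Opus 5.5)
The plan is to bound the chemical potential $\mu$ uniformly along the full normal branch $\mathcal N_T$ by evaluating the self-consistent potential at the minimum point $x_0$. A bound on the mass then follows from the pointwise domination of the density. Fix $(\mu,V)\in\mathcal N_T$ and set $\rho:=R_T(\Phi+V-\mu)$. Lemma~\ref{lem-normal-branch-regularity} gives three facts we will use: $V\geq0$, $V=|x|^{-1}*\rho$ with a continuous representative, and $U=\Phi+V-\mu\geq0$ \emph{everywhere}, not just quasi-everywhere. The pointwise statement is what makes evaluation at the single point $x_0$ legitimate; this is the one place where care is needed. The same lemma also gives the domination $0\leq\rho\leq R_T((\Phi-\mu)_+)$, because $R_T$ is decreasing and $U\geq(\Phi-\mu)_+$.

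First we obtain a lower bound for $V(x_0)$. Since $U(x_0)\geq0$ and $\Phi(x_0)=\Phi_{\min}$, we get $V(x_0)\geq\mu-\Phi_{\min}$. Second we obtain an upper bound: by the Newtonian representation and the domination above,
\[
 V(x_0)=\int_{\R^3}\frac{\rho(y)}{|x_0-y|}\,dy\leq\int_{\R^3}\frac{R_T((\Phi(y)-\mu)_+)}{|x_0-y|}\,dy=K_T(\mu).
\]
We should also note that $K_T(\mu)$ is finite for every $\mu$. Near $x_0$ the integrand is at most $R_T(0)/|x_0-y|$, which is locally integrable. Away from $x_0$ it is bounded by $R_T((\Phi-\mu)_+)$, which is integrable by the exponential decay of $R_T$ and the confinement assumption.

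Combining the two bounds gives $\mu-\Phi_{\min}\leq K_T(\mu)$ for every $\mu$ that occurs in $\mathcal N_T$. The hypothesis $K_T(\mu)=o(\mu)$ supplies $\mu_0$ such that $K_T(\mu)<\mu-\Phi_{\min}$ for all $\mu>\mu_0$; for instance, choose $K_T(\mu)\leq\mu/2$ and $\mu/2>\Phi_{\min}$. Hence every $(\mu,V)\in\mathcal N_T$ satisfies $\mu\leq\mu_0$.

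Finally, $R_T$ is decreasing and $(\Phi-\mu)_+\geq(\Phi-\mu_0)_+$ whenever $\mu\leq\mu_0$. Therefore
\[
 M_T(\mu,V)=\int_{\R^3}\rho\,dx\leq\int_{\R^3}R_T((\Phi-\mu_0)_+)\,dx<\infty,
\]
where finiteness again follows from boundedness of $R_T$ on the compact set $\{\Phi\leq\mu_0+1\}$ and from the bound $R_T(s)\leq Ce^{-s/(2T)}$ for large $s$, integrated against the confinement assumption. This bound is uniform over $\mathcal N_T$, so $\widehat M^*(T)<\infty$.
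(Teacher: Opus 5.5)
Your proposal is correct and follows the paper's proof essentially step for step. The paper also uses the obstacle condition at $x_0$, the Newtonian representation and the domination $\rho\le R_T((\Phi-\mu)_+)$ to get $\mu-\Phi_{\min}\le K_T(\mu)$, then bounds $\mu$ uniformly via the $o(\mu)$ hypothesis and controls the mass by $\int R_T((\Phi-\mu_0)_+)\,dx$; your appeal to the continuous representative from Lemma~\ref{lem-normal-branch-regularity} to justify evaluating at the single point $x_0$ is exactly what the paper relies on implicitly.
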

\begin{proof}
Let $(\mu,V)\in\mathcal N_T$ and put $U=\Phi+V-\mu$. Since $U\geq0$ and $V\geq0$,
\[
U(x)\geq(\Phi(x)-\mu)_+.
\]
Because $R_T$ is decreasing,
\[
\rho(x):=R_T(U(x))\leq R_T((\Phi(x)-\mu)_+).
\]
At the minimum point $x_0$ of $\Phi$, the obstacle condition gives
\[
\mu-\Phi_{\min}\leq V(x_0).
\]
Using the Newton representation of $V$ and the previous density bound, we obtain
\[
\mu-\Phi_{\min}
\leq\int_{\R^3}\frac{\rho(y)}{|x_0-y|}\,dy
\leq K_T(\mu).
\]
If $K_T(\mu)=o(\mu)$, this inequality cannot hold for arbitrarily large $\mu$. Thus all normal branch
parameters satisfy $\mu\leq\mu_*(T)$ for some finite $\mu_*(T)$.

Consequently,
\[
M_T(\mu,V)=\int_{\R^3}\rho(x)\,dx
\leq\int_{\R^3}R_T((\Phi(x)-\mu_*(T))_+)\,dx<\infty,
\]
where the last integral is finite by the confining assumption on $\Phi$ and the exponential decay of
$R_T$. Taking the supremum over $(\mu,V)\in\mathcal N_T$ gives the result.
\end{proof}

\subsection{Superquadratic confinement}
\begin{corollary}\label{cor-superquadratic}
If
\[
\Phi(x)\geq a|x|^q-b\qquad\text{for large }|x|
\]
with $q>2$, then $\widehat M^*(T)<\infty$ for every fixed $T>0$.
\end{corollary}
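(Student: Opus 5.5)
We will deduce the corollary from Proposition~\ref{prop-obstacle-finite}: let $x_0$ be a minimum point of $\Phi$; it suffices to verify $K_T(\mu)=o(\mu)$ as $\mu\to\infty$. The bound $K_T(\mu)\le C\mu^{1/2}$ is what we aim for. Throughout we use $R_T(s)\le R_T(0)$ for all $s\ge0$ and the exponential bound $R_T(s)\le C_Te^{-s/(2T)}$ for large $s$. The latter follows from the geometric-series representation $R_T(s)=(2\pi T)^{3/2}\sum_k k^{-3/2}e^{-ks/T}$, which even gives $R_T(s)\le C_T e^{-s/T}$ for $s\ge1$.

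\textbf{Step 1: split the integral.} By the growth hypothesis there are $r_0,a',b'>0$ such that $\Phi(y)\ge a'|y-x_0|^q-b'$ for $|y-x_0|\ge r_0$. For large $\mu$, set
\[
\ell(\mu):=\left(\frac{2(\mu+b')}{a'}\right)^{1/q}.
\]
Then on $\{|y-x_0|\ge\ell(\mu)\}$ one has $\Phi(y)-\mu\ge \tfrac{a'}{2}|y-x_0|^q\ge0$ (after enlarging $\ell(\mu)$ slightly if needed). Split $K_T(\mu)=I_1+I_2$ according to $|y-x_0|<\ell(\mu)$ and $|y-x_0|\ge\ell(\mu)$.

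\textbf{Step 2: the inner ball.} On the inner ball, bound $R_T$ by $R_T(0)$ to get
\[
I_1\le R_T(0)\int_{B_{\ell(\mu)}(x_0)}\frac{dy}{|x_0-y|}=2\pi R_T(0)\,\ell(\mu)^2\le C\mu^{2/q}.
\]

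\textbf{Step 3: the outer region.} On the outer region, the exponential bound gives
\[
I_2\le C\int_{|z|\ge\ell(\mu)}\frac{e^{-a'|z|^q/(2T)}}{|z|}\,dz,
\]
which is bounded uniformly in $\mu$, and in fact tends to $0$.

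\textbf{Conclusion.} Hence $K_T(\mu)\le C(1+\mu^{2/q})$. Since $q>2$, this is $o(\mu)$, and Proposition~\ref{prop-obstacle-finite} yields $\widehat M^*(T)<\infty$.

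The only delicate point is the bookkeeping in the inner region: the crude bound $R_T\le R_T(0)$ on a ball of radius $\sim\mu^{1/q}$, weighted by the Coulomb kernel, must grow slower than $\mu$. This is exactly where $q>2$ enters; for $q=2$ the same estimate produces a term linear in $\mu$. That matches the harmonic threshold $3\omega^2\gtrless4\pi R_T(0)$, where the constant, rather than the exponent, decides the outcome. We also need $\ell(\mu)\ge r_0$, which holds for large $\mu$, and continuity of $\Phi$ to handle the compact region near $x_0$. Neither causes difficulty.
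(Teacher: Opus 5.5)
Your proof is correct and is essentially the paper's argument. Both reduce to Proposition~\ref{prop-obstacle-finite}, bound $R_T$ by $R_T(0)$ on a region of radius of order $\mu^{1/q}$ to get a Coulomb-weighted contribution $O(\mu^{2/q})$, and use the exponential decay of $R_T$ elsewhere; the only difference is that the paper slices space into the level sets $\{\mu+k<\Phi\le\mu+k+1\}$, whereas you use a single ball-and-complement split around $x_0$.

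One small slip: your announced target $K_T(\mu)\le C\mu^{1/2}$ does not follow from your argument when $2<q<4$ (then $2/q>1/2$). The bound you actually prove, $K_T(\mu)\le C(1+\mu^{2/q})$, is what is needed and suffices.
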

\begin{proof}
For large $\mu$, decompose space into
\[
 A_0:=\{\Phi\leq\mu+1\},\qquad
 A_k:=\{\mu+k<\Phi\leq\mu+k+1\},\quad k\geq1.
\]
The growth assumption implies
\[
 A_k\subset B_{C(1+\mu+k)^{1/q}}(0).
\]
Since the integral of $|x_0-y|^{-1}$ over a ball of radius $R$ is bounded by
$C(1+R^2)$, we have
\[
 \int_{A_k}\frac{dy}{|x_0-y|}
 \leq C(1+\mu+k)^{2/q}.
\]
On $A_0$ use $R_T\leq R_T(0)$; on $A_k$, $k\geq1$, use
$R_T((\Phi-\mu)_+)\leq C_Te^{-k/(2T)}$. It follows that
\[
\begin{aligned}
 K_T(\mu)
 &\leq C_T(1+\mu)^{2/q}
 +C_T\sum_{k=1}^\infty e^{-k/(2T)}(1+\mu+k)^{2/q}\\
 &\leq C_T(1+\mu)^{2/q}.
\end{aligned}
\]
Since $2/q<1$, $K_T(\mu)=o(\mu)$ as $\mu\to\infty$. Proposition \ref{prop-obstacle-finite} gives
$\widehat M^*(T)<\infty$.
\end{proof}

\subsection{Harmonic trap dichotomy}
\begin{prop}\label{prop-harmonic-strong}
Assume
\[
\Phi(x)=\Phi(0)+\frac{\omega^2}{2}|x|^2
\]
and restrict attention to radial normal states. If
\[
3\omega^2>4\pi R_T(0),
\]
then the radial normal branch has uniformly bounded mass.
\end{prop}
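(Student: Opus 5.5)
The plan is to get a pointwise lower bound on the effective potential $U=\Phi+V-\mu$ that grows quadratically in $|x|$ and is \emph{independent of $\mu$}. Since $\rho=R_T(U)\le R_T(0)$, the self-consistent repulsion can flatten the harmonic well only by a bounded amount. Integrating the Bose factor against this lower bound then gives a mass bound uniform over the radial normal branch.

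\emph{Regularity.} Let $(\mu,V)\in\mathcal N_T$ be radial, and set $U=\Phi+V-\mu$ and $\rho=R_T(U)$. By Lemma~\ref{lem-normal-branch-regularity}, $\rho\in L^1\cap L^\infty$ with $0\le\rho\le R_T(0)$, and $V=|x|^{-1}*\rho$ is continuous and nonnegative. Moreover $U\ge0$ holds pointwise everywhere, in particular at the origin. Since $\rho$ is radial and bounded, Newton's theorem gives
\[
 V(r)=\frac{m(r)}{r}+\int_{|y|>r}\frac{\rho(y)}{|y|}\,dy,
 \qquad m(r):=\int_{B_r}\rho\,dy .
\]
Hence $V$ is $C^1$ in $r>0$ with $V'(r)=-m(r)/r^2$; for bounded $\rho$ this is a routine differentiation of the formula above.

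\emph{Gradient estimate.} The density bound gives $m(r)\le\frac{4\pi}{3}R_T(0)r^3$. Therefore, for $r>0$,
\[
 U'(r)=\omega^2 r-\frac{m(r)}{r^2}\ \ge\ \Bigl(\omega^2-\frac{4\pi}{3}R_T(0)\Bigr)r=:\kappa r .
\]
The hypothesis $3\omega^2>4\pi R_T(0)$ is exactly $\kappa>0$. Integrating from $0$ and using $U(0)\ge0$ yields
\[
 U(x)\ge \frac{\kappa}{2}|x|^2\qquad\text{for all }x\in\R^3 .
\]
This bound holds for every radial normal state and every $\mu$.

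\emph{Mass bound.} Since $R_T$ is decreasing, $\rho(x)\le R_T(\kappa|x|^2/2)$. Consequently
\[
 M_T(\mu,V)=\int_{\R^3}\rho\,dx\le\int_{\R^3}R_T\bigl(\tfrac{\kappa}{2}|x|^2\bigr)\,dx<\infty .
\]
The last integral converges because the integrand is bounded by $R_T(0)$ near the origin and decays like $e^{-\kappa|x|^2/(2T)}$ at infinity. The right-hand side depends only on $T$ and $\omega$, which proves the uniform bound.

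The only delicate point is justifying $U'(r)=\omega^2r-m(r)/r^2$ and the pointwise condition $U(0)\ge0$. Both rest on the continuity of $V$ from Lemma~\ref{lem-normal-branch-regularity} and on the explicit radial Newton formula. An absolutely continuous version of $U$ along rays suffices for the integration step, so this should not be a real obstruction.
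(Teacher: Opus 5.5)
Your proof is correct and is essentially the paper's argument. The paper writes $(r^2U')'=r^2\bigl(3\omega^2-4\pi R_T(U)\bigr)\ge\delta_T r^2$ with $\delta_T=3\omega^2-4\pi R_T(0)$ and integrates twice. Your bound $m(r)\le\frac{4\pi}{3}R_T(0)r^3$ is exactly the first of those integrations, so with $\kappa=\delta_T/3$ you obtain the same estimate $U\ge\frac{\delta_T}{6}|x|^2$ and the same uniform mass bound.
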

\begin{proof}
Let $(\mu,V)\in\mathcal N_T$ be radial and set
\[
U(r)=\Phi(r)+V(r)-\mu,
\qquad \rho(r)=R_T(U(r)).
\]
Let
\[
m(r)=4\pi\int_0^r\rho(s)s^2\,ds.
\]
Then $V'(r)=-m(r)/r^2$, and hence
\[
U'(r)=\omega^2r-\frac{m(r)}{r^2}.
\]
Therefore
\[
(r^2U'(r))'=r^2\left(3\omega^2-4\pi R_T(U(r))\right).
\]
Since $U\geq0$ and $R_T$ is decreasing,
\[
R_T(U(r))\leq R_T(0).
\]
Let $\delta_T=3\omega^2-4\pi R_T(0)>0$. Then
\[
(r^2U'(r))'\geq\delta_T r^2.
\]
Using regularity at the origin and integrating twice gives
\[
U(r)\geq U(0)+\frac{\delta_T}{6}r^2\geq\frac{\delta_T}{6}r^2.
\]
Thus
\[
\rho(r)\leq R_T\left(\frac{\delta_T}{6}r^2\right),
\]
and consequently
\[
M_T(\mu,V)
=4\pi\int_0^\infty\rho(r)r^2\,dr
\leq4\pi\int_0^\infty R_T\left(\frac{\delta_T}{6}r^2\right)r^2\,dr<\infty.
\]
The upper bound is independent of $(\mu,V)$.
\end{proof}

\begin{prop}\label{prop-harmonic-weak}
Assume
\[
\Phi(x)=\Phi(0)+\frac{\omega^2}{2}|x|^2.
\]
If
\[
3\omega^2\leq4\pi R_T(0),
\]
then, for every $M>0$, there exists a radial purely absolutely continuous normal state of mass $M$.
Consequently, the minimizer of \eqref{mp-boson} with mass $M$ has no singular part.
\end{prop}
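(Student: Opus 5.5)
The plan is to work entirely in the grand-canonical framework of Lemma~\ref{lem-grand-obstacle}. I would show that under $3\omega^2\le 4\pi R_T(0)$ the obstacle reaction $\lambda_{T,\mu}$ vanishes for \emph{every} chemical potential $\mu\in\R$, and that the resulting normal masses $m_T(\mu)$ sweep out all of $(0,\infty)$. Radiality of the states then comes from uniqueness, as in Proposition~\ref{prop-existence-minimizer}. That the normal states are the canonical minimizers follows from Lemma~\ref{lem-grand-obstacle} (or Proposition~\ref{prop-normal-minimizer}).

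\emph{Step 1: the reaction vanishes.} Fix $\mu$, and write $V=V_{T,\mu}$, $U=\Phi+V-\mu$, $\lambda=\lambda_{T,\mu}$.

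\begin{itemize}
\item By \eqref{eq-grand-VI}, $V$ solves locally the obstacle problem with obstacle $\psi=\mu-\Phi\in C^\infty$ and bounded source $G=4\pi R_T(U)\le 4\pi R_T(0)$.
\item I would repeat verbatim the regularity argument from the proof of Theorem~\ref{prop-EL-minimizer}(iii). Proposition~\ref{P.obstacle} gives the Lewy--Stampacchia bound, so $\lambda$ has a locally bounded density and $V\in W^{2,p}_{\rm loc}$. The H\"older estimate \eqref{RT-holder} then gives $G\in C^{0,1/2}_{\rm loc}$, hence $U\in C^{1,1}_{\rm loc}$.
\item Since $D^2U=0$ a.e.\ on $\mathcal C=\{U=0\}$ and $\lambda$ is carried by $\mathcal C$, the identity $\Delta U=\Delta\Phi-4\pi R_T(U)-4\pi\lambda$ yields
\[
 d\lambda=\frac1{4\pi}\bigl(\Delta\Phi-4\pi R_T(0)\bigr)\mathbf 1_{\mathcal C}\,dx=\frac1{4\pi}\bigl(3\omega^2-4\pi R_T(0)\bigr)\mathbf 1_{\mathcal C}\,dx\le0 .
\]
\item Since $\lambda\ge0$, this forces $\lambda=0$.
\end{itemize}

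So every $\mathfrak F_{T,\mu}$ is normal, and Lemma~\ref{lem-chemical-comparison} applies with any $\mu$. It gives that $m_T$ is continuous and nondecreasing on all of $\R$, with $m_T(\nu)\to0$ as $\nu\to-\infty$.

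\emph{Step 2: unbounded mass.} Suppose $m_T(\mu)\le M_0$ for all $\mu$. For a normal state the density is $\rho=R_T(U)\le R_T(0)$, and by Lemma~\ref{lem-normal-branch-regularity} the potential $V$ is its Newtonian potential. The Newton potential estimate used in the high-temperature argument then gives
\[
 V(0)\le C M_0^{2/3}R_T(0)^{1/3}.
\]
On the other hand, $U(0)\ge0$ gives $V(0)\ge\mu-\Phi(0)$, which tends to $\infty$ as $\mu\to\infty$. This is a contradiction, so $m_T(\mu)\to\infty$.

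By the intermediate value theorem, each $M>0$ equals $m_T(\mu)$ for some $\mu$. The corresponding $\mathfrak F_{T,\mu}$ is a purely absolutely continuous state of mass $M$, and it is the unique canonical minimizer at that mass. Since $\Phi$ is radial, rotating the state by any $R\in SO(3)$ produces another minimizer, so uniqueness makes it radial.

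\emph{Main obstacle.} The delicate step is Step 1. It transfers the $C^{1,1}$ obstacle regularity and the ``Hessian vanishes on the contact set'' argument from the canonical minimizer to the grand-canonical minimizer $V_{T,\mu}$. This requires checking that \eqref{eq-grand-VI} is a local variational inequality of exactly the form in Proposition~\ref{P.obstacle}, i.e.\ that competitors in $V+H^1_0(B)$ above the obstacle have finite pressure term. I would verify this using the local boundedness of $R_T$ and $P_T$ on $[0,\infty)$.

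The borderline case $3\omega^2=4\pi R_T(0)$ needs no separate treatment: the density formula gives $\lambda=0$ directly, since the bracket is then zero.
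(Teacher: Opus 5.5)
Your proposal is correct, but it takes a genuinely different route from the paper's proof.

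\textbf{The paper's route.} The paper never passes through the grand-canonical obstacle problem here. It shoots the radial ODE $U''+\tfrac2rU'=3\omega^2-4\pi R_T(U)$ from a central value $U(0)=a>\alpha$. Here $\alpha$ is the zero of $3\omega^2-4\pi R_T$, or $\alpha=0$ in the critical case. It shows the solutions are global, with $U_a\ge a+f(a)r^2/6$, and identifies $U_a-\Phi$ with a Newton potential plus a constant, so that the state lies in $\mathcal N_T$. It then hits every mass by continuity of $a\mapsto M(a)$. Here $M(a)\to0$ as $a\to\infty$, and $M(a)\to\infty$ as $a\downarrow\alpha$ because $U_a\to\alpha$ locally uniformly. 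The conclusion comes from Proposition~\ref{prop-normal-minimizer}. That argument is elementary and explicit, but it uses radial symmetry and the constancy of $\Delta\Phi$ essentially.

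\textbf{Your route.} You instead import the Lewy--Stampacchia regularity of Theorem~\ref{prop-EL-minimizer}(iii) to kill the reaction $\lambda_{T,\mu}$ at every chemical potential. You then get unbounded normal mass from the $L^1$--$L^\infty$ Newton bound at the trap minimum, and conclude with the intermediate value theorem via Lemma~\ref{lem-chemical-comparison}. Radial symmetry enters only at the end, through uniqueness. Your argument therefore proves the stronger statement that no condensation occurs, at any mass, for any $C^2$ trap with $\Delta\Phi\le4\pi R_T(0)$ on $\R^3$. This includes, for instance, subquadratic traps. Your verification that the local variational inequality has the form required by Proposition~\ref{P.obstacle} is also fine.

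\textbf{Possible shortcut.} Your Step~1 can be simplified and Step~2 dropped. Lewy--Stampacchia alone gives $4\pi\lambda\le(3\omega^2-4\pi R_T(U))_+$, which already vanishes on the contact set. More directly, apply Theorem~\ref{prop-EL-minimizer}(iii) to the canonical minimizer at mass $M$. If $\rho_0^s\neq0$, it yields $d\rho_0^s=(\tfrac{3\omega^2}{4\pi}-R_T(0))\mathbf 1_{\mathcal C}\,dx\le0$, an immediate contradiction. The minimizer itself, radial by Proposition~\ref{prop-existence-minimizer}, is then the required normal state.
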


\begin{proof}
Let
\[
f(s)=3\omega^2-4\pi R_T(s).
\]
If $3\omega^2<4\pi R_T(0)$, choose the unique $\alpha>0$ such that $f(\alpha)=0$. In the critical
case $3\omega^2=4\pi R_T(0)$, set $\alpha=0$. For $a>\alpha$, solve
\[
U_a''+\frac2rU_a'=f(U_a),\qquad U_a(0)=a,\qquad U_a'(0)=0.
\]
Since $R_T$ is strictly decreasing, $f$ is strictly increasing, and $f(a)>0$. As long as the solution
exists and $U_a\geq a$, we have
\[
(r^2U_a')'=r^2f(U_a)\geq r^2f(a).
\]
Thus $U_a'(r)\geq f(a)r/3$ and
\[
U_a(r)\geq a+\frac{f(a)}6r^2.
\]
Moreover $f(U)\leq3\omega^2$, so the same integrated identity gives
$U_a'(r)\leq\omega^2r$ and $U_a(r)\leq a+\omega^2r^2/2$ on every interval of existence. This
precludes finite-radius blow-up. In particular $U_a$ is global, remains positive, and tends to
$+\infty$. Hence
\[
\rho_a(r):=R_T(U_a(r))
\]
is integrable and decays exponentially. Since
\[
\Delta U_a=3\omega^2-4\pi R_T(U_a),\qquad \Delta\Phi=3\omega^2,
\]
we have
\[
-\Delta(U_a-\Phi)=4\pi\rho_a.
\]
The radial derivative of $U_a-\Phi$ is $-m_a(r)/r^2$, where
$m_a(r)=4\pi\int_0^r\rho_a(s)s^2\,ds$, and therefore $U_a-\Phi$ has a finite limit at infinity.
Thus
\[
U_a=\Phi+V_a-\mu_a
\]
for the Newton potential $V_a$ of $\rho_a$ and for a suitable constant $\mu_a$. Hence
$(\mu_a,V_a)\in\mathcal N_T$. Moreover, $\rho_a\in L^1\cap L^\infty$, so
$V_a\in\dot H^1$. The quadratic lower bound on $U_a$, together with the exponential decay in
$p$, also shows that the corresponding phase-space Bose density has finite kinetic and external
energies and finite entropy. It therefore belongs to its mass-constrained admissible class.

Set
\[
M(a):=4\pi\int_0^\infty R_T(U_a(r))r^2\,dr.
\]
Continuous dependence for the ODE and dominated convergence on compact intervals, together with the
Gaussian tail obtained above, imply that $M(a)$ is continuous on $(\alpha,\infty)$. Moreover,
$M(a)\to0$ as $a\to\infty$, because the lower bound $U_a(r)\geq a+f(a)r^2/6$ and the exponential
decay of $R_T$ force the density to vanish in $L^1$.

On the other hand, as $a\downarrow\alpha$, continuous dependence gives $U_a\to\alpha$ uniformly on
every bounded interval. Hence for every $R>0$ and every small $\varepsilon>0$, choosing
$a>\alpha$ sufficiently close to $\alpha$ yields
\[
U_a(r)\leq\alpha+\varepsilon,\qquad 0\leq r\leq R.
\]
Therefore
\[
M(a)\geq4\pi\int_0^R R_T(U_a(r))r^2\,dr
\geq \frac{4\pi}{3}R_T(\alpha+\varepsilon)R^3.
\]
Given $L>0$, first choose $R$ and then $\varepsilon$ so that the last lower bound exceeds $L$.
Continuous dependence then shows that the bound holds for every $a>\alpha$ sufficiently close to
$\alpha$. Hence
\[
 \lim_{a\downarrow\alpha}M(a)=\infty.
\]
Together with continuity and $\lim_{a\to\infty}M(a)=0$, this shows by the intermediate value
theorem that the range of $M(a)$ contains $(0,\infty)$. Thus for every prescribed mass $M>0$ there
is $a>\alpha$ such that $M(a)=M$.

The corresponding Bose distribution is a purely absolutely continuous normal state. By Proposition
\ref{prop-normal-minimizer}, it is the unique minimizer at mass $M$. Hence the minimizer has no
singular part.
\end{proof}

\subsection{Proof of the sharp fixed-temperature transition}
\begin{proof}[Proof of Theorem~\ref{thm-M}]
Fix $T>0$ and let
\[
 \mathfrak I_T:=\{M>0:\text{the canonical minimizer at mass $M$ is normal}\}.
\]
The preliminary small-mass construction shows that $\mathfrak I_T$ is nonempty.
Corollary~\ref{cor-normal-monotone} shows that it is an initial interval, and
Lemma~\ref{lem-normal-closed}, applied with fixed temperature, shows that it is closed in
$(0,\infty)$. Therefore there is a unique number $M_\sharp(T)\in(0,\infty]$ such that
\[
 \mathfrak I_T=(0,M_\sharp(T)].
\]
When $M_\sharp(T)=\infty$, the right-hand side is understood as $(0,\infty)$. This proves the exact
normal/condensed alternative in part~(a), including normality at a finite endpoint.

Every $(\mu,V)\in\mathcal N_T$ produces, by Proposition~\ref{prop-normal-minimizer}, a canonical
normal minimizer of mass $M_T(\mu,V)$. Conversely, the Euler--Lagrange equation of every canonical
normal minimizer places its chemical potential and potential in $\mathcal N_T$. Hence
\begin{equation}\label{eq-sharp-mass-normal-capacity}
 M_\sharp(T)=\widehat M^*(T).
\end{equation}

Under superquadratic confinement, Corollary~\ref{cor-superquadratic} and
\eqref{eq-sharp-mass-normal-capacity} give $M_\sharp(T)<\infty$. In the harmonic case satisfying
$3\omega^2>4\pi R_T(0)$, every canonical minimizer is radial by
Theorem~\ref{prop-existence-minimizer}. A normal-branch state is the canonical minimizer at its own
mass, so it is radial as well. Proposition~\ref{prop-harmonic-strong} therefore bounds all masses in
$\mathcal N_T$, and again $M_\sharp(T)<\infty$. This proves part~(b).

If $3\omega^2\leq4\pi R_T(0)$, Proposition~\ref{prop-harmonic-weak} supplies a normal state at every
mass. Thus $M_\sharp(T)=\infty$, proving part~(c).

Finally, Corollary~\ref{cor-normal-monotone} implies
\[
 T_1<T_2\quad\Longrightarrow\quad M_\sharp(T_1)\leq M_\sharp(T_2).
\]
For each fixed $M>0$, Theorem~\ref{thm-T} and the definition of $M_\sharp$ give
\[
 \{T>0:M\leq M_\sharp(T)\}=[T_\sharp(M),\infty).
\]
Taking the minimum proves part~(d).
\end{proof}

\section{Abstract variational stability}\label{sec-dyn-stab}

This section proves Proposition~\ref{thm-dyn-stab}. The point is that the variational problem already
contains the correct Lyapunov functional. Therefore no new coercivity mechanism is needed beyond the
compactness of minimizing sequences and the uniqueness of the minimizer.

\subsection{Formal kinetic motivations and admissible free-energy curves}
We first record the kinetic equations that motivate the abstract curve class used in
Proposition~\ref{thm-dyn-stab}. In all cases the self-consistent potential is given by
\[
 -\Delta V_F=4\pi\rho_F,
 \qquad
 \rho_F(t,x)=\int_{\R^3}F(t,x,p)\,dp
\]
whenever $F$ is absolutely continuous. For measure-valued solutions, $\rho_F$ denotes the spatial
projection of $F$.

The conservative Vlasov--Poisson dynamics with external trap is
\begin{equation}\label{eq-vp}
 \partial_t F+p\cdot\nabla_xF-
 \nabla_x(\Phi+V_F)\cdot\nabla_pF=0,
 \qquad -\Delta V_F=4\pi\rho_F.
\end{equation}
For sufficiently regular solutions, the mass, total energy, and entropy are conserved. Hence
$\mathcal F_T$ is conserved. For weak or measure-valued solutions, Proposition~\ref{thm-dyn-stab}
requires only the corresponding free-energy inequality on the interval of existence.

The spatially inhomogeneous bosonic Boltzmann--Poisson equation is
\begin{equation}\label{eq-boltzmann-poisson}
 \partial_t F+p\cdot\nabla_xF-
 \nabla_x(\Phi+V_F)\cdot\nabla_pF=Q_{\rm BE}(F),
 \qquad -\Delta V_F=4\pi\rho_F.
\end{equation}
For a density $f=f(t,x,p)$, with $(t,x)$ suppressed in the notation below, the Bose--Einstein
collision operator is formally
\begin{align}\label{eq-qbe-collision}
Q_{\rm BE}(f)(p)
&=\int_{\R^3}\int_{\mathbb S^2}B(|p-p_*|,\omega)
\Big[f'f_*'(1+f)(1+f_*) -ff_*(1+f')(1+f_*')\Big]d\omega\,dp_*.
\end{align}  
Here $f=f(p)$, $f_*=f(p_*)$, $f'=f(p')$, $f_*'=f(p_*')$, and
\[
 p'=p-((p-p_*)\cdot\omega)\omega,
 \qquad
 p_*'=p_*+((p-p_*)\cdot\omega)\omega.
\]
The collisions conserve mass, momentum, and kinetic energy and satisfy the bosonic $H$-theorem. Thus,
for solution concepts for which the transport and collision identities are justified, one obtains a
free-energy inequality of the form required in Proposition~\ref{thm-dyn-stab}.

A dissipative alternative is the bosonic Vlasov--Fokker--Planck equation
\begin{equation}\label{eq-vfp}
 \partial_t F+p\cdot\nabla_xF-
 \nabla_x(\Phi+V_F)\cdot\nabla_pF
 =\nabla_p\cdot\big(T\nabla_pF+pF(1+F)\big),
 \qquad -\Delta V_F=4\pi\rho_F.
\end{equation}
For smooth positive densities, this equation is the Kramers-type gradient flow of the same free
energy in the momentum variable. Formally,
\begin{equation}\label{eq-vfp-diss}
 \frac{d}{dt}\mathcal F_T(F(t))
 =-\iint_{\R^6}
 \frac{|T\nabla_pF+pF(1+F)|^2}{F(1+F)}\,dpdx\leq0.
\end{equation}
This again gives the abstract free-energy inequality whenever the formula can be justified by the
chosen weak solution theory.

We emphasize that the stability result does not depend on a specific well-posedness
framework for \eqref{eq-vp}, \eqref{eq-boltzmann-poisson}, or \eqref{eq-vfp}. It is conditional on the
existence of a measure-valued curve in $\mathcal A$ satisfying the fixed mass constraint and the
free-energy inequality; the displayed nonlinear expressions are not used to define those operations
on arbitrary measures.

\subsection{Relative free energy}
Let $F_0$ be the minimizer at mass $M$, and let
\[
 u_0(x)=\Phi(x)+\widetilde V_{F_0}(x)+c
\]
be given by Theorem~\ref{prop-EL-minimizer}. Recall from
\eqref{jdef} that $j'(s)=\log\frac{s}{1+s}$ for $s>0$.
The Euler--Lagrange formula for the absolutely continuous part is equivalent to
\begin{equation}\label{eq-EL-jprime}
 \frac{|p|^2}{2}+u_0(x)+Tj'(F_0^a(x,p))=0
 \qquad\text{for }dpdx\text{-a.e. }(x,p).
\end{equation}
Moreover,
\begin{equation}\label{eq-singular-contact-dyn}
 \frac{|p|^2}{2}+u_0(x)=0
 \qquad F_0^s\text{-a.e.}
\end{equation}
Indeed, Theorem~\ref{prop-EL-minimizer} gives $F_0^s=\rho_0^s\otimes\delta_{p=0}$ and
$u_0=0$ $\rho_0^s$-a.e.

\begin{lemma}[Relative free-energy identity]\label{lem-relative-free-energy}
Let $F\in\mathcal A$ have the same mass as $F_0$. Then
\begin{equation}\label{eq-relative-identity}
 \mathcal F_T(F)-\mathcal F_T(F_0)=\mathcal D_T(F\mid F_0),
\end{equation}
where $\mathcal D_T$ is defined in \eqref{rel-distance-main}. In particular,
$\mathcal D_T(F\mid F_0)\geq0$.
\end{lemma}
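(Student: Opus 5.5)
The plan is to expand $\mathcal F_T(F)-\mathcal F_T(F_0)$ term by term and then use the Euler--Lagrange relations \eqref{eq-EL-jprime} and \eqref{eq-singular-contact-dyn} to recombine the linear terms into $\mathcal D_T$. This follows the computation in Lemma~\ref{lem-expansion} and Proposition~\ref{prop-normal-minimizer}, now with a possibly nonzero reference singular part.

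\emph{Coulomb term.} I will use the polarization identity
\[
 \frac1{8\pi}\|\nabla V_F\|_2^2-\frac1{8\pi}\|\nabla V_{F_0}\|_2^2
 =\frac1{4\pi}\int\nabla V_{F_0}\cdot\nabla(V_F-V_{F_0})\,dx+\frac1{8\pi}\|\nabla(V_F-V_{F_0})\|_2^2 .
\]
By Proposition~\ref{prop-quasicont}, applied separately to $F$ and to $F_0$ with the quasi-continuous test function $\widetilde V_{F_0}$, the cross term equals $\int\widetilde V_{F_0}\,d\rho_F-\int\widetilde V_{F_0}\,d\rho_{F_0}$. Both integrals are finite, since finite-energy measures integrate $\dot H^1$ functions (see the $L^1(d\rho)$ bound in the proof of Proposition~\ref{prop-quasicont}).

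\emph{Linear terms.} Adding the kinetic and external terms, and inserting $c\,(F(\R^6)-F_0(\R^6))=0$ (equal masses), the linear part becomes
\[
 \iint\Bigl(\frac{|p|^2}{2}+u_0\Bigr)\,d(F-F_0).
\]
I split this integral into absolutely continuous and singular parts. The singular part of $F_0$ contributes zero by \eqref{eq-singular-contact-dyn}, and the singular part of $F$ gives exactly the second term of $\mathcal D_T$. On the absolutely continuous parts, \eqref{eq-EL-jprime} replaces $\frac{|p|^2}{2}+u_0$ by $-Tj'(F_0^a)$. Combined with $T\iint\bigl(j(F^a)-j(F_0^a)\bigr)$, this produces $T\iint\mathcal J(F^a\mid F_0^a)$. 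Here $F_0^a>0$ a.e., as shown in the proof of Theorem~\ref{prop-EL-minimizer}.

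\emph{Main obstacle: integrability.} The step I expect to be hardest is justifying the splitting. Individual pieces such as $\iint j'(F_0^a)F^a$ may fail to be absolutely integrable separately, and $\mathcal J\geq0$ could be $+\infty$ only if the left side is. I would argue as follows.
\begin{itemize}
\item $\frac{|p|^2}{2}+\Phi$ is integrable against $F$ and $F_0$ by the moment bound and boundedness of $\Phi_-$.
\item $\widetilde V_{F_0}$ is integrable against $\rho_F$ and $\rho_{F_0}$ as above, so $u_0\in L^1(dF)\cap L^1(dF_0)$ and the linear integrals are finite.
\item $j(F^a)$ and $j(F_0^a)$ are integrable by Lemma~\ref{lem-decay}.
\item $j'(F_0^a)F^a=-T^{-1}\bigl(\frac{|p|^2}{2}+u_0\bigr)F^a$ a.e., which is integrable by the first two points.
\end{itemize}
Hence every term in the rearrangement is finite and the algebra is legitimate, giving \eqref{eq-relative-identity}.

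\emph{Nonnegativity.} $\mathcal D_T\geq0$ holds term by term. First, $\mathcal J\geq0$ by convexity of $j$. Second, $\frac{|p|^2}{2}+u_0\geq0$ holds $F^s$-a.e., because $u_0\geq0$ q.e.\ and $\rho_{F^s}\leq\rho_F$ charges no capacity-zero set. Third, the Coulomb term is a square.
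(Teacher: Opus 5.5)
Your proposal is correct and follows the paper's proof essentially step for step. The steps are: adding $c$ via equal masses, the Coulomb polarization with the cross term identified through Proposition~\ref{prop-quasicont}, the split into absolutely continuous and singular parts using \eqref{eq-EL-jprime} and \eqref{eq-singular-contact-dyn}, and term-by-term nonnegativity. Your integrability checks are a useful addition that the paper leaves implicit: finiteness of $\int\widetilde V_{F_0}\,d\rho_F$, of the entropy integrals via Lemma~\ref{lem-decay}, and of $j'(F_0^a)F^a$ through the Euler--Lagrange identity.
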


\begin{proof}
Since $F$ and $F_0$ have the same total mass, adding the constant $c$ to the one-particle
energy does not change the linear part:
\[
 \iint_{\R^6}\left(\frac{|p|^2}{2}+\Phi\right)d(F-F_0)
 =\iint_{\R^6}\left(\frac{|p|^2}{2}+\Phi+c\right)d(F-F_0).
\]
The Coulomb part expands as
\begin{align*}
&\frac1{8\pi}\int |\nabla V_F|^2dx-
 \frac1{8\pi}\int |\nabla V_{F_0}|^2dx  \\
&\qquad =\frac1{4\pi}\int \nabla V_{F_0}\cdot\nabla(V_F-V_{F_0})\,dx
 +\frac1{8\pi}\int |\nabla(V_F-V_{F_0})|^2dx.
\end{align*}
By Proposition~\ref{prop-quasicont}, the cross term equals
\[
 \int_{\R^3}\widetilde V_{F_0}\,d(\rho_F-\rho_{F_0}).
\]
Therefore
\begin{align}\label{eq-free-energy-expanded}
\mathcal F_T(F)-\mathcal F_T(F_0)
&=\iint_{\R^6}\left(\frac{|p|^2}{2}+u_0(x)\right)d(F-F_0) \notag\\
&\quad +T\iint_{\R^6}\big(j(F^a)-j(F_0^a)\big)\,dpdx
 +\frac1{8\pi}\int |\nabla(V_F-V_{F_0})|^2dx.
\end{align}
We now split the first integral into the absolutely continuous and singular parts. On the absolutely
continuous part, \eqref{eq-EL-jprime} gives
\begin{align*}
&\iint\left(\frac{|p|^2}{2}+u_0\right)(F^a-F_0^a)\,dpdx
 +T\iint\big(j(F^a)-j(F_0^a)\big)\,dpdx \\
&\qquad =T\iint\big(j(F^a)-j(F_0^a)-j'(F_0^a)(F^a-F_0^a)\big)\,dpdx.
\end{align*}
On the singular part, \eqref{eq-singular-contact-dyn} implies
\[
 \iint\left(\frac{|p|^2}{2}+u_0\right)dF_0^s=0.
\]
Thus the remaining singular contribution is
\[
 \iint\left(\frac{|p|^2}{2}+u_0\right)dF^s.
\]
Substituting these identities into \eqref{eq-free-energy-expanded} proves \eqref{eq-relative-identity}.
Finally, $j$ is convex, so the Bregman term is nonnegative. The singular weight is nonnegative
q.e. by Theorem~\ref{prop-EL-minimizer}, and it is integrated against a spatial finite-energy
measure, which does not charge the exceptional set. The squared field term is nonnegative. Hence
$\mathcal D_T(F\mid F_0)\geq0$.
\end{proof}

\subsection{Proof of conditional variational stability}
\begin{proof}[Proof of Proposition~\ref{thm-dyn-stab}]
By Lemma~\ref{lem-relative-free-energy},
\[
 \mathcal D_T(F(t)\mid F_0)=\mathcal F_T(F(t))-\mathcal F_T(F_0).
\]
The free-energy inequality for the admissible curve gives
\[
 \mathcal F_T(F(t))-\mathcal F_T(F_0)
 \leq \mathcal F_T(F(0))-\mathcal F_T(F_0)
 =\mathcal D_T(F(0)\mid F_0),
\]
and the relative-functional stability follows.

It remains to justify the weak stability statement. We first record the compactness consequence of
the variational argument. If $F_n\in\mathcal A$ has mass $M$ and
\[
 \mathcal D_T(F_n\mid F_0)\to0,
\]
then, by Lemma~\ref{lem-relative-free-energy},
\[
 \mathcal F_T(F_n)\to\mathcal F_T(F_0)=\mathcal F_{\min}(T).
\]
Thus $\{F_n\}$ is a minimizing sequence. The compactness part in the proof of
Proposition~\ref{prop-existence-minimizer} gives, after passing to subsequences, weak-* convergence
in $\mathcal M(\R^6)$ to a minimizer. Since the minimizer is unique, every subsequential limit is
$F_0$, and hence
\[
 F_n\stackrel{*}{\rightharpoonup}F_0
 \qquad\text{in }\mathcal M(\R^6).
\]
Moreover, the field part in \eqref{rel-distance-main} gives directly
\[
 \|\nabla(V_{F_n}-V_{F_0})\|_{L^2}\to0.
\]

Now suppose the asserted weak stability fails. Then there exist a weak-* neighborhood $\mathcal U$ of
$F_0$, a number $\eta>0$, admissible free-energy curves $F_n(t)$ on intervals $[0,\tau_n]$, and
times $t_n\in[0,\tau_n]$ such that
\[
 \mathcal D_T(F_n(0)\mid F_0)\to0,
\]
but either $F_n(t_n)\notin\mathcal U$ or
\[
 \|\nabla(V_{F_n(t_n)}-V_{F_0})\|_{L^2}\geq\eta.
\]
The first part of the proposition gives
\[
 \mathcal D_T(F_n(t_n)\mid F_0)
 \leq \mathcal D_T(F_n(0)\mid F_0)\to0.
\]
Applying the compactness consequence to the sequence $F_n(t_n)$ yields
$F_n(t_n)\stackrel{*}{\rightharpoonup}F_0$ and
$\|\nabla(V_{F_n(t_n)}-V_{F_0})\|_2\to0$, a contradiction. This proves the proposition.
\end{proof}

\section*{Data availability statement}
No datasets were generated or analysed during the current study.

\section*{Conflict of interest}
The authors declare that they have no conflict of interest. 
The authors also declare that this manuscript has not been previously published.

\section*{Acknowledgment}
The work of G.-C. Bae is supported by the National Research Foundation of Korea (NRF) grant funded by the Korea government (MSIT) (No. 2021R1C1C2094843).
The work of J. Seok is supported by the National Research Foundation of Korea (NRF) grant funded by the Korea government (MSIT) (No. RS-2025-00562375).

\appendix 
\section{Examples of condensate geometry under radial confinement}

In this section, we consider two explicit examples of confining potentials, $\Phi(x)=\Phi(0)+\frac{\omega^2}{2}|x|^2$ and $\Phi(x)=|x|^4$.
In the absence of repulsive interaction, the condensate for these potentials is concentrated at the unique minimum of $\Phi$, namely at
$\delta_{x=0}\otimes\delta_{p=0}$. In contrast, the repulsive interaction
spreads the condensed spatial mass over a ball for
$\Phi(x)=\Phi(0)+\frac{\omega^2}{2}|x|^2$, and over a spherical shell for
$\Phi(x)=|x|^4$. These examples provide a more explicit illustration of
Theorem~\ref{prop-EL-minimizer}.
We define the density of the thermal cloud by
\[
R_T(u_0):=
\int_{\R^3}
\frac{1}{
	\exp\left(\frac{1}{T}\left(\frac{|p|^2}{2}+u_0\right)\right)-1
}\,dp,
\qquad u_0:=\Phi+V_{F_0}+c\ge0.
\]

\begin{lemma}\label{L.A1}
For the following confining potential 
\[
\Phi(x)=\Phi(0)+\frac{\omega^2}{2}|x|^2, \qquad 3\omega^2>4\pi R_T(0),
\]
let $F_0$ be the minimizer at temperature $T>0$ and mass $M>0$.
Assume in addition that $\rho_0^s\neq0$.
Then there exists $R_*>0$ such that
\[
\mathcal C:=\{x\in\R^3:u_0(x)=0\} = \overline{B_{R_*}(0)}.
\]
Moreover,
\[
d\rho_0^s(x)
=
\left(
\frac{3\omega^2}{4\pi}-R_T(0)
\right)
\mathbf 1_{\{|x|\le R_*\}} dx.
\]
Writing $u_0(x)=u(r)$ with $r=|x|$, the contact radius $R_*$ is
characterized by
\[
\begin{cases}
\displaystyle
u''(r)+\frac{2}{r}u'(r)
=
3\omega^2-4\pi R_T(u(r)),
& r>R_*,
\\[2mm]
u(R_*)=0,
\qquad
u'(R_*)=0,
\\[2mm]
\displaystyle
M
=
\omega^2R_*^3
+
4\pi\int_{R_*}^{\infty}
R_T(u(r))\,r^2\,dr .
\end{cases}
\]
\end{lemma}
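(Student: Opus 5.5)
We will work with the radial reduction, since $\Phi$ is radial and hence $F_0$, $V_{F_0}$ and $u_0$ are radial by Proposition~\ref{prop-existence-minimizer}. Since $\Phi\in C^2$ and $\rho_0^s\neq0$, Theorem~\ref{prop-EL-minimizer}(iii) applies. It gives $u_0=u(|x|)\in C^{1,1}_{\rm loc}$, $u\ge0$, and the density $d\rho_0^s=(\frac{3\omega^2}{4\pi}-R_T(0))\mathbf 1_{\mathcal C}\,dx$, because $\Delta\Phi=3\omega^2$ and $\rho_0^a=R_T(0)$ on $\mathcal C$. The hypothesis $3\omega^2>4\pi R_T(0)$ makes this density a positive constant $\kappa:=\frac{3\omega^2}{4\pi}-R_T(0)$ on $\mathcal C$. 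So the density formula is immediate once $\mathcal C=\overline{B_{R_*}}$ is established, and the main task is the geometry of the contact set.

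For the geometry we use the radial identity from the proof of Proposition~\ref{prop-harmonic-strong}, now including the condensate:
\[
 (r^2u'(r))'=r^2\bigl(3\omega^2-4\pi R_T(u)-4\pi\kappa\mathbf 1_{\mathcal C}\bigr)\quad\text{a.e.}
\]
On $\mathcal C$ the right-hand side vanishes. Off $\mathcal C$ it is at least $r^2(3\omega^2-4\pi R_T(0))=4\pi\kappa r^2>0$. Thus $r^2u'$ is nondecreasing, and strictly increasing on any interval where $u>0$. Regularity at the origin gives $r^2u'(r)\to0$ as $r\to0$, so $u'\ge0$ and $u$ is nondecreasing.

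Set $R_*:=\sup\{r:u(r)=0\}$. This set is nonempty because $\mathcal C\ne\varnothing$, and $R_*$ is finite because $\mathcal C$ is compact. Monotonicity and $u\ge0$ then give $u=0$ on $[0,R_*]$, so $\mathcal C=\overline{B_{R_*}}$. Moreover $R_*>0$, since $\operatorname{int}\mathcal C\ne\varnothing$ by Theorem~\ref{prop-EL-minimizer}(iii). The step needing the most care is whether the contact set could be $\{r\le R_*\}$ with gaps, i.e.\ a shell-like configuration. Monotonicity of $u$ excludes this directly: a zero at $R_*$ forces $u\equiv0$ before $R_*$. The monotonicity itself rests on $r^2u'(r)\to0$ at the origin, which holds because $u\in C^{1,1}$ near $0$.

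The characterization of $R_*$ then follows. For $r>R_*$ we have $u>0$, so $\rho_0^s=0$ there and $u''+\frac2ru'=3\omega^2-4\pi R_T(u)$, which is the radial form of $\Delta u_0=\Delta\Phi-4\pi\rho_0^a$. Continuity of $u$ and $u'$ (from $C^{1,1}$) gives $u(R_*)=0$ and $u'(R_*)=0$, the latter because $u$ is constant on $[0,R_*]$. For the mass, split
\[
 M=\rho_{F_0}(\R^3)=\bigl(R_T(0)+\kappa\bigr)\tfrac{4\pi}{3}R_*^3+4\pi\int_{R_*}^\infty R_T(u)r^2\,dr .
\]
Since $R_T(0)+\kappa=\frac{3\omega^2}{4\pi}$, the first term equals $\omega^2R_*^3$, as claimed.
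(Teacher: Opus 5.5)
Your proof is correct and follows the paper's route. Both use the density formula from Theorem~\ref{prop-EL-minimizer}(iii), the radial identity for $(r^2u')'$ with its positive sign off $\mathcal C$, regularity at the origin, and the same mass splitting. The one difference is organizational: you use the identity a.e.\ on all of $(0,\infty)$, where its right-hand side vanishes on $\mathcal C$, and so get global monotonicity of $u$ in a single step. This replaces the paper's two steps ($0\in\mathcal C$ by contradiction, then no return after $R_*$). It also makes the ``no return'' step fully rigorous, since the paper integrates over $[R_*,r]$ an inequality it had stated only on $\{u>0\}$.
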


\begin{proof}
Since $\Phi$ is radial, uniqueness of the minimizer implies that
$F_0$, $V_{F_0}$, and $u_0$ are radial. By
Theorem~\ref{prop-EL-minimizer},
\[
d\rho_0^s(x)
=
\left(
\frac{3\omega^2}{4\pi}-R_T(0)
\right)\mathbf 1_{\mathcal C}(x)\,dx,
\]
where we used $\Delta\Phi=3\omega^2$ and
$\rho_0^a=R_T(u_0)$. 

We next determine the shape of the contact set. We claim that the contact set is necessarily a ball centered at the origin. On the non-contact set $\{u_0>0\}$, the singular part vanishes and $\rho_{F_0}=R_T(u_0)$. 
Using $-\Delta V_{F_0}=4\pi\rho_{F_0}$, we obtain
\[
\Delta u_0 = \Delta \Phi - 4\pi \rho_{F_0}
=
3\omega^2-4\pi R_T(u_0)\qquad\text{for a.e. }r\in\{u_0>0\}.
\]
Writing $u_0(x)=u(r)$ and using $\Delta u_0= u''(r)+\frac2r u'(r)$, we get
\begin{align}\label{r2u'}
(r^2u'(r))'
=
r^2\bigl(3\omega^2-4\pi R_T(u(r))\bigr) \geq r^2(3\omega^2-4\pi R_T(0)) >0\qquad\text{for a.e. }r\in\{u>0\}
\end{align}
on every connected component of $\{u>0\}$, where we used the fact that $R_T(u)$ is decreasing in $u$ on $[0,\infty)$.

Since $\rho_0^s\neq0$ by assumption, the contact set is nonempty. We first claim that the origin belongs to the contact set, i.e., $0 \in \mathcal{C}$. Suppose, on the contrary, that the contact set has a positive inner radius $R_1>0$.
Then $u_0>0$ on $(0,R_1)$. Moreover, by Theorem \ref{prop-EL-minimizer}, $u_0 \in C^{1,1}_{loc}(\R^3)$, and since $u_0$ is radial, differentiability at the origin implies $u'(0)=0$. Integrating \eqref{r2u'} from $0$ to $r$ with $r< R_1$ implies that $u'(r) >0$, so that $u(r)$ is strictly increasing for $0\leq r< R_1$. Hence, $u(R_1)> u(0) >0$, which contradicts $u(R_1)=0$. Thus, the contact set must contain the origin.

Starting from $r=0$, let $R_*:=\sup\{r\geq0:\ u(s)=0\ \text{for every }0\leq s\leq r\}$ denote its outer radius. Since $u_0\in C^{1,1}_{loc}$ and
$u_0\equiv0$ on the contact region, we have $u_0(R_*)=u_0'(R_*)=0$.
For $r>R_*$, the same inequality \eqref{r2u'} yields that $u_0'(r)>0$ for $r>R_*$, so that $u(r)$ is strictly increasing. Hence, $u_0$ can never return to the
contact set $u(r)=0$. Therefore, $\mathcal C=\overline{B_{R_*}(0)}$.

Finally, on the contact set $0\leq r\leq R_*$ and outside the contact set $r\geq R_*$, we have
\begin{align*}
\begin{cases}
&\rho_0^a+\rho_0^s
=
R_T(0)
+
\frac{3\omega^2}{4\pi}-R_T(0)
=
\frac{3\omega^2}{4\pi}, \quad \mbox{on} \quad 0\leq r\leq R_*, \\
&\rho_{F_0}=R_T(u(r)), \quad \hspace{3.5cm} \mbox{on} \quad r\geq R_*.
\end{cases}
\end{align*}
Hence, the mass constraint yields
\[
\begin{aligned}
M
&=
4\pi\int_0^{R_*}
\frac{3\omega^2}{4\pi}r^2\,dr
+
4\pi\int_{R_*}^{\infty}
R_T(u(r))r^2\,dr
=
\omega^2R_*^3
+
4\pi\int_{R_*}^{\infty}
R_T(u(r))r^2\,dr.
\end{aligned}
\]
This completes the proof.
\end{proof}

\begin{lemma}\label{L.A2}
Let $\Phi(x)=|x|^4$, and let $F_0$ be the minimizer at temperature $T>0$ and mass $M>0$. Assume that $\rho_0^s\neq0$. Then, for $r_c:=\sqrt{\frac{\pi}{5}R_T(0)}$, there exist $0<r_c \leq R_1<R_2<\infty$ such that
\[
\operatorname{supp}\rho_0^s
=
\left\{x\in\R^3:R_1\le |x|\le R_2\right\}.
\]
Moreover,
\[
d\rho_0^s(x)
=
\left(
\frac{5}{\pi}|x|^2-R_T(0)
\right)
\mathbf 1_{\{R_1\le |x|\le R_2\}}\,dx.
\]
Furthermore, the contact radii $R_1,R_2$ and the radial profiles
$u_-,u_+$ satisfy
\[
\begin{cases}
\displaystyle
u_-''+\frac{2}{r}u_-'
=
20r^2-4\pi R_T(u_-), \quad u_-'(0)=u_-(R_1)=u_-'(R_1)=0, 
\quad 0<r<R_1,
\\[2mm]
\displaystyle
u_+''+\frac{2}{r}u_+'
=
20r^2-4\pi R_T(u_+), \quad u_+(R_2)=u_+'(R_2)=0,
\quad r>R_2, \\[2mm]
M
=
4R_2^5
+
4\pi\int_{R_2}^{\infty}
R_T(u_+(r))\,r^2\,dr.
\end{cases}
\]
\end{lemma}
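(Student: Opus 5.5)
We follow the proof of Lemma~\ref{L.A1}, but the quartic trap gives a sign change in the radial inequality, and that is what produces a shell. First, by radiality (Proposition~\ref{prop-existence-minimizer}) $u_0(x)=u(r)$ is radial. Theorem~\ref{prop-EL-minimizer}(iii) applies because $\Phi\in C^2$, and it gives $u\in C^{1,1}_{\rm loc}$. Since $\Delta\Phi=20|x|^2$, the same theorem gives
\[
d\rho_0^s=\Bigl(\tfrac{5}{\pi}|x|^2-R_T(0)\Bigr)\mathbf 1_{\mathcal C}\,dx .
\]
Positivity of $\rho_0^s$ forces $\frac5\pi r^2\ge R_T(0)$ a.e.\ on $\mathcal C$. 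Hence the radii of $\mathcal C$ where the density is nonzero satisfy $r\ge\sqrt{\pi R_T(0)/5}=r_c$. On $\{u>0\}$ we have $(r^2u')'=r^2\bigl(20r^2-4\pi R_T(u)\bigr)$.

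Next we show that $\mathcal C$ contains no ball $\{r<r_c\}$ interior contact of positive measure. On the part of $\mathcal C$ inside $r<r_c$, we have $D^2u=0$ a.e., so $0=\Delta u=20r^2-4\pi R_T(0)-4\pi\,d\rho_0^s/dx<0$. This is impossible on a set of positive measure. So $\mathcal C\cap\{r<r_c\}$ is Lebesgue-null.

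Set $R_1:=\inf\operatorname{supp}\rho_0^s$ and $R_2:=\sup\operatorname{supp}\rho_0^s$. Compactness of the support (Theorem~\ref{prop-EL-minimizer}(ii)) gives $R_2<\infty$, and the previous step gives $R_1\ge r_c>0$. For $r>R_2$, which lies beyond $r_c$, we have $20r^2-4\pi R_T(u)\ge 20r^2-4\pi R_T(0)>0$. Since $u(R_2)=u'(R_2)=0$ by $C^1$ contact, integrating $(r^2u')'>0$ gives $u'>0$ and $u>0$ for $r>R_2$. This identifies the outer profile $u_+$ together with its boundary conditions.

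Between $R_1$ and $R_2$ we must exclude gaps, i.e.\ an interval $(a,b)$ with $r_c\le a<b$, $u(a)=u(b)=0$, $u'(a)=0$ and $u>0$ inside. On that interval the right side of $(r^2u')'=r^2(20r^2-4\pi R_T(u))$ is positive because $r>r_c$. Integrating from $a$ gives $u'>0$ on $(a,b)$, which contradicts $u(b)=0$. So the support is the full shell. Here the density $\frac5\pi r^2-R_T(0)$ is positive for $r>r_c$, so the shell equals the support, the annulus $R_1\le |x|\le R_2$. We also need $R_1<R_2$: a single sphere carries zero Lebesgue mass, and $\rho_0^s\neq0$ is absolutely continuous.

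For the inner region $0<r<R_1$ we have $u>0$ on an open set by definition of $R_1$. Indeed, any zero of $u$ there would lie in $\mathcal C$; a nondegenerate one would force contact mass, while an isolated zero is handled by the same $C^{1,1}$ ODE argument. We therefore call this inner profile $u_-$, and it satisfies the ODE with $u_-'(0)=0$ by radial smoothness and $u_-(R_1)=u_-'(R_1)=0$.

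The step I expect to be the main obstacle is showing $R_1>0$ rigorously, i.e.\ excluding contact near the origin, together with the possibility of isolated zeros of $u$ in $(0,R_1)$ that carry no mass. The plan for the origin is to note that $\mathcal C\cap\{r<r_c\}$ is null, so $u>0$ a.e.\ there. If $u(r_0)=0$ at some $r_0<r_c$, then $u'(r_0)=0$ and $u\ge0$ nearby. But near a point where $20r^2<4\pi R_T(0)$, the equation gives $(r^2u')'<0$ in a neighborhood, because $R_T(u)\to R_T(0)$ as $u\to0$ by continuity. This makes $r_0$ a strict local maximum with value $0$, contradicting $u>0$ nearby. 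So $u>0$ on $[0,r_c)$, hence $R_1\ge r_c$.

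Finally, the mass identity follows from adding the densities. On the shell $\rho_{F_0}=\frac{5}{\pi}r^2$, so it contributes $4\pi\int_{R_1}^{R_2}\frac5\pi r^4\,dr=4(R_2^5-R_1^5)$. Inside $r<R_1$, Gauss's law gives $4\pi\int_0^{R_1}R_T(u_-)r^2\,dr=m(R_1)$. Since $u'(R_1)=0$ and $u'=\Phi'-m/r^2$, we get $m(R_1)=R_1^2\Phi'(R_1)=4R_1^5$. The total is therefore $4R_2^5+4\pi\int_{R_2}^\infty R_T(u_+)r^2\,dr$.
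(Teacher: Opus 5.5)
Your proof is correct and follows essentially the paper's route: the contact density from Theorem~\ref{prop-EL-minimizer}(iii), the sign change at $r_c$ of the right-hand side of $(r^2u')'=r^2(20r^2-4\pi R_T(u))$, smooth fit at zeros of $u$, the no-return argument beyond $R_2$, and Gauss's law for the mass identity; the only difference is that you define $R_1,R_2$ through $\operatorname{supp}\rho_0^s$ rather than through the zeros of $u$, which costs you the extra no-gap argument (the same mechanism). Because you first show that $\mathcal C\cap\{r<r_c\}$ is Lebesgue-null, the radial equation holds a.e.\ on all of $(0,r_c)$, so your local-maximum argument also rules out a zero at $r_0=0$, a case that the paper's left-neighborhood argument for $R_1\ge r_c$ does not cover.
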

\begin{proof}
Since $\Phi$ is radial, the uniqueness of the minimizer implies that
$F_0$, $V_{F_0}$, and $u_0$ are radial. We write $u_0(x)=u(r)$, where $r=|x|$. Since $\Delta\Phi(x)=20|x|^2$, Theorem~\ref{prop-EL-minimizer} gives
\[
d\rho_0^s(x)
=
\left(
\frac{1}{4\pi}\Delta\Phi(x)-R_T(0)
\right)
\mathbf 1_{\mathcal C}(x)\,dx
=
\left(
\frac{5}{\pi}|x|^2-R_T(0)
\right)
\mathbf 1_{\mathcal C}(x)\,dx.
\]
Since $\rho_0^s$ is a nonnegative measure, its support must be contained in
\[
\operatorname{supp}\rho_0^s
\subset
\{|x|\ge r_c\},
\qquad
r_c:=\sqrt{\frac{\pi}{5}R_T(0)}.
\]
Since $\rho_0^s\neq0$, Theorem~\ref{prop-EL-minimizer} implies that the contact set has nonempty interior. Let 
\[
R_1 := \inf \{r\geq0: u(r)=0\}, \quad R_2:=\sup\left\{r\ge R_1:u(s)=0\quad\text{for every } s\in [R_1,r]\right\}.
\]
As in the proof of Lemma~\ref{L.A1}, on the non-contact set we have
\[
(r^2u'(r))'
=
r^2\bigl(20r^2-4\pi R_T(u(r))\bigr)\qquad\text{for a.e. }r\in\{u>0\}.
\]

We first show that $R_1\ge r_c$. Suppose, to the contrary, that
$R_1<r_c$. Since $u(R_1)=0$, we have
\[
0=20r_c^2 - 4\pi R_T(0) > 20R_1^2 -4\pi R_T(0) = 20R_1^2 -4\pi R_T(u(R_1)).
\]
Since both $u$ and $R_T$ are continuous, there exists
$\delta>0$ such that $20r^2-4\pi R_T(u(r))<0$ for $R_1-\delta<r<R_1$. Hence,
\[
(r^2u'(r))' = r^2\bigl(20r^2-4\pi R_T(u(r))\bigr) <0, \quad \mbox{for a.e. } r\in (R_1-\delta,R_1).
\]
Since $u'(R_1)=0$, integrating from
$r$ to $R_1$ yields $u'(r)>0$. Therefore,
\[
u(R_1)-u(r)=\int_r^{R_1}u'(s)\,ds>0,
\]
which implies $u(r)<0$, contradicting $u\ge0$. Thus, $R_1\ge r_c$. In addition, since $u\in C_{\rm loc}^{1,1}$ and $u\equiv0$ on
$[R_1,R_2]$ by the definition of $R_2$, we have $u(R_2)=u'(R_2)=0$.

We next claim that $u$ cannot return to the obstacle after leaving the
contact set at $R_2$. Indeed, on the non-contact set $\{u>0\}$, as above,
\[
(r^2u'(r))'
=
r^2\bigl(20r^2-4\pi R_T(u(r))\bigr)\qquad\text{for a.e. }r\in\{u>0\}.
\]
Since $u'(R_2)=0$, integrating over $[R_2,r]$ for $r>R_2$ gives 
\[
\begin{aligned}
	r^2u'(r)
	&=
	\int_{R_2}^r
	s^2\bigl(20s^2-4\pi R_T(u(s))\bigr)\,ds > \int_{R_2}^r
	s^2\bigl(20r_c^2-4\pi R_T(0)\bigr)\,ds
	=0,
\end{aligned}
\]
for $r>R_2\ge R_1\ge r_c$. Consequently, $u'(r)>0$.
Thus, once $u$ leaves the obstacle at $R_2$, it is strictly
increasing and can never return to zero. Hence, no additional
contact component, and therefore no additional component of
$\operatorname{supp}\rho_0^s$, can occur outside $R_2$. Since $R_1$ is the first contact radius, it follows that the condensed part is supported on a
single spherical shell, $\operatorname{supp}\rho_0^s=\{x\in\R^3:R_1\le |x|\le R_2\}$.

We now derive the corresponding relations for the radial profiles. On the non-contact set $\{u>0\}$, the equation
\[
u''+\frac{2}{r}u'
=
20r^2-4\pi R_T(u), \qquad\text{a.e. on }(0,R_1)\cup(R_2,\infty),
\]
gives the equations for $u_-$ and $u_+$. Since $u_0\in C_{\mathrm{loc}}^{1,1}(\R^3)$ is radial,
differentiability at the origin yields $u_-'(0)=0$.
Furthermore, $u=0$ on the contact shell $[R_1,R_2]$ and
$u\in C^1$, so the smooth-fit conditions at the free boundaries are $u_-(R_1)=u_-'(R_1)=0$ and $u_+(R_2)=u_+'(R_2)=0$.

It remains to derive the mass constraint. Since the total spatial density on the shell is $\rho_{F_0}(r)=\rho_0^a(r)+\rho_0^s(r)=\frac{5}{\pi}r^2$, the mass constraint is equivalent to 
\begin{align}\label{M=}
	M= 4\pi \int_0^{R_1} R_T(u_-(r))r^2dr + 4\pi \int_{R_1}^{R_2} \frac{5}{\pi}r^4 dr + 4\pi \int_{R_2}^{\infty} R_T(u_+(r))r^2dr.
\end{align}
Using the properties of the radial Newtonian potential, we can simplify the first integral. We define
\[
m(r):=
4\pi\int_0^r \rho_{F_0}(s)s^2\,ds.
\]
For the radial Newtonian potential $V_{F_0}(x):=\int_{\mathbb R^3}\frac{1}{|x-y|}\,d\rho_{F_0}(y)$, we have
\[
V_{F_0}(r)
=
\frac{m(r)-m(0)}{r}
+
\int_{\{|y|>r\}}\frac{1}{|y|}\,d\rho_{F_0}(y), \qquad V_{F_0}'(r)=-\frac{m(r)}{r^2}.
\]
Since $u(r) = r^4 +V_{F_0}(r)+c$ and $u'(r)=4r^3+V_{F_0}'(r)$, we have
\[ 
u'(r)
=
4r^3-\frac{m(r)}{r^2}.
\]
Substituting the condition $u'(R_1)=0$ gives $m(R_1)=4R_1^5$, which is equivalent to $4\pi\int_0^{R_1}
R_T(u_-(r))r^2\,dr
=
4R_1^5$. Combining this with the identity $4\pi \int_{R_1}^{R_2} \frac{5}{\pi}r^4 dr = 4R_2^5-4R_1^5$ in \eqref{M=} gives the desired mass constraint.
\end{proof}

\end{document}